\documentclass[11pt]{amsart}
\usepackage[margin=1.15in]{geometry}
\usepackage{amsmath,amssymb,amsthm,mathtools}
\usepackage{enumitem}
\usepackage{xcolor}
\usepackage{tikz}
\usepackage[nocompress]{cite}
\usepackage[colorlinks=true,linkcolor=blue!50!black,citecolor=green!40!black,urlcolor=blue!50!black]{hyperref}
\usetikzlibrary{svg.path}

\newtheorem{theorem}{Theorem}[section]
\newtheorem{proposition}[theorem]{Proposition}
\newtheorem{lemma}[theorem]{Lemma}

\theoremstyle{definition}
\newtheorem{definition}[theorem]{Definition}
\theoremstyle{remark}
\newtheorem{remark}[theorem]{Remark}
\numberwithin{equation}{section}

\newcommand{\R}{\mathbb{R}}
\newcommand{\C}{\mathbb{C}}
\newcommand{\Ball}{\mathbb{B}^3}
\newcommand{\Sph}{\mathbb{S}^2}
\newcommand{\Sig}{\Sigma}
\newcommand{\Om}{\Omega}
\newcommand{\bdSig}{\partial\Sigma}
\newcommand{\dA}{\,dA}
\newcommand{\ds}{\,ds}
\newcommand{\Ind}{\operatorname{Ind}}
\newcommand{\ind}{\operatorname{ind}}
\newcommand{\Null}{\operatorname{Null}}
\newcommand{\tr}{\operatorname{tr}}
\newcommand{\Span}{\operatorname{span}}
\newcommand{\eps}{\varepsilon}
\newcommand{\cC}{\mathcal C}
\newcommand{\cT}{\mathcal T}
\newcommand{\cS}{\mathcal S}
\newcommand{\cR}{\mathcal R}
\newcommand{\cP}{\mathcal P}
\newcommand{\cV}{\mathcal V}
\newcommand{\cK}{\mathcal K}
\newcommand{\mob}{\mathfrak m}
\newcommand{\Dbh}{D_{b\mathrm h}}
\newcommand{\Rot}{\mathrm{Rot}_b}
\newcommand{\hRot}{\widehat{\mathrm{Rot}}_b}

\title[The Morse index of prismatic free boundary minimal surfaces]{The Morse index of prismatic free boundary\\ minimal surfaces in the unit ball}
\author{Hung Tran}
\address{Department of Mathematics and Statistics, Texas Tech University, Lubbock, TX 79409, USA}
\email{hung.tran@ttu.edu}
\date{October 2026}

\begin{document}

\begin{abstract}
For integer $b\ge3$, let $\Sig\subset\Ball$ be an embedded free boundary minimal surface (FBMS) of genus zero with $b$ boundary
components. Suppose that $\Sig$ is invariant under the prismatic group of order $4b$, the rotation of order $b$ permutes
the boundary components cyclically, and the reflection in the equatorial plane preserves each of them. We show that the
Morse index of $\Sig$ is $2b$ and its nullity is three; moreover, we determine the negative space of the index form as a
representation of the symmetry group. Examples include the $b$-noids of Karpukhin,
Kusner, McGrath, and Stern and, for large $b$, the genus-zero surfaces of Folha, Pacard, and Zolotareva. To our
knowledge, apart from the equatorial disc and the critical catenoid, these are the first FBMS
in the ball whose Morse indices are known precisely. There are also lower bounds for the index of surfaces with
rotational or polyhedral symmetry. The main new ingredient is an equivariant comparison, using the representation theory of finite groups, between area and energy, in which the Teichm\"uller directions
are counted by symmetry type using circle domains. 
\end{abstract}

\maketitle

\section{Introduction}\label{sec:intro}

A \emph{free boundary minimal surface} (FBMS) in the closed unit ball $\Ball\subset\R^3$ is a compact minimal surface
$\Sig\subset\Ball$ with $\bdSig=\Sig\cap\Sph$ which meets the boundary sphere $\Sph=\partial\Ball$ perpendicularly. Such a
surface is a critical point of the area functional among surfaces in $\Ball$ whose boundaries are free to move on $\Sph$.
The subject has received tremendous interest since A.~Fraser and R.~Schoen related these surfaces to extremal metrics for
the first Steklov eigenvalue \cite{FS11,FS16}; see the surveys \cite{Li20,Fra20} and references therein. Throughout this
paper, all surfaces are assumed to be connected and embedded.

From the viewpoint of Morse theory, it is essential to study the second variation of area. For a unit normal $\nu$ and a
normal variation $f\nu$, it is given by the index form
\begin{equation}\label{eq:S}
  S(f,f)=\int_\Sig\bigl(|\nabla f|^2-|A|^2f^2\bigr)\dA-\int_{\bdSig}f^2\ds ,
\end{equation}
where $A$ denotes the second fundamental form. The \emph{Morse index} $\Ind(\Sig)$ is the number of negative eigenvalues
of $S$, counted with multiplicity; equivalently, it is the maximal dimension of a subspace on which $S$ is negative
definite. 
The \emph{nullity} $\Null(\Sig)$ is the dimension of the kernel $\ker S$.

The simplest examples are the equatorial \emph{flat disc}, of index one, and the \emph{critical catenoid}, the unique piece of a
catenoid centered at the origin which meets $\Sph$ orthogonally. 
Many further embedded examples have been constructed via
variational methods \cite{Ket16,CFS22,Chu23,FKS24,FS26}, gluing \cite{FPZ17,KL21,KM23,KW23,CSW25b,CSW25c,KZ26}, and
eigenvalue optimization \cite{FS16,KS24,KKMS24}. Remarkably, M.~Karpukhin, R.~Kusner, P.~McGrath, and D.~Stern showed that
every compact orientable surface with non-empty boundary is realized as an embedded FBMS in
$\Ball$ \cite{KKMS24}.

In contrast, much less is known about the Morse index. Fraser and Schoen proved that an FBMS other
than a flat disc has index at least four \cite{FS16}. The index of the critical catenoid is exactly four, as independently shown by Devyver \cite{Dev19}, G.~Smith and
D.~Zhou \cite{SZ19}, and the author \cite{Tra20}; moreover, its nullity is two \cite[Thm.~1.3]{Tra20}. We refer to
\cite{SSTZ21} for the higher-dimensional critical catenoids. There are numerous estimates on the index, developed in the last decade, including \cite{Sar17, ACS18, Lim22, Fra23, CSW25a, CSW25c, CSW25d}. Nevertheless, as noted in \cite[\S1]{CSW25a}, apart from the equatorial disc and the critical catenoid, no FBMS in $\Ball$ had a known Morse index.

In this paper, we compute the Morse index and nullity of a family of genus-zero surfaces with prismatic symmetry. 
For $b\ge3$, let $\Dbh\subset O(3)$ be the prismatic group of order $4b$, generated by the reflections in $b$ 
planes, which contain the $x_3$-axis $\ell$ and are equally spaced at angles $\pi/b$, and the reflection
$\sigma_h$ in the plane $\{x_3=0\}$. In particular, $\Dbh$ contains the rotation $\Rot$ by $2\pi/b$ about $\ell$. 

\begin{definition}\label{def:Sb}
For $b\ge3$, let $\cS_b$ be the set of embedded FBMS $\Sig\subset\Ball$ of genus zero with $b$
boundary components which are invariant under $\Dbh$, such that $\Rot$ permutes the boundary components cyclically and
$\sigma_h$ maps each boundary component to itself.
\end{definition}

\begin{remark}\label{rem:examples}
The class $\cS_b$ is non-empty for every $b\ge3$. Here are the relevant constructions.
\begin{itemize}[leftmargin=*,itemsep=2pt]
\item Karpukhin, Kusner, McGrath, and Stern maximized a normalized eigenvalue among the
metrics on a surface of genus zero with $b$ boundary components which are invariant under a given action of
$\mathbb Z_2\times D_b$. For the action of type $N_{\rho_1}(\mathbb Z_2\times D_b,\rho_1\rho_2)$ in their notation, the maximum is attained and is induced by an equivariant free boundary minimal embedding
\cite[Thm.~9.15(7)]{KKMS24}. The resulting surface, called the $b$-noid, belongs to $\cS_b$. 
\item For every sufficiently large $b$, the genus-zero surfaces constructed by A.~Folha, F.~Pacard, and T.~Zolotareva
\cite[Thm.~1.1]{FPZ17} belong to $\cS_b$. Each of them consists of two nearly parallel horizontal discs joined by $b$
small bridges placed periodically along the equator. It is invariant under the group generated by $\sigma_h$, a
half-turn about a horizontal axis, and the rotations preserving a regular $b$-gon; this group is $\Dbh$. 
Furthermore, $\sigma_h$ preserves each boundary component and $\Rot$ permutes them cyclically.
\item G.~Franz and M.~Schulz constructed by min--max methods, for every $b\ge3$, an embedded FBMS
of genus zero with $b$ boundary components aligned along the equator, which is invariant under the group of order $2b$
generated by $\Rot$ and a half-turn about a horizontal line \cite[Thm.~5.1]{FS26}. They conjecture that its full symmetry
group is $\Dbh$ \cite[Rem.~5.2]{FS26}.
\end{itemize}
Whether the constructions of \cite{FPZ17}, \cite{FS26}, and \cite{KKMS24} produce the same surfaces is an open question. 
\end{remark}


Our approach involves the representation theory of a finite group. Towards that end,  we recall some notation (see \S\ref{ssec:reps} for more details). Let $D_b\subset\Dbh$ be the dihedral group generated by the reflections in the $b$ vertical planes. 
We write $A_1$ and $A_2$ for the
trivial and sign representations of $D_b$, $E_1$ for its representation on the plane $\ell^\perp$, and $\cR_b$ for its regular
representation. For a representation $\pi$ of $D_b$, $\pi^\pm$ denotes the representation of
$\Dbh=D_b\times\langle\sigma_h\rangle$ which restricts to $\pi$ on $D_b$ and on which $\sigma_h$ acts by $\pm1$. Finally,
$\Dbh$ acts on functions on $\Sig$ by $g\cdot f=f\circ g^{-1}$, and the index form is invariant under this action.

\begin{theorem}\label{thm:main}
Let $b\ge3$ and $\Sig\in\cS_b$. Then
\[
  \Ind(\Sig)=2b .
\]
More precisely, the negative space of $S$ is isomorphic, as a representation of $\Dbh$, to
$(\cR_b\ominus A_2)^+\oplus A_1^-$. That is, its $\sigma_h$-even part has dimension $2b-1$ and contains each irreducible
representation $\pi\ne A_2$ of $D_b$ with multiplicity $\dim\pi$, while its $\sigma_h$-odd part is spanned by a single
$D_b$-invariant function. Furthermore, $\ker S$ is spanned by the functions $\langle a\times X,\nu\rangle$, $a\in\R^3$,
induced by the rotations; in particular, $\Null(\Sig)=3$.
\end{theorem}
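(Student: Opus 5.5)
We decompose the space of normal variations into isotypic components of $\Dbh$ and compute the index and nullity of $S$ on each component. The lower bound comes from an equivariant version of the Fraser--Schoen / Ejiri--Micallef comparison between area and energy. Recall that for a conformal harmonic map, the energy Hessian $E''$ on all variations $V$ (sections of $X^*T\R^3$, tangent to $\Sph$ along $\bdSig$) relates to $S$ by
\[
\Ind_E\le\Ind(S)\le\Ind_E+r,\qquad \Null(S)\ \ge\ \Null_E-\text{(tangential/Teichm\"uller defect)} ,
\]
where $r$ is the real dimension of the Teichm\"uller space, $r=3b-6$ for genus zero with $b$ boundary circles. We make this comparison $G$-equivariant: in each isotypic component $\pi$, the multiplicity of $\pi$ in the negative space of $S$ is bounded between that for $E$ and that for $E$ plus the multiplicity of $\pi$ in the tangent space of Teichm\"uller space.

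\textbf{Step 1 (Teichm\"uller directions by symmetry type).} Uniformize $\Sig$ as a circle domain with $\Dbh$ acting by M\"obius/anti-M\"obius maps; this is possible because circle domains are unique up to M\"obius transformations. Parametrize the deformations by the centers and radii of the $b$ circles modulo $PSL_2(\C)$. The circles are permuted like the cosets $D_b/\langle\text{reflection}\rangle$, with the reflections fixing circles. Hence the deformation space is $3\cdot\text{Ind}_{\mathbb Z_2}^{D_b}(\cdot)$ minus the conformal group. We compute its character and track the $\sigma_h$-parity, obtaining a virtual representation of total dimension $3b-6$.

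\textbf{Step 2 (energy Hessian via coordinate functions).} For $E$, use the standard test fields: the conformal vector fields $a-\langle a,X\rangle X$ restricted to $\Sig$, and the Steklov eigenfunctions $x_i$, which have eigenvalue $1$. Since $\Sig$ is a $\sigma_1$-extremal surface with $x_i$ spanning the first eigenspace (genus zero, as in Fraser--Schoen), we get $E$-negative directions from the coordinates twisted by the second eigenspace. More concretely, we bound from below the negative space of $S$ by test functions built from $\langle a,\nu\rangle$, which give $E_1^-\oplus A_1^{+}$-type directions, together with the functions $\langle X,\nu\rangle$-type and support functions. On each isotypic component $\pi$ we exhibit $\dim\pi$ negative directions, using nodal-domain counting on the fundamental domain $\Sig/\Dbh$ with mixed Dirichlet/Neumann reflection conditions. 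The upper bound on each component comes from a Courant-type argument on the fundamental domain: in each symmetry class, the first mixed eigenvalue problem on $\Sig/\Dbh$ (a disc-type domain) has at most one negative eigenvalue, which is controlled by nodal lines of the Jacobi fields $\langle a\times X,\nu\rangle$ and $\langle a,\nu\rangle$.

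\textbf{Step 3 (kernel and conclusion).} Show that for each $\pi$ the rotation Jacobi fields fill the kernel. We use the Fourier decomposition under the cyclic group $\langle\Rot\rangle$ (characters $e^{2\pi i k/b}$) combined with the $\sigma_h$-parity. For each character, the mixed problem on the fundamental piece has its first eigenfunction sign-definite. Comparing with the explicit Jacobi fields, which have a sign or known nodal set, pins down whether eigenvalue $0$ is first or second. Summing multiplicities yields $(\cR_b\ominus A_2)^+\oplus A_1^-$, of total dimension $(2b-1)+1=2b$, and nullity $3$ (the $A_2^+\oplus E_1^-$ rotations).

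\textbf{Main obstacle.} The hardest part is the matching upper bound in the $\sigma_h$-even components: the Teichm\"uller slack must be shown to be exactly absorbed. I would first try to prove that in each $\pi^+$ the Teichm\"uller multiplicity coincides with the gap between the $E$-index and the claimed $S$-index, using the character computation from Step 1. If this fails, I would fall back on a nodal-domain argument with the symmetric Jacobi field $\langle X,\nu\rangle$.
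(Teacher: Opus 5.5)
Your framework for the $\sigma_h$-even sector is the one the paper uses: an equivariant Ejiri--Micallef/Lima comparison, the energy index bounded by $m_\pi(\R^3)$ once $\sigma_1(\Sig)=1$, and Teichm\"uller directions counted as cap motions of a circle domain modulo M\"obius fields. The even-sector matching you call the main obstacle is in fact a short character computation. The inputs are $T_\omega\cC\cong\cR_b^+\oplus\cP_b^-$ and $\mob\cong E_1^+\oplus A_1^-\oplus E_1^-\oplus A_2^+$, which give $m_{\pi^+}(\cT_\Sig)\le\dim\pi-[\pi=A_2]-[\pi=E_1]$. Here $T_\omega\cC$ is not three copies of one induced module, because the stabilizer $\langle s_i,\sigma_h\rangle$ of a cap acts with different signs on the three motions. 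Two inputs are nontrivial: $\sigma_1(\Sig)=1$ is a recent theorem of Seo, resp.\ Chodosh--Gianocca, not a consequence of genus zero; and one must prove that cap motions exhaust $\cT_\Sig$.

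The lower bound, however, is not supplied. The inequality $\ind_E\le\Ind(\Sig)$ yields at most $3$. The claim of ``$\dim\pi$ negative directions in each isotypic component'' is false as stated, since $A_2^+$ and every odd type other than $A_1^-$ carry none. What works is to cut the $A_2^+$ Jacobi field $R_{e_3}$ along the $b$ vertical mirror planes into $2b$ pieces. They span a copy of $(A_2\otimes\cR_b)^+\cong\cR_b^+$ on which $S\equiv0$ and which meets $\ker S$ only in $\R R_{e_3}$; this gives $(\cR_b\ominus A_2)^+$, while $\nu_{e_3}$ gives $A_1^-$. You also have the types of $\nu_a$ reversed: they transform like $\R^3\cong E_1^+\oplus A_1^-$.

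The decisive gap is the $\sigma_h$-odd sector, which your proposal never confronts. The odd part of the cap motions modulo M\"obius fields is $\cP_b^-\ominus(A_1^-\oplus E_1^-)\cong\bigoplus_{2\le k<b/2}E_k^-$ (plus $B_1^-$ for even $b$), of dimension $b-3$. So the equivariant comparison alone gives only $\Ind(\Sig)\le(2b-1)+(b-2)=3b-3$, which is Lima's bound, and leaves room for negative directions of types $E_k^-$, $k\ge2$, and $B_1^-$. Your Courant-type upper bound cannot remove them. It would need kernel elements of these types with controlled nodal sets, but the only known Jacobi fields $R_a$ have types $A_2^+$ and $E_1^-$; and ``at most one negative eigenvalue per class'' would still allow a copy of $E_k^-$.

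The missing idea is to restrict odd functions to the upper half-disc $U$ with Dirichlet data on $\Lambda$.
\begin{itemize}
\item The first eigenfunction is positive and $D_b$-invariant.
\item Any second eigenfunction has exactly two nodal domains, so by a Jordan-curve argument it cannot vanish to second order at the $\Rot$-fixed point $N$.
\item The map $f\mapsto(f(N),df_N)$ is $D_b$-equivariant with values in $A_1\oplus E_1$, since $T_N\Sig=\ell^\perp$. By Schur's lemma, a negative second eigenspace would therefore contain $A_1$ or $E_1$, contradicting $\ind_{A_1^-}\le1$ and $\ind_{E_1^-}=0$.
\end{itemize}

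The same problem affects your Step~3. Kernel elements in types without an explicit Jacobi field (e.g.\ $A_1^+$, $E_k^+$) cannot be excluded by comparison with the rotation fields. The paper instead uses the non-strict comparison $\ind_\pi+\operatorname{null}_\pi\le m_\pi(\R^3)+m_\pi(\cV_1)+m_\pi(\cT_\Sig)$. The fact you quote, that $X_1,X_2,X_3$ span the eigenvalue-one Steklov eigenspace, enters through a quadric argument on the $b\ge3$ boundary curves showing $\cV_1=\{a\times X\}\cong E_1^-\oplus A_2^+$. The remaining odd types are then ruled out by the same argument at $N$, applied to the eigenspace $\lambda_2=0$.
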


\begin{remark}\label{rem:main}
To our knowledge, apart from the
equatorial disc and the critical catenoid, these are the first FBMS in $\Ball$ whose Morse
index and nullity are known. This also confirms part of \cite[Conjecture 7.8(iv)]{CSW25b}. 
\end{remark}

\begin{remark}
	The representation-theoretic framework of Theorem \ref{thm:main} remains valid for $b=2$. Here, $(\cR_2\ominus A_2)^+\oplus A_1^-\cong A_1^+\oplus B_1^+\oplus B_2^+\oplus A_1^-$; $A_1^+$ corresponds to the constant function while other components correspond to components of the normal vector. Indeed, it is possible to adapt the strategy in the proof of Theorem \ref{thm:main} for $b=2$ to give another verification that the Morse index of the critical catenoid is four. 
\end{remark}

\begin{remark}\label{rem:bounds}
Two general bounds bracket the index. The lower bound $\Ind(\Sig)\ge2b$ only requires the mirror symmetries; it follows
from a result of Carlotto, Schulz, and Wiygul \cite{CSW25a, CSW25c}. There is an upper bound due to Lima \cite[Thm.~A]{Lim22}, which is the free boundary form of the comparison between area and energy done by Ejiri--Micallef for closed surfaces \cite{EM08}. It bounds the index by the energy index plus the dimension of the Teichm\"uller
space. Seo \cite[Thm.~1.1]{Seo24} shows that every $\Sigma\in\cS_b$ has first Steklov eigenvalue equal to one (this was recently extended by Chodosh and Gianocca \cite{CG26} to every embedded FBMS of genus zero); consequently, Lima's theorem gives the upper bound $\Ind(\Sig)\le3b-3$ (see Lemma \ref{lem:energy}), which leaves a gap for $b\ge4$.  


\end{remark}

\begin{remark} We refine the upper bound in two ways. First, using representation theory, we prove an upper bound for each symmetry
type separately (Proposition~\ref{prop:EM}). 
Second, using circle domains, we compute the Teichm\"uller
directions of each type: they are induced by the motions of $b$ round caps modulo the M\"obius motions
(Proposition~\ref{prop:teich}).
\end{remark}

Precisely, the proof of Theorem~\ref{thm:main} follows the steps below.
\begin{itemize}[leftmargin=*,itemsep=2pt]
\item \emph{Step 1: the lower bound} (Proposition~\ref{prop:lower}). The bound $\Ind(\Sig)\ge2b$ is due to Carlotto,
Schulz, and Wiygul \cite[Prop.~4.2]{CSW25a}, \cite[proof of Cor.~3.8]{CSW25c}, via a nodal domain argument going back to
\cite{Choe90,MR91}. We further record that the negative space contains a copy of $(\cR_b\ominus A_2)^+\oplus A_1^-$.
\item \emph{Step 2: area and energy} (Proposition~\ref{prop:EM}). We prove the refined, equivariant comparison
described above. Since $\sigma_1=1$ \cite[Thm.~1.1]{Seo24} (see also \cite{CG26}), the negative space of the energy is controlled by the representation
$\R^3$.
\item \emph{Step 3: Teichm\"uller directions} (Proposition~\ref{prop:teich}). Inspired by the work of
Choe, Fraser, and Schoen \cite{ChFS25}, who parametrize symmetric FBMS of genus zero in
rectangular prisms by circle domains, we realize $\Sig$ conformally as the sphere minus $b$ equal round caps centered on
the equator (Lemma~\ref{lem:model}). The Teichm\"uller directions are then induced by the motions of the caps modulo the M\"obius
motions, and, with the measurable Riemann mapping theorem, we bound them for each symmetry type,
whereas the Riemann--Roch argument of \cite{Lim22} only counts their dimension. To our knowledge, this is the first use
of circle domains to bound Teichm\"uller directions by symmetry type, and the first in a computation of the Morse index.
This settles the $\sigma_h$-even sector completely and, in the odd sector, excludes the types $A_1^-$ and $E_1^-$ beyond
the translation.
\item \emph{Step 4: the odd sector} (Proposition~\ref{prop:odd}). We combine the decomposition of symmetric types, the order of an umbilic point, and Courant's nodal domain theorem to analyze the index form restricted to the upper half of $\Sig$  

\item \emph{Step 5: the nullity} (Proposition~\ref{prop:null}). We use the non-strict version of the inequality in Step~2,
together with the fact that the Steklov eigenfunctions of $\Sig$ with eigenvalue one are the linear combinations of the
coordinate functions (Lemma~\ref{lem:steklov}, a special case of \cite[Props.~5.12, 5.15]{KKMS24}; compare
\cite{KM24}). It follows that the elements of $\ker S$ are exactly the normal parts of the rotations.
\end{itemize}

The combination of symmetry reduction and Courant's nodal domain theorem is crucial in the computation of the index
and nullity; see, for example,  \cite{KW20} for the Lawson surfaces $\xi_{g,1}$ in $\mathbb S^3$. Our approach also yields lower bounds for the index of FBMS with symmetry, which might be of independent interest. The first one concerns rotational symmetry.

\begin{theorem}\label{thm:rot}
Let $\Sig\subset\Ball$ be an FBMS of genus zero with $b\ge2$ boundary components. Let $\rho$ be a
rotation of order $m\ge3$ about a line $\ell$ through the origin such that $\rho(\Sig)=\Sig$ and $\nu\circ\rho=\rho\,\nu$;
the latter holds automatically if $m$ is odd or if $\langle X,\nu\rangle>0$ in the interior of $\Sig$. Let $d$ be the
number of boundary components mapped to themselves by $\rho$. Then $d\le2$, $d\equiv b\pmod m$, and
\[
  \Ind(\Sig)\ \ge\ 1+(2-d)(m-1).
\]
In particular, $\Ind(\Sig)\ge2m-1$ if $m$ divides $b$.
\end{theorem}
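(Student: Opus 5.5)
The plan is to split the proof into a topological part (giving $d\le2$ and $d\equiv b\pmod m$) and an analytic part, where the index form is decomposed by the characters of the cyclic group $G=\langle\rho\rangle$.

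\emph{Topology.} Since $\rho\in SO(3)$ and $\nu\circ\rho=\rho\,\nu$, the map $\rho$ preserves the orientation of $\Sig$. When $m$ is odd this condition is automatic. When $\langle X,\nu\rangle>0$ it also holds, because $\rho$ preserves $\langle X,\nu\rangle$.

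Cap each boundary circle of $\Sig$ by a disc and extend $\rho$ equivariantly, by coning on the caps. This gives an orientation-preserving action of $G\cong\mathbb Z_m$ on $\Sph$. By Ker\'ekj\'art\'o's theorem this action is conjugate to a rotation. Hence every nontrivial element of $G$ has the same two fixed points $p_\pm$, and all other orbits are free.

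A boundary component fixed by $\rho$ bounds a cap containing one of $p_\pm$, so $d\le2$. Every other boundary component has trivial stabiliser: a nontrivial stabiliser would force its cap to contain $p_\pm$. Hence the $b-d$ remaining components fall into free orbits of size $m$, which gives $d\equiv b\pmod m$. Moreover, exactly $2-d$ of the points $p_\pm$ lie in the interior of $\Sig$, on $\ell$. At each such point $d\rho$ acts on the tangent plane as a rotation by $2\pi/m$.

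\emph{Decomposition by characters.} Complexify and write $L^2(\Sig)=\bigoplus_{k\in\mathbb Z_m}V_k$, where $V_k=\{f:f\circ\rho^{-1}=\omega^kf\}$ and $\omega=e^{2\pi i/m}$. The form $S$ is $G$-invariant, so the index is the sum of the indices of $S$ on the spaces $V_k$. It therefore suffices to show that $S|_{V_0}$ has at least one negative direction and that $S|_{V_k}$ has at least $2-d$ (complex) negative directions for each $k\ne0$.

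For $V_0$, test with the constant function $1$: $S(1,1)=-\int|A|^2-|\bdSig|<0$.

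For $k\ne0$, I would realise $V_k$ as sections of the flat line bundle over the orbifold disc $\Sig/G$ whose holonomy around each interior cone point is $\omega^k$. Near an interior fixed point $p$, take a conformal coordinate $z$ in which $\rho$ acts by $z\mapsto\omega z$. Then elements of $V_k$ behave like $z^k$ or $\bar z^{\,m-k}$.

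\emph{Main obstacle: the negative directions in $V_k$ for $k\ne0$.} For each interior fixed point I want to produce a test function in $V_k$ on which $S$ is negative, and then show that those attached to $p_+$ and $p_-$ (when $d=0$) are independent.

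The first thing I would try is functions built from the Gauss map. Let $g$ be the stereographic image of $\nu$, normalised so that $g(p)=0$. Because $\nu\circ\rho=\rho\,\nu$, the function $g$ transforms by $\omega$, so $\mathrm{Re}$ and $\mathrm{Im}$ of $\omega$-equivariant expressions such as $g^k$ lie in the appropriate $V_{\pm k}$. The normal translations $\langle\nu,a\rangle$ with $a\perp\ell$ lie in $V_{\pm1}$ and satisfy $S(f,f)=-\int_{\bdSig}f^2<0$ unless $f$ vanishes identically, which already covers $k=\pm1$. For general $k$ I would use the $g$-based functions above in the same way.

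If these test functions prove awkward, the fallback is a nodal argument. Let $u$ be a first eigenfunction of the Jacobi--Robin problem restricted to $V_k$, and suppose $S|_{V_k}$ had fewer than $2-d$ negative directions. Then $u$ (or a suitable real combination) would have eigenvalue $\ge0$. Its vanishing orders at the cone points, together with Courant's theorem on $\Sig/G$, would then force too many nodal domains, contradicting the negativity of the translations. I expect this counting, especially proving independence at the two poles when $d=0$, to be the delicate point.

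Summing over characters gives $\Ind(\Sig)\ge1+(2-d)(m-1)$. If $m\mid b$, then $d\equiv0\pmod m$ and $d\le2<m$, so $d=0$ and $\Ind(\Sig)\ge2m-1$.
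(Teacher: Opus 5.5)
The topological part of your proposal is fine. Capping off and invoking Ker\'ekj\'art\'o's theorem is a legitimate alternative to the paper's equivariant Koebe argument, and it yields the same conclusions: $d\le2$, $m\mid b-d$, and exactly $2-d$ fixed points of $\rho$, all in $\Sig^\circ$ on $\ell$. The case $d=2$ via the constant function also matches the paper.

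\textbf{The gap.} The analytic part rests on the claim that $S|_{V_k}$ has at least $2-d$ negative directions for \emph{every} $k\ne0$. This is a character-by-character statement strictly stronger than the theorem, and you do not prove it.
\begin{itemize}
\item For $d=0$, the translations $\nu_c$ with $c\perp\ell$ span a real plane. Its complexification is one line in $V_1$ and one line in $V_{-1}$. So they give one negative direction in each of $V_{\pm1}$, not two, and do \emph{not} ``already cover $k=\pm1$''.
\item For $2\le k\le m-2$, you give no identity making $S(\operatorname{Re}g^k,\operatorname{Re}g^k)$ negative. Indeed $g^k$ has poles wherever $\nu$ hits the antipode of $\nu(p)$, so it need not lie in $H^1$. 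Neither $J(\operatorname{Re}g^k)$ nor its Robin boundary term has usable structure.
\item The nodal fallback fails as stated. Elements of $V_k$ with $k\ne0$ are complex-valued sections of a nontrivial flat bundle on $\Sig/G$, to which Courant's theorem does not apply. You also have no control over the vanishing orders of such eigenfunctions at the cone points.
\end{itemize}
A minor slip: the correct identity is $S(\nu_c,\nu_c)=-2\int_\Sig\nu_c^2$, not $-\int_{\bdSig}\nu_c^2$; the sign is unaffected.

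\textbf{The missing idea.} You need neither a distribution of negative directions among characters nor new test functions. The single real Jacobi field $R_a=\langle a\times X,\nu\rangle$ of rotations about $\ell$ does all the work.
\begin{itemize}
\item When $d\le1$, $\Sig$ is not rotationally symmetric about $\ell$. Hence $0\not\equiv R_a\in\ker S$.
\item At each interior fixed point $q$ of $\rho$, the $\rho$-invariance of the Hopf differential forces $m_q\ge m-2$ (Lemma~\ref{lem:hopf}).
\item Since $X(q)\in\ell$, $R_a$ vanishes at $q$ to order $m_q+2\ge m$ (Lemma~\ref{lem:orders}).
\item An Euler characteristic count of the graph $R_a^{-1}(0)\cup\bdSig$ on the capped-off sphere gives at least $2+(2-d)(m-1)$ nodal domains (Proposition~\ref{prop:fibre}).
\item Cutting $R_a$ along its nodal set gives a space on which $S\le0$ and which meets $\ker S$ only in $\R R_a$ (Lemma~\ref{lem:pieces}). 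Hence $\Ind(\Sig)\ge1+(2-d)(m-1)$.
\end{itemize}
The high vanishing order that drives the count comes from the Hopf differential, which your proposal never uses.
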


\begin{remark}\label{rem:fibre}
Theorem~\ref{thm:rot} follows from a more general lower bound without any symmetry assumption
(Proposition~\ref{prop:fibre}). 
\end{remark}

Next, we consider symmetry groups which act irreducibly on $\R^3$.

\begin{theorem}\label{thm:poly}
Let $\Sig\subset\Ball$ be an FBMS, and let $\Gamma\subset O(3)$ be a finite group acting
irreducibly on $\R^3$ such that $g(\Sig)=\Sig$ for all $g\in\Gamma$. Then $\Ind(\Sig)\ge7$.
\end{theorem}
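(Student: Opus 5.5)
We prove Theorem~\ref{thm:poly} by building an explicit subspace of dimension at least $7$ on which $S$ is negative definite, out of the standard test functions together with the representation theory of $\Gamma$. The two basic families are the coordinate functions of the normal, $\langle a,\nu\rangle$ for $a\in\R^3$, and the support-type functions. For an FBMS, the classical computation of Fraser--Schoen \cite{FS16} gives
\[
  S(\langle a,\nu\rangle,\langle a,\nu\rangle)=-\int_{\bdSig}\langle a,\nu\rangle^2\ds .
\]
The same computation applies to the $5$-dimensional space of conformal-type test functions coming from traceless quadratic forms: if $Q$ is a traceless symmetric matrix, set
\[
  \phi_Q=\langle QX,\nu\rangle \quad\text{or}\quad \phi_Q=\langle X,\nu\rangle\langle QX,X\rangle\text{-type combinations}.
\]
The index is at least $4$ by \cite{FS16}: the three normal coordinates together with one further function, for instance $\langle X,\nu\rangle$ or a constant, span a negative space. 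This covers the non-flat case, since a flat disc is not invariant under an irreducible $\Gamma$.

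The key step is to exploit irreducibility. The map $a\mapsto\langle a,\nu\rangle$ is $\Gamma$-equivariant, up to the orientation character $\chi$ (with $\nu\circ g=\chi(g)\,g\nu$), from $\R^3$ into functions on $\Sig$. Likewise the space $\cK$ of Killing/rotation fields, $a\mapsto\langle a\times X,\nu\rangle$, gives a copy of $\Lambda^2\R^3\otimes\chi$ that lies in $\ker S$. The plan is then the following.
\begin{itemize}[leftmargin=*,itemsep=2pt]
\item First, use a nodal domain argument in the spirit of Proposition~\ref{prop:lower} and Proposition~\ref{prop:fibre}. An irreducible finite $\Gamma$ contains rotations of order $m\ge3$ about several axes (tetrahedral, octahedral, or icosahedral rotation subgroup), or else it is a dihedral-type group whose rotation part nonetheless acts irreducibly; only the polyhedral groups act irreducibly, so there is a rotation of order $m\ge3$. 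Theorem~\ref{thm:rot} then gives $\Ind\ge 1+(2-d)(m-1)$. When $d=0$ this is at least $5$, and when $d\le1$ it is at least $3$. This alone is insufficient.
\item Second, use the irreducibility of $\R^3$. Any negative eigenspace of $S$ is $\Gamma$-invariant, and by Fraser--Schoen the span of the $\langle a,\nu\rangle$ is an irreducible $3$-dimensional negative piece. Next I would show there is a second, linearly independent, irreducible $3$-dimensional negative piece, obtained from the first eigenfunction applied to a fibre or rotational comparison of type $\R^3$. Take it orthogonal in the $S$-sense: $\Gamma$-isotypic components of different types are $S$-orthogonal, and within the same type one applies the variational characterization on the isotypic subspace. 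Together with the first eigenfunction, which is $\Gamma$-invariant and hence of trivial type, this gives $1+3+3=7$.
\end{itemize}

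The main obstacle is producing the second $3$-dimensional negative piece. The route I would try first is to exploit the $\R^3$-isotypic component of $S$ restricted to functions odd under a reflection or half-turn in $\Gamma$. Since $\Gamma$ acts irreducibly, the isotypic multiplicity of the standard representation in the negative space equals the number of negative eigenvalues of $S$ restricted to the $\mathrm{Stab}$-equivariant functions for a point stabilizer (Frobenius reciprocity). One then shows, with Courant's theorem on a fundamental domain as in Proposition~\ref{prop:odd}, that this equivariant problem has at least two negative eigenvalues: one is realized by $\langle a,\nu\rangle$, and the second by a test function with more nodal domains, built from $\langle X,\nu\rangle$ cut by the mirror planes. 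Finally one checks that these cannot coincide with rotation kernels, which are of type $\Lambda^2\R^3\otimes\chi$.
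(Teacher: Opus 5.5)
Your proposal has a genuine gap, and it sits exactly where you flag the main obstacle: the second three-dimensional negative piece is never constructed, and the route you sketch for it does not work as stated.

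\textbf{The sketched route.} $\Gamma$ may be one of the pure rotation groups $T$, $O$, $I$, so there need not be any mirror planes to cut along. The theorem also has no genus hypothesis, so nodal-domain and Euler-characteristic counts of the type in Proposition~\ref{prop:fibre} lose a term $2g$; Theorem~\ref{thm:rot} even assumes genus zero. Cutting $\zeta=\langle X,\nu\rangle$ along its nodal set produces only $S$-null directions, with no control over their symmetry type. Indeed $S(\zeta,\zeta)=0$, since $J\zeta=0$ and $\zeta=0$ on $\bdSig$, so $\zeta$ is not a negative direction; this also undercuts your suggested basis for index four. The Frobenius-reciprocity reduction to a point stabilizer is not a meaningful statement here, since $\Gamma$ acts freely on generic points of $\Sig$. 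A minor point: the correct identity is $S(\nu_a,\nu_a)=-2\int_\Sig\nu_a^2\dA$, not the boundary integral you wrote.

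\textbf{The missing ingredient.} Even granting a second negative-definite triple $\{w_a\}$, you cannot simply ``take it orthogonal in the $S$-sense.'' Schur's lemma makes distinct isotypic types $S$-orthogonal. But $\{\nu_a\}$ and any second triple would both have the type of $\R^3$, possibly twisted by $\eps$. The sum of two negative-definite subspaces of the same type is negative definite only if the off-diagonal block is controlled quantitatively, and that estimate is the real content of the theorem.

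\textbf{How the paper does it.} The paper uses a concrete second family: the normal parts $u_a=\langle W_a,\nu\rangle$ of the M\"obius boost fields. These satisfy $Ju_a=2\nu_a$ and $\partial_\eta u_a=u_a$, hence $S(u_a,f)=-2\int_\Sig\nu_af\dA$. The paper then shows that $\Gamma\supset T$ and $\eps|_T=1$, because $T$ has no subgroup of index two. This settles the orientation character you left open and makes all the relevant matrices scalar by the commutant argument. The Gram matrix on $1,X_1,X_2,X_3,u_{e_1},u_{e_2},u_{e_3}$ has Schur complement $\mathbf C=\mathbf Q+4\boldsymbol\lambda^{\mathrm T}\mathbf G^{-1}\boldsymbol\lambda$ with $\mathbf Q=-\int_\Sig|X^\top|^2\dA\cdot I$. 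The inequality $\bigl(\int_\Sig\zeta\dA\bigr)^2\le\frac1{12}\int_\Sig|X^\top|^2\dA\int_\Sig|A|^2|X|^2\dA$ bounds the coupling term by $\frac19\int_\Sig|X^\top|^2\dA$. Hence $\mathbf C<0$, and Haynsworth inertia gives $4+3=7$. Your plan needs some estimate of this kind for an explicit second family; without it, the count $1+3+3$ is unsupported.
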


The finite subgroups of $O(3)$ acting irreducibly on $\R^3$ are the seven polyhedral groups $T$, $T_d$, $T_h$, $O$,
$O_h$, $I$, and $I_h$ \cite[Ch.~2]{GB85}. Accordingly, we say that $\Sig$ has \emph{polyhedral symmetry} if it is invariant under one of
them. Also, the proof of Theorem~\ref{thm:poly} is based on the conformal vector fields of the ball. 


\begin{remark}
\label{cor:four}
It has been conjectured that the critical catenoid is the only FBMS in $\Ball$ of index four. The author partially resolved that conjecture for embedded FBMS with two boundary components and showed that, in general, such a surface is a polar graph and 
has genus zero \cite{Tra20}. Thus, the embedded case of the conjecture remains open precisely for surfaces of genus zero with $3\leq b\leq 13$ (the upper bound is due to \cite{ACS18}). The results in this paper imply that

\begin{enumerate}[label=\textup{(\roman*)},itemsep=2pt]
\item $\Sig$ has no polyhedral symmetry (from Theorem~\ref{thm:poly});
\item if $\Sig$ has a rotational symmetry of order $m\ge3$, then either
      $b\equiv2\pmod m$, or $m\in\{3,4\}$ and $b\equiv1\pmod m$. In particular, $m\nmid b$, and there is no such
      symmetry for $b=3$ (from Theorem~\ref{thm:rot}).
\end{enumerate}
\end{remark}

\subsection*{Organization}
The organization of the paper is as follows. Section~\ref{sec:prelim} collects preliminary results which will be used
throughout; in particular, it recalls the representation theory of the dihedral and prismatic groups
(\S\ref{ssec:reps}) and the lower bound of Carlotto, Schulz, and Wiygul (Theorem~\ref{thm:mirrors}), which
Proposition~\ref{prop:lower} refines type by type. In Section~\ref{sec:rot}, we establish the lower bound in terms of the fibers of the Gauss map (Proposition~\ref{prop:fibre}) and prove
Theorem~\ref{thm:rot}. Section~\ref{sec:poly} is devoted to M\"obius boosts and the
proof of Theorem~\ref{thm:poly}. Finally, Theorem~\ref{thm:main} is proved in
Section~\ref{sec:prism} following the steps outlined above.

\medskip

\noindent\textbf{Acknowledgements.} The idea of using representation theory of finite groups came to the author during the workshop \emph{The Bochner Technique} at the American Institute of Mathematics (AIM) in May 2026. The author
would like to thank AIM for its hospitality.
\medskip

\noindent\textbf{Declaration of AI usage:} 
Claude Opus 5.5 and Fable 5 were used to perform
literature review, place different mechanisms into one framework, carry out computation, generate illustrations, and as adversarial readers. Fable 5 was particularly useful: when asked about Teichm\"uller theory for FBMS, it brought up circle domains from \cite{ChFS25} and assisted with adapting their framework to our situation (Lemma \ref{lem:model} and Theorem \ref{thm:surj}). All proofs are completely written by the author, who retains exclusive responsibility for the content of this work. 

\section{Preliminaries}\label{sec:prelim}

In this section, we fix our notation and collect preliminary results which will be used throughout the paper. Most of
them are well-known, and short proofs are included for completeness.

\subsection{Conventions}
Throughout the paper, $\Sig\subset\Ball$ denotes a compact, connected, properly embedded FBMS.
It is real-analytic up to its boundary \cite{DHT10}. We write $X$ for the position vector, $\nu$ for a unit normal, $\eta$
for the outward unit conormal along $\bdSig$, and $\tau$ for a unit tangent vector of $\bdSig$. Moreover, $\Sigma^\circ=\Sigma\setminus\partial\Sigma$ denotes the interior of $\Sigma$, $L$ the length of $\bdSig$ and $|\Sig|$ the area of $\Sig$. The second fundamental form is given by
$A(v,w)=\langle D_vw,\nu\rangle$; thus, $\langle d\nu(v),w\rangle=-A(v,w)$. Since $\Sig$ is minimal, $\tr A=0$. The free
boundary condition reads
\begin{equation}\label{eq:fb}
  \eta=X\ \text{ on }\bdSig;\qquad\text{consequently,}\qquad \zeta:=\langle X,\nu\rangle=0\ \text{ on }\bdSig .
\end{equation}
For $a\in\R^3$, we put $X_a=\langle a,X\rangle$, $\nu_a=\langle a,\nu\rangle$, and $R_a=\langle a\times X,\nu\rangle$.
Here $R_a$ is the normal component of the Killing field $x\mapsto a\times x$, which generates the rotations about $\R a$.
Also, $v^\top$ denotes the tangential part of a vector $v$.

The index form $S$ of \eqref{eq:S} is defined on $H^1(\Sig)$. Let $J=\Delta+|A|^2$ be the Jacobi operator. Integrating by
parts, we have, for $f\in H^1(\Sig)$ and $h$ smooth,
\begin{equation}\label{eq:Sparts}
  S(f,h)=-\int_\Sig f\,Jh\dA+\int_{\bdSig}f\,(\partial_\eta h-h)\ds .
\end{equation}
Relative to $L^2(\Sig)$, the form $S$ has a discrete spectrum which is bounded from below, and its eigenfunctions are
smooth up to $\bdSig$. Recall that $\Ind(\Sig)$ is the number of negative eigenvalues of $S$; equivalently, it is the largest
dimension of a subspace of $H^1(\Sig)$ on which $S$ is negative definite. Moreover, $\ker S$ consists of the solutions of
$Jh=0$ with $\partial_\eta h=h$ on $\bdSig$, and its dimension is the nullity $\Null(\Sig)$.

The following identities are standard.

\begin{lemma}\label{lem:basic}
Let $k=A(\tau,\tau)$ along $\bdSig$, and let $'$ denote the derivative with respect to arclength. Then the following
statements hold:
\begin{enumerate}[label=\textup{(\roman*)},itemsep=2pt]
\item $A(\eta,\tau)=0$, $d\nu(\eta)=k\eta$, and $\tau'=-\eta+k\nu$ along $\bdSig$;
\item $\Delta X=0$ in $\Sig$ and $\partial_\eta X=X$ on $\bdSig$; furthermore, $\int_{\bdSig}X\ds=0$, $L=2|\Sig|$, and
      $\int_{\bdSig}X_a^2\ds=\int_\Sig\bigl(|a|^2-\nu_a^2\bigr)\dA$;
\item $J\nu_a=0$ and $\partial_\eta\nu_a=kX_a$; $J\zeta=0$ and $\partial_\eta\zeta=k$; $JR_a=0$ and $\partial_\eta R_a=R_a$;
      in particular, $R_a\in\ker S$;
\item $S(\nu_a,\nu_a)=-2\int_\Sig\nu_a^2\dA$.
\end{enumerate}
\end{lemma}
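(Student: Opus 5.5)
The plan is to verify the four items in order, using the free boundary condition \eqref{eq:fb} and the minimality $\tr A=0$. All computations are local along $\bdSig$ with the orthonormal frame $\{\tau,\eta,\nu\}$ and $\eta=X$.

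For (i), differentiate $\langle\nu,X\rangle=0$ along $\bdSig$ in the direction $\tau$. Since $\langle\nu,\tau\rangle=0$, this gives $\langle d\nu(\tau),X\rangle=0$, that is, $A(\tau,\eta)=0$. Hence $d\nu(\eta)=-A(\eta,\eta)\eta-A(\eta,\tau)\tau=k\eta$, using $A(\eta,\eta)=-A(\tau,\tau)=-k$. For $\tau'$, note that $|X|=1$ on $\bdSig$ and $\eta=X$, so $\tau'=X''$ has component $\langle\tau',X\rangle=-\langle\tau,\tau\rangle=-1$ along $\eta$, component $\langle\tau',\nu\rangle=A(\tau,\tau)=k$ along $\nu$, and component $0$ along $\tau$.

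For (ii), minimality gives $\Delta X=0$. The identity $\partial_\eta X=\eta=X$ is immediate. Integrating $\Delta X=0$ gives $\int_{\bdSig}X\ds=\int_{\bdSig}\partial_\eta X\ds=0$. Applying the divergence theorem to $\Delta|X|^2/2=2$ gives $2|\Sig|=\int_{\bdSig}\langle X,\eta\rangle\ds=L$. For the last identity, use $\Delta(X_a^2/2)=|\nabla X_a|^2=|a^\top|^2=|a|^2-\nu_a^2$ and $\partial_\eta(X_a^2/2)=X_a^2$ on $\bdSig$.

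For (iii), $J\nu_a=0$ is the classical identity $\Delta\nu=-|A|^2\nu$ on minimal surfaces, and $\partial_\eta\nu_a=\langle d\nu(\eta),a\rangle=kX_a$ by (i). For the support function $\zeta$, the standard computation is $\Delta\zeta=-|A|^2\zeta$ (the normal part of dilations). Its boundary derivative is $\partial_\eta\zeta=\langle\eta,\nu\rangle+\langle X,d\nu(\eta)\rangle=k\langle X,\eta\rangle=k$. Since $R_a$ is the normal part of a Killing field, $JR_a=0$. On the boundary, $R_a=\langle a\times X,\nu\rangle=\langle a,X\times\nu\rangle$, and $X\times\nu=\eta\times\nu=\pm\tau$, so $R_a=\pm\langle a,\tau\rangle$. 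Then
\[
\partial_\eta R_a=\langle a,\eta\times\nu\rangle+\langle a,X\times d\nu(\eta)\rangle=\langle a,X\times\nu\rangle+k\langle a,X\times X\rangle=R_a .
\]
By \eqref{eq:Sparts}, $R_a$ therefore lies in $\ker S$.

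For (iv), insert $f=h=\nu_a$ in \eqref{eq:Sparts}:
\[
S(\nu_a,\nu_a)=\int_{\bdSig}\nu_a(kX_a-\nu_a)\ds .
\]
This is the step I expect to be the main obstacle: one has to convert this boundary integral to $-2\int_\Sig\nu_a^2\dA$. The plan is to test $J$ against $X_a$. Since $\Delta X_a=0$, we have $JX_a=|A|^2X_a$. Compare this with $\nu_a$ via Green's formula:
\[
\int_\Sig(X_a\Delta\nu_a-\nu_a\Delta X_a)\dA=\int_{\bdSig}(X_a\partial_\eta\nu_a-\nu_a\partial_\eta X_a)\ds,
\]
which gives $\int_\Sig-|A|^2X_a\nu_a\dA=\int_{\bdSig}(kX_a^2-\nu_aX_a)\ds$. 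That does not directly close, so I would instead use the Hessian identity $\nabla\nu_a=-A(a^\top,\cdot)$ together with $|\nabla\nu_a|^2$ computed from $|A|^2|a^\top|^2/2$ on minimal surfaces. More cleanly, the fallback is to use $\Delta(X_a\nu_a)$. Since $\nabla X_a\cdot\nabla\nu_a=-A(a^\top,a^\top)$, combine it with the tangential divergence $\operatorname{div}(\nu_a a^\top)=-A(a^\top,a^\top)-2\cdot0$ (with $\operatorname{div}a^\top=H\nu_a=0$) to express $\int_{\bdSig}k X_a\nu_a$. Also $\nu_a^2$ integrates via $\operatorname{div}(\nu_a a^\top)$ on $\bdSig$, where $\langle a^\top,\eta\rangle=X_a$. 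Combining these should yield $S(\nu_a,\nu_a)=-2\int_\Sig\nu_a^2\dA$, using $|a|^2$ identities from (ii).
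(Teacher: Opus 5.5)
Your items (i)--(iii) are correct and are the direct computation the paper intends. The gap is in (iv). The reduction $S(\nu_a,\nu_a)=\int_{\bdSig}\nu_a(kX_a-\nu_a)\ds$ is right, but none of the identities you then propose can evaluate the two boundary integrals that remain. They are Green's formula for $X_a$ and $\nu_a$; the divergence of $\nu_a a^\top$, whose flux through $\bdSig$ is $\nu_aX_a$ and not $\nu_a^2$ as you suggest; and $\Delta(X_a\nu_a)$, whose Neumann data is $X_a\nu_a+kX_a^2$. These only relate $\int_{\bdSig}X_a\nu_a\ds$, $\int_{\bdSig}kX_a^2\ds$, $\int_\Sig|A|^2X_a\nu_a\dA$ and $\int_\Sig A(a^\top,a^\top)\dA$, and the third identity is in fact a linear consequence of the first two. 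Neither $\int_{\bdSig}kX_a\nu_a\ds$ nor $\int_{\bdSig}\nu_a^2\ds$ ever occurs, so combining them cannot produce $-2\int_\Sig\nu_a^2\dA$. The pointwise formula $|\nabla\nu_a|^2=\frac12|A|^2|a^\top|^2$ also leaves $\int_{\bdSig}\nu_a^2\ds$ untouched.

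The missing idea is to use the support function $\zeta$ and the field $X^\top$. They produce exactly these boundary terms, because $\zeta=0$, $\partial_\eta\zeta=k$ and $\langle X^\top,\eta\rangle=1$ on $\bdSig$. Use $\nabla\nu_a=d\nu(a^\top)$ and $\nabla\zeta=d\nu(X^\top)$:
\begin{itemize}
\item Green's formula for $\zeta$ and $X_a\nu_a$ gives $\int_{\bdSig}kX_a\nu_a\ds=2\int_\Sig\zeta A(a^\top,a^\top)\dA$.
\item The divergence of $\nu_a^2X^\top$ gives $\int_{\bdSig}\nu_a^2\ds=2\int_\Sig\nu_a^2\dA-2\int_\Sig\nu_aA(a^\top,X^\top)\dA$.
\item Finally, $\int_\Sig\bigl(\zeta A(a^\top,a^\top)+\nu_aA(a^\top,X^\top)\bigr)\dA=-\int_\Sig\langle\nabla(\zeta\nu_a),a^\top\rangle\dA=0$, since $\zeta=0$ on $\bdSig$ and $\operatorname{div}a^\top=\Delta X_a=0$.
\end{itemize}
Subtracting the first two lines and using the third yields (iv).

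Equivalently, in the form of Proposition~\ref{prop:boost}, whose proof does not use (iv): the boost function $u_a=\frac12(1+|X|^2)\nu_a-X_a\zeta$ satisfies $Ju_a=2\nu_a$ in $\Sig$, and $u_a=\nu_a$, $\partial_\eta u_a=u_a$ on $\bdSig$. By \eqref{eq:Sparts}, $S(f,\nu_a)=\int_{\bdSig}f(kX_a-\nu_a)\ds$ depends only on $f|_{\bdSig}$. Hence $S(\nu_a,\nu_a)=S(u_a,\nu_a)=-2\int_\Sig\nu_a^2\dA$.
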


\begin{proof}
Direct computation. See also \cite{Tra20, Tra21}. 	
\end{proof}

\emph{The Steklov problem.} The Steklov eigenvalues $0=\sigma_0<\sigma_1(\Sig)\le\sigma_2(\Sig)\le\cdots$ of $\Sig$ are
the numbers $\sigma$ for which there is a function $u\not\equiv0$ with $\Delta u=0$ in $\Sig$ and $\partial_\eta u=\sigma u$
on $\bdSig$. The first non-zero eigenvalue is characterized by
\[
\sigma_1(\Sig)=\inf\Big\{\frac{\int_\Sig|\nabla u|^2\,dA}{\int_{\bdSig}u^2\,ds}:u\in H^1(\Sig),\ \int_{\bdSig}u\,ds=0,\
u|_{\bdSig}\not\equiv0\Big\},
\]
and the infimum is attained exactly by the eigenfunctions with eigenvalue $\sigma_1(\Sig)$. By Lemma~\ref{lem:basic}(ii),
the coordinate functions $X_1,X_2,X_3$ are Steklov eigenfunctions with eigenvalue one and boundary mean zero; in
particular, $\sigma_1(\Sig)\le1$.
\subsection{Symmetries}\label{ssec:sym}
A \emph{symmetry} of $\Sig$ is an element $g\in O(3)$ with $g(\Sig)=\Sig$. We have the following observation; see also \cite{CSW25a}.

\begin{lemma}\label{lem:sym}
Let $g$ be a symmetry of $\Sig$. Then the following statements hold:
\begin{enumerate}[label=\textup{(\roman*)},itemsep=2pt]
\item $\nu\circ g=\eps(g)\,g\nu$ for some $\eps(g)\in\{\pm1\}$, and $\eps$ is a homomorphism on the group of symmetries
      of $\Sig$;
\item if $g$ is the reflection in a plane and $\Sig$ is not a flat disc, then $\eps(g)=1$;
\item if $\eps(g)=1$, then $f\mapsto g\cdot f:=f\circ g^{-1}$ preserves $S$ and $\ker S$; $g$ preserves the orientation
      of $\Sig$ if and only if $\det g=1$; and, for $a\in\R^3$,
      \[
        g^*A=A,\qquad g\cdot\nu_a=\nu_{ga},\qquad g\cdot R_a=\det(g)R_{ga}.
      \]
\end{enumerate}
\end{lemma}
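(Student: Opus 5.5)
The plan is to prove the three parts in order. Everything follows from two facts: $g$ is a linear isometry of $\R^3$ preserving $\Sig$, and $\Sig$ is connected, embedded and real-analytic.

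\emph{Part (i).} Since $g$ is linear and $g(\Sig)=\Sig$, its differential $dg=g$ maps $T_p\Sig$ onto $T_{gp}\Sig$. Because $g$ is orthogonal, $g\nu(p)$ is a unit vector orthogonal to $T_{gp}\Sig$, so $g\nu(p)=\pm\nu(gp)$. The sign is a continuous $\{\pm1\}$-valued function of $p$, and $\Sig$ is connected, so it is a constant $\eps(g)$. For the homomorphism property, apply the identity twice:
\[
\nu(ghp)=\eps(g)\,g\nu(hp)=\eps(g)\eps(h)\,gh\nu(p),
\]
hence $\eps(gh)=\eps(g)\eps(h)$.

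\emph{Part (ii).} Let $g$ be the reflection in a plane $P$ with unit normal $e$, and suppose $\eps(g)=-1$.
\begin{itemize}
\item \emph{$P$ meets the interior.} Pick $p\in\Sig^\circ$; if $p\in P$ we are done. Otherwise join $p$ to $gp$ by a path in the connected set $\Sig^\circ$. Since $p$ and $gp$ lie on opposite sides of $P$, the path crosses $P$, giving a point $q\in\Sig^\circ\cap P$.
\item \emph{Tangency at $q$.} Since $gq=q$, part (i) gives $g\nu(q)=-\nu(q)$. The $-1$ eigenspace of $g$ is $\R e$, so $\nu(q)=\pm e$ and $T_q\Sig=T_qP$.
\item \emph{Local coincidence.} Near $q$, $\Sig$ is a single sheet (by embeddedness), written as a graph $x\mapsto x+u(x)e$ over a neighbourhood of $q$ in $P$. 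The reflection $g$ maps this graph to the graph of $-u$ and fixes $q$. Invariance of $\Sig$, together with the single-sheet property, then forces $u=-u$, so $u\equiv0$ and $\Sig$ agrees with $P$ near $q$.
\item \emph{Conclusion.} By real-analyticity and connectedness, $\Sig\subset P$. A planar piece meeting $\Sph$ orthogonally must lie in a plane through the origin, so $\Sig$ is a flat disc. This contradicts the hypothesis, hence $\eps(g)=1$.
\end{itemize}

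\emph{Part (iii), second fundamental form and invariance of $S$.} Assume $\eps(g)=1$. For $v,w$ tangent at $p$,
\[
A_{gp}(gv,gw)=\langle D_{gv}(gw),\nu(gp)\rangle=\langle g\,D_vw,\,g\nu(p)\rangle=A_p(v,w),
\]
so $g^*A=A$; in particular $|A|^2\circ g=|A|^2$. Since $g$ is an isometry of $\Sig$ mapping $\bdSig$ to itself, the change of variables $q=gp$ in \eqref{eq:S} gives $S(g\cdot f,g\cdot f)=S(f,f)$. By polarization $S$ is invariant, and therefore $\ker S$ is preserved as well.

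\emph{Part (iii), orientation.} For an oriented basis $(e_1,e_2)$ of $T_p\Sig$, the frame $(e_1,e_2,\nu(p))$ is mapped by $g$ to $(ge_1,ge_2,\nu(gp))$. This frame is positively oriented in $\R^3$ exactly when $\det g=1$. Hence $g$ preserves the orientation of $\Sig$ determined by $\nu$ if and only if $\det g=1$.

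\emph{Part (iii), the formulas for $\nu_a$ and $R_a$.} Using $\nu(g^{-1}q)=g^{-1}\nu(q)$ and the orthogonality of $g$,
\[
(g\cdot\nu_a)(q)=\langle a,g^{-1}\nu(q)\rangle=\langle ga,\nu(q)\rangle=\nu_{ga}(q).
\]
For $R_a$, the same substitution gives $(g\cdot R_a)(q)=\langle g(a\times g^{-1}q),\nu(q)\rangle$. Applying the identity $g(u\times v)=\det(g)\,(gu\times gv)$ for $g\in O(3)$ yields $g\cdot R_a=\det(g)R_{ga}$.

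\emph{Main obstacle.} The only non-routine step is the local coincidence argument in (ii). It needs embeddedness, so that a single sheet through $q$ is mapped to itself by $g$, and then analytic continuation to pass from local to global. Everything else is direct computation.
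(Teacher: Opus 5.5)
Your proof is correct and follows the paper's argument essentially step by step. It uses the same connectedness argument for the constant sign in (i), the same crossing, local-graph and analyticity argument in (ii), and the same direct computations for $g^*A$, orientation, $\nu_a$ and $R_a$ in (iii); the only difference is that in (ii) you argue by contradiction from $\eps(g)=-1$, whereas the paper first rules out tangency to $P$ and then reads off $\nu(p)\in P$, which is the same argument in contrapositive form.
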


\begin{proof}
	(i) Since $dg$ maps $T_x\Sig$ onto $T_{gx}\Sig$, we have $\nu(gx)=\pm g\nu(x)$ with a locally constant sign. As $\Sig$ is
connected, the sign is constant. The homomorphism property is clear.

(ii) Let $g$ be the reflection in a plane $P$. Since $g$ exchanges the two sides of $P$ and $\Sig^\circ$ is connected,
$\Sig^\circ\cap P\ne\emptyset$. If $\Sig$ were tangent to $P$ at a point of $\Sig^\circ\cap P$, then near that point $\Sig$
would be the graph over $P$ of a function $w$ with $w=-w$; by analyticity, $\Sig$ would be contained in $P$ and, hence,
be a flat disc. Therefore, at every $p\in\Sig^\circ\cap P$, the tangent plane $T_p\Sig$ is a $g$-invariant plane
different from $P$. Such a plane contains the normal of $P$; thus, $\nu(p)\in P$ and $\nu(gp)=\nu(p)=g\nu(p)$. That is,
$\eps(g)=1$.

(iii) If $\nu\circ g=g\nu$, then $g$ restricts to an isometry of $\Sig$ with $g^*A=A$, which implies the invariance of $S$
and $\ker S$. Moreover, $(dg\,v,dg\,w,\nu\circ g)$ has the orientation of $(v,w,\nu)$ multiplied by $\det g$. Finally,
for $x\in\Sig$, we have $(g\cdot\nu_a)(x)=\langle a,g^{-1}\nu(x)\rangle=\nu_{ga}(x)$ and
\[
  (g\cdot R_a)(x)=\langle a\times g^{-1}x,g^{-1}\nu(x)\rangle=\langle g(a\times g^{-1}x),\nu(x)\rangle
  =\det(g)\langle ga\times x,\nu(x)\rangle .\qedhere
\]

\end{proof}

In a local conformal coordinate $z$ on $\Sig$, the Hopf differential $\mathfrak h=A(\partial_z,\partial_z)\,dz^2$, where
$A$ is extended complex-bilinearly, is holomorphic. It vanishes identically only if $\Sig$ is a flat disc. Its zeros in
$\Sig^\circ$ are the umbilics of $\Sig$, and the order $m_q\ge0$ of the zero at $q$ is called the \emph{order of $q$ as
an umbilic}; here $m_q=0$ if $A(q)\ne0$.

\begin{lemma}\label{lem:hopf}
Suppose that $\Sig$ is not a flat disc. Let $\rho$ be a rotation of order $m\ge3$ about the line $\R a$, $|a|=1$, such that
$\rho(\Sig)=\Sig$ and $\nu\circ\rho=\rho\nu$. If $q\in\Sig^\circ$ is fixed by $\rho$, then $T_q\Sig=a^\perp$,
$\nu(q)=\pm a$, $X(q)\in\R a$, and $m_q\ge m-2$.
\end{lemma}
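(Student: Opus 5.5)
We prove the four assertions in the order stated, using only that $\rho$ is an isometry of $\R^3$ preserving $\Sig$ and intertwining $\nu$.

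\emph{Tangent plane, normal and position.} Since $\rho(q)=q$ and $\rho(\Sig)=\Sig$, the differential $d\rho_q=\rho$ maps $T_q\Sig$ onto itself. The hypothesis $\nu\circ\rho=\rho\nu$ gives $\rho\,\nu(q)=\nu(q)$. The fixed vectors of a rotation of order $m\ge3$ about $\R a$ are exactly $\R a$, so $\nu(q)=\pm a$ and $T_q\Sig=\nu(q)^\perp=a^\perp$. Likewise $\rho X(q)=X(q)$, since $q$ is a fixed point, and hence $X(q)\in\R a$.

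\emph{Local model.} Choose a local conformal coordinate $z$ centred at $q$ compatible with the orientation given by $\nu$. Take $z$ to be the complex coordinate on $a^\perp$ induced by the orthogonal projection, which is a diffeomorphism near $q$, and then straighten it conformally so that $z(q)=0$. Because $\rho$ preserves $\nu$, it preserves the orientation of $\Sig$ near $q$, so $\rho$ acts near $q$ as a holomorphic automorphism fixing $0$. Its derivative at $0$ is the rotation $d\rho_q|_{a^\perp}$, which is multiplication by $\omega=e^{\pm2\pi i/m}$. The sign depends only on the orientation of $\nu(q)$ relative to $a$.

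\emph{Linearising $\rho$.} A holomorphic map with $\rho^m=\mathrm{id}$ and derivative a primitive $m$-th root of unity is holomorphically linearizable. Concretely, set
\[
w=\frac1m\sum_{j=0}^{m-1}\omega^{-j}\,(\rho^j)^*z .
\]
Then $w$ is a coordinate near $q$ with $w\circ\rho=\omega w$. It is still a conformal coordinate on $\Sig$, since it is a holomorphic function of $z$ with nonzero derivative at $0$. Since $\mathfrak h$ is independent of the coordinate, we may write $\mathfrak h=h(w)\,dw^2$ with $h$ holomorphic.

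\emph{Invariance of $\mathfrak h$.} Since $\rho$ is an ambient isometry with $\nu\circ\rho=\rho\nu$, Lemma~\ref{lem:sym}(iii) gives $\rho^*A=A$, and therefore $\rho^*\mathfrak h=\mathfrak h$. In the coordinate $w$ this reads
\[
h(\omega w)\,\omega^2\,dw^2=h(w)\,dw^2 .
\]
Expanding $h(w)=\sum_k c_kw^k$ and comparing coefficients gives $c_k\,\omega^{k+2}=c_k$ for every $k$. Thus $c_k\ne0$ forces $k\equiv-2\pmod m$.

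\emph{Conclusion.} Among $k\ge0$, the smallest value with $k\equiv-2\pmod m$ is $k=m-2$, because $m\ge3$ makes $m-2\ge1$ and in particular excludes $k=0$. Hence $c_0=\dots=c_{m-3}=0$, and the order of vanishing of $h$ at $0$ is at least $m-2$, that is, $m_q\ge m-2$. We use that $\Sig$ is not a flat disc only to ensure $\mathfrak h\not\equiv0$, so that $m_q$ is a finite integer; the bound itself holds in any case.

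The only delicate points are the orientation compatibility, which guarantees that $\rho$ acts holomorphically rather than antiholomorphically, and the linearization of $\rho$. Both are handled above.
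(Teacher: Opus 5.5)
Your proof is correct. For the first three assertions you argue essentially as the paper does, with one small variation. The paper gets $T_q\Sig=a^\perp$ from the fact that $T_q\Sig$ is a $\rho$-invariant plane and $m\ge3$. You instead read off $\nu(q)=\pm a$ directly from $\nu(q)=\nu(\rho q)=\rho\,\nu(q)$, which uses the equivariance of $\nu$ rather than $m\ge3$.

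The genuine difference is in the bound $m_q\ge m-2$. The paper does not prove it and simply cites \cite[Lemma~E.2]{CSW25c}. You give a self-contained argument:
\begin{itemize}
\item $\rho$ acts holomorphically near $q$, because $\det\rho=1$ and $\nu\circ\rho=\rho\nu$;
\item the averaged coordinate $w$ linearizes it to $w\mapsto\omega w$, with $w'(0)=1$;
\item $\rho^*\mathfrak h=\mathfrak h$, since $\rho^*A=A$ and a holomorphic map preserves the $(2,0)$-part;
\item this forces $c_k(\omega^{k+2}-1)=0$ for every coefficient $c_k$.
\end{itemize}
This makes the lemma independent of the reference and shows exactly where each hypothesis enters. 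Orientation preservation gives holomorphy rather than antiholomorphy, equivariance of $\nu$ gives invariance of $A$ rather than a sign change, and $m\ge3$ is what makes the resulting bound nontrivial.

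Two cosmetic points. First, a rotation of order $m$ may have angle $2\pi k/m$ with $\gcd(k,m)=1$, so $\omega$ need not be $e^{\pm2\pi i/m}$; only its primitivity is used, so nothing changes. Second, the detour through the orthogonal projection to $a^\perp$ is unnecessary, since that coordinate is not conformal. Any oriented isothermal coordinate centered at $q$ suffices, defined on a $\rho$-invariant neighborhood such as a small geodesic disc, so that all $z\circ\rho^j$ are defined.
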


\begin{proof} 
		Since $\rho(q)=q$, the point $q$ lies on $\R a$, and $T_q\Sig$ is a $\rho$-invariant plane; as $m\ge3$, it is $a^\perp$.
		Hence $\nu(q)=\pm a$. The bound $m_q\ge m-2$ is \cite[Lemma~E.2]{CSW25c}.
\end{proof}
\subsection{Representations}\label{ssec:reps}
Here we review basic facts from the representation theory of finite groups. The main reference is \cite{Ser77}.

Let $H$ be a finite group. A (real) \emph{representation} of $H$, or an \emph{$H$-module}, is a finite-dimensional real
vector space $V$ together with a linear action of $H$ (equivalently, a homomorphism $H\to GL(V)$); as usual, we denote the representation and the space by the same
letter. Two representations are isomorphic if there is an $H$-equivariant linear isomorphism between them. The
\emph{character} of $V$ is the function on $H$ given by
\[
  \chi_V(\gamma)=\tr(\gamma|_V).
\]
It is constant on conjugacy classes and satisfies $\chi_V(1)=\dim V$. Furthermore, the characters of $V\oplus W$ and
$V\otimes W$ are $\chi_V+\chi_W$ and $\chi_V\chi_W$, respectively \cite[\S2.1]{Ser77}. If $\chi$ is a one-dimensional representation,
then $V\otimes\chi$ is the space $V$ on which $\gamma$ acts by $\chi(\gamma)\gamma$.

A representation $\pi$ is called
\emph{irreducible} if $\pi\ne0$ and its only invariant subspaces are $0$ and $\pi$, and \emph{absolutely irreducible} if it
remains irreducible after complexification. It turns out that all irreducible representations of $D_b$ and of $\Dbh$,
which are described below, are absolutely irreducible. Thus, the results of \cite{Ser77}, which are formulated over $\C$,
apply to them. Every representation is a direct sum of irreducible ones, and we denote by $m_\pi(V)$ the
multiplicity of an irreducible representation $\pi$ in $V$. For
absolutely irreducible representations $\pi$ and $\pi'$ of $H$, the following facts are well known:
\begin{enumerate}[label=(\alph*),itemsep=2pt]
	\item \emph{Schur's lemma.} If $\pi\not\cong\pi'$, then an $H$-equivariant linear map $\pi\to\pi'$ vanishes
	\cite[\S2.2]{Ser77}. 
	\item \emph{Multiplicities.} By \cite[\S2.3, Thm.~4]{Ser77}, for every representation $V$ of $H$,
	\[
	  m_\pi(V)=\frac1{|H|}\sum_{\gamma\in H}\chi_V(\gamma)\chi_\pi(\gamma).
	\]
	In particular, if all irreducible representations of $H$ are absolutely irreducible, then two representations of $H$
	with the same character are isomorphic.
	\item \emph{Permutation representations.} Suppose that $H$ acts on a finite set $Y$. Then it acts on the space of real
	functions $\phi$ on $Y$ by $(\gamma\cdot\phi)(y)=\phi(\gamma^{-1}y)$. Since $\gamma$ permutes the indicator functions of
	the points of $Y$, the character of this representation at $\gamma$ is the number of points of $Y$ fixed by $\gamma$.
	For $Y=H$, on which $H$ acts by left multiplication, we obtain the \emph{regular representation}. Its character is
	$|H|$ at the identity and $0$ elsewhere, since an element $\gamma\ne1$ fixes no point of $H$. Therefore, by (b), it
	contains every absolutely irreducible representation $\pi$ with multiplicity $\dim\pi$ \cite[\S2.4]{Ser77}.
\end{enumerate}

\emph{The dihedral group.} The group $D_b$ has order $2b$; it consists of the rotations $\Rot^j$, $0\le j<b$, and the $b$
reflections in the vertical mirror planes which all share an axis $\ell$ and are equally spaced at angles $\pi/b$. Its irreducible representations are listed in Table~\ref{tab:chars} (see
\cite[\S5.3]{Ser77}). Precisely, they are the trivial representation $A_1$; the representation $A_2$ in which $\Rot$ acts
trivially and the reflections act by $-1$; for even $b$, two further one-dimensional representations $B_1$ and $B_2$ in
which $\Rot$ acts by $-1$; and the two-dimensional representations $E_k$, $1\le k<b/2$, in which $\Rot$ acts as the
rotation by $2\pi k/b$ and the reflections act as reflections. 
We denote the regular
representation of $D_b$ by $\cR_b$; it has dimension $2b$. By (b), (c), and Table~\ref{tab:chars}, we have
\begin{align*}
\cR_b &\cong A_1\oplus A_2\oplus\bigoplus_{1\le k<b/2}E_k^{\oplus2}\quad(b\text{ odd}),\\
\cR_b &\cong A_1\oplus A_2\oplus B_1\oplus B_2\oplus\bigoplus_{1\le k<b/2}E_k^{\oplus2}\quad(b\text{ even}),\\
\cR_b &\cong A_2\otimes\cR_b \qquad\text{and}\qquad E_1 \cong A_2\otimes E_1.
\end{align*}

\begin{table}[ht]
	\centering
	\begin{tabular}{lccc}
		\hline
		$\pi$ & $\dim\pi$ & $\chi_\pi(\Rot^j)$ & $\chi_\pi(s)$, $s$ a reflection\\
		\hline
		$A_1$ & $1$ & $1$ & $1$\\
		$A_2$ & $1$ & $1$ & $-1$\\
		$B_1$, $B_2$ ($b$ even) & $1$ & $(-1)^j$ & $\pm1$\\
		$E_k$, $1\le k<b/2$ & $2$ & $2\cos(2\pi kj/b)$ & $0$\\
		\hline
	\end{tabular}
	\medskip
	\caption{The characters of the irreducible representations of $D_b$. For even $b$, the reflections form two conjugacy
		classes, on which $B_1$ and $B_2$ take the values $1$ and $-1$ in the two possible ways.}
	\label{tab:chars}
\end{table}

\emph{The prismatic group.} For a representation $\pi$ of $D_b$, we denote by $\pi^\pm$ the
representation of $\Dbh=D_b\times\langle\sigma_h\rangle$ which restricts to $\pi$ on $D_b$ and on which $\sigma_h$ acts by
$\pm1$. That is, $\pi^+$ and $\pi^-$ are the $\sigma_h$-even and $\sigma_h$-odd versions of $\pi$. 
Since $\sigma_h$ commutes with every element of $D_b$, the irreducible representations of $\Dbh$ are the $\pi^\pm$ with $\pi$
irreducible \cite[\S3.2]{Ser77}, and they are absolutely irreducible. For instance, the defining (also called standard) representation of $\Dbh$ on $\R^3$, given by the inclusion
$\Dbh\subset O(3)$, is
\[
  \R^3\cong E_1^+\oplus A_1^- .
\]
Indeed, the group $D_b$ acts on the horizontal plane $\ell^\perp$ through $E_1$ and fixes the vertical axis $\ell$
pointwise, while $\sigma_h$ fixes $\ell^\perp$ pointwise and reverses $\ell$. We write $\det$ for the
one-dimensional representation $\gamma\mapsto\det\gamma\in\{\pm1\}$ of $\Dbh$. The rotations about $\ell$ have
determinant $1$, and the reflections, including $\sigma_h$, have determinant $-1$; hence, $\det\cong A_2^-$. Similarly, the rotation fields $a\times p$ on $\Sph$ satisfy $\gamma_*(a\times p)=\det(\gamma)\,\gamma a\times p$, so that
$a\mapsto a\times p$ identifies them with
\[
  \R^3\otimes\det\cong(E_1\otimes A_2)^-\oplus(A_1\otimes A_2)^+\cong E_1^-\oplus A_2^+.
\]

\emph{Isotypic parts and types.} For an irreducible representation $\pi$ of $H$, the \emph{$\pi$-isotypic part} of $V$
is the sum of all subrepresentations of $V$ isomorphic to $\pi$. Every representation is the direct
sum of its isotypic parts \cite[\S2.6]{Ser77}. We say that a subrepresentation or an element of $V$ \emph{has type
	$\pi$} if it lies in the $\pi$-isotypic part. If $\pi$ is one-dimensional, with character $\chi_\pi$, then an element
$v$ has type $\pi$ exactly when $\gamma\cdot v=\chi_\pi(\gamma)v$ for all $\gamma\in H$, that is, when $\R v$ is invariant
and $H$ acts on it through $\pi$. For instance, a function $f$ on $\Sig$ has type $A_2^+$ if $\Rot\cdot f=f$, $s\cdot f=-f$
for every vertical reflection $s$, and $\sigma_h\cdot f=f$.

\emph{The boundary components.} For $\Sig\in\cS_b$, let $\cP_b$ be the permutation representation of $D_b$ on the set of
boundary components of $\Sig$. 
By (c), its character at $\gamma$ is the number of boundary components fixed by $\gamma$. Since $\Rot$ permutes them
cyclically, $D_b$ acts transitively, and the stabilizer of a boundary component has order $2b/b=2$; it consists of the identity and one reflection. Hence, the
character of $\cP_b$ is $b$ at the identity and vanishes on $\Rot^j$ for $0<j<b$. 

Moreover, if a reflection $s$ preserves
a boundary component $\Gamma$, then $s\Rot^ms=\Rot^{-m}$ shows that $s$ preserves $\Rot^m\Gamma$ exactly when
$\Rot^{2m}\Gamma=\Gamma$, that is, when $b$ divides $2m$. Consequently, for odd $b$, every reflection preserves exactly one
boundary component. For even $b$, the reflections preserving a boundary component form one conjugacy class of $b/2$
reflections, each of which preserves exactly two boundary components, while the other $b/2$ reflections preserve none;
in this case, we label $B_1$ and $B_2$ so that $B_1$ takes the value $1$ on the first class. In both cases, since characters are constant
on conjugacy classes, the sum of $\chi_{\cP_b}\chi_\pi$ over the reflections equals $b\,\chi_\pi(s)$, where $s$ is a
reflection preserving a boundary component. Hence, (b) gives
\[
  m_\pi(\cP_b)=\frac1{2b}\bigl(b\dim\pi+b\,\chi_\pi(s)\bigr)=\frac12\bigl(\dim\pi+\chi_\pi(s)\bigr),
\]
and Table~\ref{tab:chars} yields
\begin{align*}
\cP_b &\cong A_1\oplus\bigoplus_{1\le k<b/2}E_k\quad(b\text{ odd}),\\
\cP_b &\cong A_1\oplus B_1\oplus\bigoplus_{1\le k<b/2}E_k\quad(b\text{ even}).
\end{align*}
In particular,
\begin{equation}\label{eq:Pb}
	m_{A_1}(\cP_b)=1\qquad\text{and}\qquad m_{E_1}(\cP_b)=1.
\end{equation}

\subsection{Symmetry types and nodal sets}\label{ssec:types}
In this subsection, we recall how a group of symmetries decomposes the index. Let $G$ be a finite group acting
orthogonally on $L^2(\Sig)$ and preserving $H^1(\Sig)$ and $S$; by Lemma~\ref{lem:sym}(iii), this holds for every
finite group of symmetries $g$ of $\Sig$ with $\nu\circ g=g\nu$, acting by $g\cdot f=f\circ g^{-1}$. 

Let $\mathcal N$
be the negative eigenspace of $S$, that is, the span of the eigenfunctions with negative eigenvalues; it is
$G$-invariant. 
By the discussion in \S\ref{ssec:reps}, for $\ind_\pi(S):=m_\pi(\mathcal N)$ and $\operatorname{null}_\pi(S)=m_\pi(\ker S)$, we have
\[\Ind(\Sig)=\dim\mathcal N=\sum_\pi\ind_\pi(S)\dim\pi \text{ and } \Null(\Sig)=\sum_\pi\operatorname{null}_\pi(S)\dim\pi.\]

The following observation will be used repeatedly to produce negative directions; compare \cite[Lemma 12]{MR91}  and \cite[Prop. 3.1(i)]{CSW25a}.

\begin{lemma}\label{lem:iso}
Let $V\subset H^1(\Sig)$ be a finite-dimensional $G$-invariant subspace on which $S\le0$. Then, for every irreducible
representation $\pi$ of $G$,
\[
  \ind_\pi(S)\ge m_\pi(V)-m_\pi(V\cap\ker S).
\]
In particular, $\Ind(\Sig)\ge\dim V-\dim(V\cap\ker S)$.
\end{lemma}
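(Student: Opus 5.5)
The plan is to decompose $V$ orthogonally with respect to $S$ and count by type. Let $\mathcal N$ be the negative eigenspace of $S$, $K=\ker S$, and $\mathcal P$ the closure of the span of eigenfunctions with positive eigenvalues. All three are $G$-invariant because $G$ acts orthogonally on $L^2(\Sig)$ and commutes with $S$. Hence the $L^2$-orthogonal projections onto them are $G$-equivariant.

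Consider the projection $P_{\mathcal N}:V\to\mathcal N$ along $K\oplus\mathcal P$, which is equivariant. Its kernel is $V\cap(K\oplus\mathcal P)$. For $f=k+p$ in this kernel we have $S(f,f)=S(p,p)\ge0$. Combined with $S\le0$ on $V$, this gives $S(p,p)=0$. Since $S$ is positive definite on $\mathcal P$ (its eigenvalues there are positive and discrete), we get $p=0$. So the kernel of $P_{\mathcal N}|_V$ is exactly $V\cap K$.

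Therefore $V/(V\cap K)$ embeds equivariantly into $\mathcal N$. By complete reducibility and the additivity of multiplicities, $m_\pi(V)-m_\pi(V\cap K)=m_\pi(V/(V\cap K))\le m_\pi(\mathcal N)=\ind_\pi(S)$. Here I use that $V\cap K$ is a $G$-invariant subspace, so it has an invariant complement isomorphic to the quotient. Summing over $\pi$ with weights $\dim\pi$ gives the dimension statement.

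The only delicate point is justifying that $S(p,p)>0$ for nonzero $p\in\mathcal P\cap H^1$. This follows from the spectral decomposition of $S$ relative to $L^2$: $S(p,p)=\sum\lambda_i|\langle p,\phi_i\rangle|^2$ with all $\lambda_i>0$. Also, $f\in H^1$ splits into components in $\mathcal N$, $K$, $\mathcal P$ with $S$ diagonal across them. Both facts follow from the discreteness of the spectrum stated above.
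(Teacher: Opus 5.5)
Your proof is correct and is essentially the paper's argument: both project $V$ $G$-equivariantly onto the negative eigenspace $\mathcal N$ along its $L^2$-complement, show the kernel lies in $V\cap\ker S$, and compare multiplicities. The only difference is how the kernel is identified. The paper notes that a vector with $S(v,v)=0$ minimizes the nonnegative form $S$ on $\mathcal N^\perp$ and hence is $S$-orthogonal to it, whereas you split $\mathcal N^\perp$ further into $\ker S\oplus\mathcal P$ and use positive definiteness of $S$ on $\mathcal P$.
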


\begin{proof}
Let $\mathcal N^\perp$ be the $L^2$-orthogonal complement of $\mathcal N$ in $H^1(\Sig)$. It is $G$-invariant,
$S(\mathcal N,\mathcal N^\perp)=0$, and $S\ge0$ on $\mathcal N^\perp$. Let $p:V\to\mathcal N$ be the projection along
$\mathcal N^\perp$; it is $G$-equivariant. We claim that $\ker p\subset V\cap\ker S$. 

Indeed, let $v\in\ker p$. Then
$v\in\mathcal N^\perp$ and, thus, $S(v,v)\ge0$. On the other hand, $S(v,v)\le0$ as $v\in V$. Hence, $v$ minimizes $S$ on
$\mathcal N^\perp$, which implies $S(v,\mathcal N^\perp)=0$. Together with $S(v,\mathcal N)=0$, it gives $v\in\ker S$.
Consequently,
\[
  m_\pi(V)-m_\pi(V\cap\ker S)\le m_\pi(V)-m_\pi(\ker p)=m_\pi(p(V))\le m_\pi(\mathcal N).\qedhere
\]
\end{proof}

\emph{Nodal sets.} We will use the following local facts for a solution $u\not\equiv0$ of $\Delta u+cu=0$, with $c$ smooth, on a connected
open subset of $\Sig^\circ$. First, $u$ cannot vanish on an open set, nor to infinite order at a point \cite{Aro57}.
Second, if $u(p)=0$, then, in a conformal coordinate $z$ centered at $p$,
\[
  u=\operatorname{Re}(\alpha z^j)+O(|z|^{j+1})
\]
for some $j\ge1$ and $\alpha\in\C\setminus\{0\}$ \cite{Bers55}. Third, near $p$, the zero set of $u$ consists of $2j$ arcs
issuing from $p$ which are pairwise disjoint away from $p$ and tangent at $p$ to the $2j$ rays on which
$\operatorname{Re}(\alpha z^j)=0$, and $u$ has alternating signs in the $2j$ sectors between consecutive arcs
\cite{Cheng76}.
\begin{lemma}\label{lem:pieces}
	Let $u\in\ker S$, and let $\Om_1,\dots,\Om_N\subset\Sig$ be pairwise disjoint relatively open sets whose union has full
	measure. Suppose that $u$ vanishes on the boundary of each $\Om_i$ relative to $\Sig$ and that $u\not\equiv0$ on each
	$\Om_i$. Then the functions $u_i=u\,\mathbf 1_{\Om_i}$ lie in $H^1(\Sig)$, $S(u_i,u_k)=0$ for all $i,k$, and the span $V$
	of $u_1,\dots,u_N$ satisfies $V\cap\ker S=\R u$. In particular, $\Ind(\Sig)\ge N-1$.
\end{lemma}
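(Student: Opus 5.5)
First I show $u_i\in H^1(\Sig)$. Since $u$ is a Jacobi field, it is smooth (indeed real-analytic) up to $\bdSig$. Take the piecewise function equal to $u$ on $\Om_i$ and $0$ elsewhere. It is Lipschitz on $\Sig$, because $u$ is Lipschitz and vanishes on $\partial\Om_i$ relative to $\Sig$. Hence $u_i\in H^1(\Sig)$ with $\nabla u_i=\mathbf 1_{\Om_i}\nabla u$ a.e. For a cleaner version, I approximate by $(u-\eps)_+-(u+\eps)_-$ restricted to $\Om_i$ and let $\eps\to0$.

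Next I compute $S(u_i,u_k)$. For $i\ne k$ the supports are disjoint up to a null set. The integrand of \eqref{eq:S} therefore vanishes a.e., and so does the boundary integrand on $\bdSig$. For the boundary term, I note that $u_iu_k=0$ pointwise, since at a common boundary point $u=0$. So $S(u_i,u_k)=0$ for $i\ne k$.

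For $i=k$, I use \eqref{eq:Sparts} with $f=u_i$ and $h=u$:
\[
S(u_i,u)=-\int_\Sig u_iJu\dA+\int_{\bdSig}u_i(\partial_\eta u-u)\ds=0,
\]
since $Ju=0$ and $\partial_\eta u=u$ on $\bdSig$. Writing $u=\sum_k u_k$ a.e. (the union has full measure, so this holds in $H^1$), this gives $0=S(u_i,u)=\sum_kS(u_i,u_k)=S(u_i,u_i)$. Thus $S\equiv0$ on $V$.

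Now the linear algebra. The $u_i$ have disjoint supports and each is nonzero, so they are linearly independent and $\dim V=N$. Clearly $u=\sum u_i\in V\cap\ker S$. Conversely, let $v=\sum c_iu_i\in\ker S$. Then $v$ solves $Jv=0$ in $\Sig^\circ$, and $v-c_iu$ vanishes on the open set $\Om_i\cap\Sig^\circ$. This set is nonempty, since $u\not\equiv0$ on $\Om_i$ and $\bdSig$ has measure zero.

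By unique continuation \cite{Aro57} on the connected set $\Sig^\circ$, we get $v=c_iu$ on all of $\Sig$. Since $u\not\equiv0$ on $\Om_k$, comparing on each $\Om_k$ gives $c_k=c_i$ for all $k$. Hence $v\in\R u$, and $V\cap\ker S=\R u$.

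Finally, I apply Lemma~\ref{lem:iso} with $G$ trivial. This gives $\Ind(\Sig)\ge\dim V-\dim(V\cap\ker S)=N-1$.

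The only delicate point is the $H^1$ membership when $\partial\Om_i$ is irregular. The Lipschitz argument handles it, because we only need $u$ continuous and vanishing on the relative boundary.
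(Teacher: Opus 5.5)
Your proposal is correct and follows the paper's proof step by step: $H^1$ membership of the $u_i$, $S(u_i,u_k)=0$ for $i\ne k$ from disjoint supports, $S(u_i,u_i)=S(u,u_i)-\sum_{k\ne i}S(u_k,u_i)=0$, unique continuation from $\Om_i\cap\Sig^\circ$ to force all $c_i$ to be equal, and Lemma~\ref{lem:iso} to get $\Ind(\Sig)\ge N-1$. Your direct claim that the zero extension of $u|_{\Om_i}$ is already Lipschitz also holds, since a near-minimizing path from a point of $\Om_i$ to a point outside must first exit through $\partial\Om_i$, where $u=0$; this makes the paper's approximation by $\phi_\delta(u)\mathbf 1_{\Om_i}$, which you also mention, unnecessary.
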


\begin{proof}
	For $\delta>0$, let $\phi_\delta(t)=\operatorname{sign}(t)\max\{|t|-\delta,0\}$. Since $|u|<\delta$ near the relative
	boundary of $\Om_i$, the function $\phi_\delta(u)\mathbf 1_{\Om_i}$ is Lipschitz. Moreover, it converges to $u_i$ in
	$H^1(\Sig)$ as $\delta\to0$, because $\nabla u=0$ almost everywhere on $\{u=0\}$. Thus, $u_i\in H^1(\Sig)$. For $i\ne k$,
	we have $u_iu_k=0$ on $\Sig$, including on $\bdSig$, and $\langle\nabla u_i,\nabla u_k\rangle=0$ almost everywhere; hence,
	$S(u_i,u_k)=0$. Since
	$u\in\ker S$ and $u=\sum_ku_k$, it follows that
	\[
	S(u_i,u_i)=S(u,u_i)-\sum_{k\ne i}S(u_k,u_i)=0 .
	\]
	Next, suppose that $w=\sum_ic_iu_i\in\ker S$. Then $Jw=0$, and $w$ agrees with $c_1u$ on the non-empty open set
	$\Om_1\cap\Sig^\circ$. By unique continuation \cite{Aro57}, $w=c_1u$; since $u\not\equiv0$ on each $\Om_i$, all $c_i$ are
	equal. Finally, the $u_i$ are linearly independent, so $\dim V=N$, and Lemma~\ref{lem:iso} gives $\Ind(\Sig)\ge N-1$.
\end{proof}

Finally, we recall the lower bounds of Carlotto, Schulz, and Wiygul for surfaces with mirror symmetries. 
The first bound is due to \cite[Prop.~4.2]{CSW25a}; the second follows by combining parts
(b) and (c) of that proposition, as in \cite[proof of Cor.~3.8]{CSW25c}.


\begin{theorem}[Carlotto--Schulz--Wiygul]\label{thm:mirrors}
Let $\Sig\subset\Ball$ be an FBMS other than a flat disc. Suppose that $\Sig$ is invariant under
the reflections in $n\ge2$ distinct planes containing a line $\ell$ through the origin and that $\Sig$ is not
rotationally symmetric about $\ell$. Then $\Ind(\Sig)\ge2n-1$. Furthermore, if $\Sig$ is also invariant under the
reflection in the plane $\ell^\perp$, then $\Ind(\Sig)\ge2n$.
\end{theorem}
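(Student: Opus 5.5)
The plan is to run the classical nodal-domain argument (Choe, Montiel--Ros) with the Killing fields $R_a$, placed inside the equivariant framework of Lemma~\ref{lem:iso} and Lemma~\ref{lem:pieces}.

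\textbf{Setup.} By Lemma~\ref{lem:sym}(ii), every mirror reflection $g$ satisfies $\eps(g)=1$, so the reflection group acts on $H^1(\Sig)$ preserving $S$. Take $a$ a unit vector along $\ell$ and consider $u=R_a=\langle a\times X,\nu\rangle\in\ker S$ (Lemma~\ref{lem:basic}(iii)). Since $\Sig$ is not rotationally symmetric about $\ell$, we have $u\not\equiv0$. For a reflection $s$ in a plane $P\supset\ell$, we have $sa=a$ and $\det s=-1$, so Lemma~\ref{lem:sym}(iii) gives $s\cdot u=-u$. Hence $u$ vanishes on $\Sig\cap P$ for each of the $n$ planes.

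\textbf{First bound.} The $n$ planes cut $\Ball$ into $2n$ wedges. I will show that $\Sig$ meets the interior of each wedge in a set where $u\not\equiv0$. Since the wedges are permuted transitively by the group and $u$ is odd or even under its elements up to sign, it suffices to check one wedge, and this follows from unique continuation because $u\not\equiv0$. Let $\Om_1,\dots,\Om_{2n}$ be the intersections of $\Sig$ with the open wedges. Their union has full measure (away from the analytic curves $\Sig\cap P$), and $u$ vanishes on their relative boundaries. Lemma~\ref{lem:pieces} then gives $\Ind(\Sig)\ge 2n-1$.

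\textbf{Second bound.} For the refinement, I would use $G$ generated by the $n$ reflections and $\sigma_h$, and set $V=\Span\{u_i\}$. Since $\sigma_h a=-a$ and $\det\sigma_h=-1$, we get $\sigma_h\cdot u=u$. Each $\Om_i$ is $\sigma_h$-invariant, so $V$ consists of $\sigma_h$-even functions, and $V\cap\ker S=\R u$. By Lemma~\ref{lem:iso}, the even sector contributes at least $2n-1$ negative directions.

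To add one more, I would exhibit a $\sigma_h$-odd negative direction orthogonal to the even sector. The natural choice is $\nu_a$. For $a$ along $\ell$, we have $\sigma_h\cdot\nu_a=\nu_{\sigma_h a}=-\nu_a$, and $S(\nu_a,\nu_a)=-2\int\nu_a^2<0$ by Lemma~\ref{lem:basic}(iv), provided $\nu_a\not\equiv0$. If $\nu_a\equiv0$, then $\Sig$ would be a vertical cylinder piece, impossible for an FBMS; this can be checked via the free boundary condition. Since $\nu_a$ lies in a different $\sigma_h$-isotypic part, $\ind_\pi(S)\ge1$ for an odd $\pi$. Adding this to the even count gives $\Ind(\Sig)\ge2n$.

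\textbf{Main obstacle.} The delicate point is verifying that the nodal pieces $\Om_i$ are nonempty with $u\not\equiv0$ on each, and that their relative boundaries lie in $\{u=0\}$. This needs care when $\Sig\cap P$ has several components or meets $\ell$. I would handle it using the analyticity of $\Sig$ and Lemma~\ref{lem:sym}(ii).
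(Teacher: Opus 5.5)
Your proposal follows the same route the paper relies on. The paper quotes this result from Carlotto--Schulz--Wiygul and runs exactly this argument in Proposition~\ref{prop:lower}: cut the rotational Jacobi field along the mirror planes, apply Lemmas~\ref{lem:pieces} and~\ref{lem:iso} in the $\sigma_h$-even sector, then add the $\sigma_h$-odd direction $\nu_{e_3}$ via Lemma~\ref{lem:basic}(iv). Two steps, however, need repair.

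\textbf{Unequally spaced planes.} The theorem does not assume the $n$ planes are equally spaced. In that case the $2n$ wedges they cut out are not permuted by the reflections, since reflecting in one plane of the family need not map the others into the family. So your reduction to a single wedge is not valid as written. The fix is short.
\begin{itemize}
\item The rotations about $\ell$ preserving $\Sig$ form a closed subgroup of the circle. It is proper, since $\Sig$ is not rotationally symmetric about $\ell$, hence finite.
\item Therefore the $n$ reflections generate a finite dihedral group with $m\ge n$ equally spaced mirror planes. All of these are symmetries of $\Sig$ with $\eps=1$.
\item This group permutes its $2m$ chambers simply transitively. $\Sig$ meets one chamber, because it is not contained in the union of the planes, hence it meets every chamber.
\item Each of your $2n$ wedges contains a chamber, so unique continuation for $Ju=0$ gives $u\not\equiv0$ on each $\Sig\cap W_i$.
\end{itemize}
Alternatively, cut along all $m$ planes and obtain $2m-1\ge 2n-1$.

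\textbf{Nonvanishing of $\nu_a$.} $\nu_a\not\equiv0$ does not follow from the free boundary condition: a vertical flat disc is an FBMS with $\nu_a\equiv0$. It follows from the hypothesis that $\Sig$ is not a flat disc. Indeed, $\nu_a\equiv0$ means $a$ is tangent everywhere and $d\nu(a)=\nabla\nu_a=0$. Then the trace-free shape operator is degenerate, hence zero, and $\Sig$ is a flat disc.

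\textbf{Main obstacle.} The points you list here are not real difficulties. Lemma~\ref{lem:pieces} requires neither connectedness of the pieces nor any structure of $\Sig\cap P$. It only requires that $u$ vanish on the relative boundaries of the $\Om_i$, and these lie in the union of the sets $\Sig\cap P$.
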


\subsection{Circle domains}
A \emph{circle domain} is the complement $\Om\subset\Sph$ of finitely many pairwise disjoint closed round discs, which
are called its \emph{caps}. We will use Koebe's theorem \cite{Koe20}: every planar domain of finite connectivity is
conformally equivalent to a circle domain, and every conformal map between circle domains is the restriction of a
M\"obius transformation (see also \cite[Ch.~V]{Gol69}). The following is an equivariant version; compare \cite[Lemma~4.1 and \S4]{ChFS25}. 

\begin{lemma}\label{lem:koebe}
Let $\Sig$ have genus zero and $b\ge2$ boundary components, and let $G$ be a finite group of symmetries of $\Sig$ such that
$\nu\circ g=g\nu$ for all $g\in G$. Then there exist a circle domain $\Om\subset\Sph$ with $b$ caps, a diffeomorphism $F$
of $\Sig$ onto $\overline\Om$ which is conformal with respect to the induced metric of $\Sig$, and an injective
homomorphism $G\to O(3)$, $g\mapsto\hat g$, such that $\det\hat g=\det g$ and $F\circ g=\hat g\circ F$ on $\Sig$ for all
$g\in G$.
\end{lemma}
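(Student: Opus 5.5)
We will first apply Koebe's theorem to obtain some conformal diffeomorphism $F_0\colon\Sig\to\overline{\Om_0}$ onto a circle domain with $b$ caps. For each $g\in G$ we will then produce the M\"obius transformation $\hat g$. Since $\nu\circ g=g\nu$, Lemma~\ref{lem:sym}(iii) shows that $g$ restricts to an isometry of $\Sig$. It preserves the orientation exactly when $\det g=1$.

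Consequently, $F_0\circ g\circ F_0^{-1}$ is a conformal or anticonformal self-map of $\overline{\Om_0}$, according to the sign of $\det g$. In the anticonformal case we compose with the reflection $z\mapsto\bar z$ (complex conjugation on $\Sph=\C\cup\{\infty\}$). This produces a conformal map between circle domains, which by Koebe's uniqueness statement is the restriction of a M\"obius transformation. Thus $F_0\circ g\circ F_0^{-1}=\phi_g|_{\overline{\Om_0}}$ for a unique (anti-)M\"obius map $\phi_g$; uniqueness holds because $\Om_0$ is open. The resulting map $g\mapsto\phi_g$ is a homomorphism into the extended M\"obius group. It is injective because $g\mapsto g|_\Sig$ is injective: $\Sig$ is not contained in a plane, so it spans $\R^3$.

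Next, we conjugate into $O(3)$. The image $\{\phi_g\}$ is a finite subgroup of the full M\"obius group of $\Sph$, which is $\mathrm{Conf}(\Sph)\cong\mathrm{Isom}(\mathbb H^3)$. A finite group of hyperbolic isometries has a fixed point in $\mathbb H^3$, for instance the circumcenter (barycenter) of any orbit, by the Cartan fixed point theorem. Let $T$ be a M\"obius map sending that fixed point to the center of the ball model. Then each $T\phi_gT^{-1}$ fixes the origin and hence lies in $O(3)$.

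Define $F=T\circ F_0$, $\Om=T(\Om_0)$ and $\hat g=T\phi_gT^{-1}$. Then $\Om$ is again a circle domain, since M\"obius maps send round discs to round discs and $T$ maps $\Om_0$ to a domain with $b$ caps. Moreover, $F\circ g=\hat g\circ F$, and $g\mapsto\hat g$ is an injective homomorphism. Finally, $\det\hat g=1$ exactly when $\hat g$ preserves the orientation of $\Sph$, which happens exactly when $g$ preserves the orientation of $\Sig$, that is, when $\det g=1$.

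The main obstacle is the anticonformal case. We must check that Koebe's uniqueness applies after composing with a reflection, so that an anticonformal self-map of a circle domain is an anti-M\"obius map. This holds because complex conjugation maps circle domains to circle domains. A second point to verify is that the fixed-point argument applies to the full conformal group including orientation-reversing elements; it does, because these are still isometries of $\mathbb H^3$.
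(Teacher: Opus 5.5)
Your equivariance argument matches the paper's proof. You conjugate $g$ by $F_0$, compose with complex conjugation in the orientation-reversing case, and apply Koebe's uniqueness. You use non-planarity of $\Sig$ for injectivity. You then use Poincar\'e extension and a fixed point of the finite group in $\mathbb H^3$ to conjugate into $O(3)$. The anticonformal case, which you call the main obstacle, is routine; the paper handles it exactly as you do.

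The genuine gap is your first sentence. Koebe's theorem concerns the open domain, and applying it requires two things you do not supply:
\begin{enumerate}[label=\textup{(\roman*)},itemsep=2pt]
\item You must first know that $\Sig^\circ$ is conformally a planar domain of connectivity $b$. This uses genus zero: glue a disc to each boundary component and uniformize the resulting sphere.
\item Koebe then gives a conformal map of $\Sig^\circ$ onto a circle domain, but it says nothing about behavior at $\bdSig$.
\end{enumerate}
The lemma asserts that $F$ is a diffeomorphism of the closed surface $\Sig$ onto $\overline\Om$. This is used later: for instance, $F^*g_0=e^{2w}g$ with $w$ smooth up to the boundary underlies the isomorphism $\cT_\Sig\cong\cT_\Om$. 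The paper fills this in two steps.
\begin{enumerate}[label=\textup{(\alph*)},itemsep=2pt]
\item Carath\'eodory's theorem, for finitely connected domains bounded by Jordan curves, extends $F_0$ to a homeomorphism of $\Sig$ onto $\overline{\Om_0}$.
\item $\Sig$ extends real-analytically across $\bdSig$ as a minimal surface, so each $p\in\bdSig$ has a conformal chart in which $\Sig$ is a closed half-disc. After a M\"obius map sending the image boundary circle to $\R$, $F_0$ becomes holomorphic in the open half-disc, continuous up to the real segment, and real there. The Schwarz reflection principle extends it to an injective holomorphic map near $p$, whose derivative therefore does not vanish.
\end{enumerate}
Add this boundary-regularity argument; with it in place, the rest of your proof goes through as written.
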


\begin{proof}
Gluing a disc to each boundary component of $\Sig$ and uniformizing the resulting sphere, we see that $\Sig^\circ$ is
conformally equivalent to a planar domain of connectivity $b$. Thus, Koebe's theorem gives a conformal map $F_0$ of
$\Sig^\circ$ onto a circle domain. By Carath\'eodory's theorem \cite{Gol69}, which applies to finitely connected domains bounded by Jordan curves, $F_0$ extends to a homeomorphism of $\Sig$ onto the closure of that domain. 

Near $p\in\bdSig$, the surface $\Sig$ extends across $\bdSig$ as a real-analytic minimal surface; hence there is a conformal coordinate $\zeta$ in which $\Sig$ corresponds to a closed upper half-disc. Composing $F_0$ with a M\"obius transformation mapping the boundary circle through $F_0(p)$ onto the real line, we obtain a map which is holomorphic in the open half-disc, continuous up to the real segment, and real
there. By the Schwarz reflection principle, it extends to an injective holomorphic map near $\zeta=0$, whose derivative
therefore does not vanish. Thus, $F_0$ extends to a diffeomorphism of $\Sig$ onto the closure of that domain.

For $g\in G$, the map $F_0gF_0^{-1}$ is conformal if $\det g=1$ and anticonformal if $\det g=-1$ by
Lemma~\ref{lem:sym}(iii). Composing it with complex conjugation in the second case and applying the uniqueness part of
Koebe's theorem, we see that it is the restriction of a M\"obius transformation, respectively of an anti-M\"obius
transformation. These transformations form a finite group of conformal and anticonformal transformations of $\Sph$. It
is isomorphic to $G$; indeed, only the identity of $O(3)$ fixes $\Sig$ pointwise, as $\Sig$ is not planar for $b\ge2$. 

By Poincar\'e extension \cite[Ch.~3]{Bea83}, every M\"obius or anti-M\"obius transformation of $\Sph$ extends uniquely to
an isometry of hyperbolic $3$-space; thus, this group
acts on hyperbolic $3$-space by isometries. A finite group of isometries of hyperbolic space fixes a point, for instance,
the circumcenter of an orbit \cite[Cor.~II.2.8]{BH99}. Conjugating by a M\"obius transformation which moves this point to
the center of the ball, the group becomes a subgroup of the stabilizer of the center, which is $O(3)$. Since the image of a circle domain
under a M\"obius transformation is again a circle domain, this gives $F$ and $g\mapsto\hat g$.
\end{proof}

\begin{figure}[ht]
	\centering
	\begin{tikzpicture}[>=stealth]
		\def\R{2.5}
		\draw (0,0) circle (\R);
		\draw[dashed,gray] (\R,0) arc[start angle=0,end angle=180,x radius=\R,y radius=0.714];
		\draw (-\R,0) arc[start angle=180,end angle=360,x radius=\R,y radius=0.714];
		\filldraw[fill=gray!12,draw=gray,dashed] (0,0.714) ellipse (0.43 and 0.23);
		\filldraw[fill=gray!25] (-2.38,0) ellipse (0.13 and 0.43);
		\filldraw[fill=gray!25] (2.38,0) ellipse (0.13 and 0.43);
		\filldraw[fill=gray!25] (0,-0.714) ellipse (0.54 and 0.5);
		\draw[densely dashed] (0,-0.714) ellipse (0.75 and 0.71);
		\draw[->,thick] (0.82,-0.40) .. controls (1.1,-0.42) .. (1.5,-0.48);
		\draw[->,thick] (0,-1.47) -- (0,-2.05);
		\fill (0,2.40) circle (1.5pt) node[above] {$F(N)$};
		\draw[gray,very thin] (1.55,-0.48) -- (3.0,-0.48) node[right,black] {\small longitude};
		\draw[gray,very thin] (0.53,-1.21) -- (3.0,-1.21) node[right,black] {\small radius};
		\draw[gray,very thin] (0.06,-1.85) -- (3.0,-1.85) node[right,black] {\small latitude};
	\end{tikzpicture}
	\caption{The circle-domain model of a surface in $\cS_b$ for $b=4$: the sphere minus four equal round caps centered
		on the equator. A cap can change its radius, its longitude, or its latitude. $N$ is an umbilic point from Lemma \ref{lem:star}}
	\label{fig:caps}
\end{figure}

\begin{remark}\label{rem:circle}

Choe, Fraser, and Schoen \cite{ChFS25} use this model to construct FBMS of genus zero with six
boundary components in a rectangular prism, that is, a product of three intervals, which is a special case of the products of balls considered in \cite{ChFS25}. For their symmetry group, generated by the reflections in the coordinate
planes, the model is bounded by three pairs of circles centered on the coordinate axes, and its moduli space is
parametrized by the three radii \cite[\S4]{ChFS25}. To each point of the moduli space they associate a harmonic map into
the prism, and they differentiate its energy in the radii by the flow of a vector field which moves one pair of circles;
the derivative is the pairing of the stress--energy tensor with the Lie derivative of the round metric along this field
\cite[\S6]{ChFS25}. A critical point of the energy yields a conformal harmonic map and, hence, an FBMS
 \cite[Thm.~6.7]{ChFS25}.

In the comparison between area and energy in our setting
(Proposition~\ref{prop:EM}), a normal variation $f\nu$ can be completed by a tangential field to an infinitesimally
conformal variation exactly when the Teichm\"uller direction vanishes; thus these directions bound the difference
between the index of the area and that of the energy. We therefore use all motions of the circles, not only the
invariant ones. For $\Sig\in\cS_b$, the model of Lemma~\ref{lem:model} consists of $b$ equal caps centered on the equator
(Figure~\ref{fig:caps}); the motions of the caps, modulo the M\"obius motions, exhaust the Teichm\"uller directions
(Lemma~\ref{lem:psi} and Theorem~\ref{thm:surj}); and they decompose explicitly under $D_{bh}$ (Proposition~\ref{prop:teich}
and Remark~\ref{rem:caps}). The invariant motions reduce to a change of the common radius, which is the analogue of the
moduli space of \cite{ChFS25}.
\end{remark}

\section{The Gauss map and rotations}\label{sec:rot}

In this section, we prove Theorem~\ref{thm:rot}. The main negative directions are obtained by cutting a Jacobi field $R_a$ along curves on which it vanishes
(Lemma~\ref{lem:pieces}). 

\subsection{The fibers of the Gauss map}\label{ssec:fibre}
We first compute the order of vanishing of $R_a$ at a point where the normal is parallel to $a$.

\begin{lemma}\label{lem:orders}
Suppose that $\Sig$ is not a flat disc. Let $q\in\Sig^\circ$ and $a=\nu(q)$. Then $R_a$ vanishes at $q$ to order exactly
$m_q+1+\eps_q$, where $\eps_q=1$ if $X(q)\in\R a$, that is, if $q$ is a critical point of $|X|^2$ on $\Sig$, and $\eps_q=0$
otherwise.
\end{lemma}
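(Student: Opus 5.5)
We compute in a conformal coordinate $z=x+iy$ centered at $q$ in which $\Sig$ is the graph $x\mapsto(x,y,w(x,y))$ over $T_q\Sig=a^\perp$ at height $a$. We split the Jacobi field as
\[
  R_a=\langle a\times X,\nu\rangle=\langle a\times X(q),\nu\rangle+\langle a\times (X-X(q)),\nu\rangle,
\]
identify the leading order of each piece, and read off the exact order as the minimal order of a nonvanishing homogeneous term. Since $R_a$ solves $JR_a=0$, its order is a well-defined integer $j\ge1$, by the local facts recalled in \S\ref{ssec:types}.

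\emph{Step 1: the rotational part.} Write $X(q)=c\,a+p$ with $p\in a^\perp$. Near $q$ the surface is $X=X(q)+(x,y,0)+w\,a$ in coordinates adapted to $a^\perp$. The Hopf differential vanishes to order $m_q$, and $w$ is harmonic up to higher order because $\Sig$ is minimal. Hence $w=\operatorname{Re}(\beta z^{m_q+2})+O(|z|^{m_q+3})$ with $\beta\ne0$, since $A(\partial_z,\partial_z)\sim\partial_z^2w$. We then have $\nu=a-\nabla w+O(|\nabla w|^2)$. With $v=(x,y)$, the second term gives
\[
  \langle a\times(v+wa),\,a-\nabla w\rangle=-\langle a\times v,\nabla w\rangle+\text{h.o.t.}=-\partial_\theta w+\text{h.o.t.},
\]
which is the angular derivative of $w$. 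This is $\operatorname{Re}(\beta' z^{m_q+2})$ with $\beta'=-i(m_q+2)\beta\ne0$, up to sign conventions, so it has order exactly $m_q+2$.

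\emph{Step 2: the translational part.} The first term is $\langle a\times p,\nu\rangle=-\langle a\times p,\nabla w\rangle+O(|\nabla w|^2)$, since $a\times p\perp a$. If $p\ne0$, then $a\times p$ is a nonzero horizontal vector $e$, and $\partial_e w$ is a nonzero harmonic homogeneous polynomial of degree $m_q+1$, namely the real part of $(m_q+2)\beta\bar e\,z^{m_q+1}$ with $e$ viewed as a complex number. It therefore dominates, and the order is $m_q+1$. If $p=0$, meaning $X(q)\in\R a$, this term vanishes identically and the order is $m_q+2$ by Step 1. Finally, $X(q)\in\R a$ means $X(q)\perp T_q\Sig$, which is exactly the condition that $q$ is critical for $|X|^2$. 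Together these give the claimed order $m_q+1+\eps_q$.

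\emph{Main obstacle.} The delicate point is controlling the error terms. One must check that the higher-order corrections are genuinely $O(|z|^{m_q+2})$ in Step 2 and $O(|z|^{m_q+3})$ in Step 1. These corrections come from the $O(|\nabla w|^2)$ terms in $\nu$, which are $O(|z|^{2m_q+2})$ and hence fine since $m_q\ge0$, and from the non-conformality of the graph coordinates versus $z$, a quadratic distortion that does not lower the degree. One must also confirm that the leading homogeneous polynomials do not cancel, which follows from $\beta\ne0$ and the explicit forms above.
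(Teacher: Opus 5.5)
Your proof is correct and is essentially the paper's argument. The paper uses the same splitting $R_a=\nu_{a\times X(q)}+\langle a\times(X-X(q)),\nu\rangle$, shows that the rotational part vanishes to order exactly $m_q+2$ and that $\nu_c$ vanishes to order exactly $m_q+1$ for every nonzero $c\perp a$, and concludes as you do. The only difference is bookkeeping: you read the leading terms off the harmonic leading term $\operatorname{Re}(\beta z^{m_q+2})$ of the graph function $w$, while the paper reads them off the Weierstrass data via $\mathfrak g=\alpha z^{m_q+1}+O(|z|^{m_q+2})$. Two small slips do not affect the argument: the graph coordinates are not conformal, only their linear part at $q$ is, which suffices; and in Step 2 the leading term of $\partial_e w$ is $\operatorname{Re}\bigl((m_q+2)\beta e\,z^{m_q+1}\bigr)$, with $e$ rather than $\bar e$.
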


\begin{proof}
After a rotation, we may assume that $a=-e_3$. Near $q$, we use the Weierstrass representation in a conformal
coordinate $z$ centered at $q$:
\[
  X=q+\operatorname{Re}\int_0^z\Bigl(\tfrac12\mathfrak f(1-\mathfrak g^2),\tfrac i2\mathfrak f(1+\mathfrak g^2),\mathfrak f\mathfrak g\Bigr)dz,\qquad
  \nu=\frac{(2\operatorname{Re}\mathfrak g,\,2\operatorname{Im}\mathfrak g,\,|\mathfrak g|^2-1)}{1+|\mathfrak g|^2},
\]
where $\mathfrak g(0)=0$ and, after a complex rescaling of $z$, $\mathfrak f(0)=2$. The Hopf differential is a non-zero
constant multiple of $\mathfrak f\mathfrak g'\,dz^2$; thus, $\mathfrak g=\alpha z^{m+1}+O(|z|^{m+2})$ with $m=m_q$ and
$\alpha\ne0$.

We identify the horizontal plane $T_q\Sig$ with $\C$. Then the horizontal part of $X-q$ is $\bar z+O(|z|^2)$, its
vertical part is $O(|z|^2)$, the horizontal part of $\nu-a$ is $2\mathfrak g+O(|z|^{3m+3})$, and $a\times w=-iw$ for
horizontal $w$. Hence, for horizontal $c\ne0$,
\[
  \nu_c=\langle c,\nu-a\rangle=2\operatorname{Re}\bigl(\bar c\,\alpha z^{m+1}\bigr)+O(|z|^{m+2})
\]
vanishes at $q$ to order exactly $m+1$. Next, let $u_q=\langle a\times(X-q),\nu\rangle$ be the normal component of the
rotations about the normal line $q+\R a$. Since $a\times(X-q)$ is horizontal, we compute
\[
  u_q=\langle a\times(X-q),\nu-a\rangle=\operatorname{Re}\bigl(-i\bar z\cdot2\bar\alpha\bar z^{m+1}\bigr)+O(|z|^{m+3})
  =-2\operatorname{Im}\bigl(\alpha z^{m+2}\bigr)+O(|z|^{m+3}).
\]
Thus, $u_q$ vanishes at $q$ to order exactly $m+2$. Finally, $R_a=u_q+\nu_{a\times X(q)}$, where $a\times X(q)\perp a$,
and $a\times X(q)=0$ if and only if $X(q)\in\R a$. Since $\nabla|X|^2=2X^\top$, and $X^\top(q)=0$ if and only if
$X(q)\in\R\nu(q)$, the result then follows.
\end{proof}

\begin{proposition}\label{prop:fibre}
Let $\Sig$ be an FBMS of genus $g$ other than a flat disc, and let $a\in\Sph$ be such that $\Sig$
is not rotationally symmetric about $\R a$. Then
\[
  \Ind(\Sig)\ \ge\ 1-2g+\sum_{q\in\Sig^\circ,\ \nu(q)=\pm a}\bigl(m_q+\eps_q\bigr),
\]
where $\eps_q$ is as in Lemma~\ref{lem:orders}.
\end{proposition}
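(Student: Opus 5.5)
The idea is to cut the rotational Jacobi field $R_a$ along its nodal set and count the nodal domains with an Euler characteristic computation. Since $\Sig$ is not rotationally symmetric about $\R a$, $R_a\not\equiv0$. By Lemma~\ref{lem:basic}(iii), $R_a\in\ker S$, so Lemma~\ref{lem:pieces} applied to the nodal domains $\Om_1,\dots,\Om_N$ of $R_a$ gives $\Ind(\Sig)\ge N-1$. It therefore suffices to prove
\[
N\ \ge\ 2-2g+\sum_{\nu(q)=\pm a}\bigl(m_q+\eps_q\bigr).
\]

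\textbf{Step 1: the boundary.} Along $\bdSig$ we have $\eta=X$ by \eqref{eq:fb}, and $X\times\nu=\pm\tau$. Hence
\[
R_a=\langle a,X\times\nu\rangle=\pm\langle a,\tau\rangle=\pm(X_a)' ,
\]
with a sign that is constant on each boundary component $\Gamma$. Thus $R_a|_\Gamma$ is the derivative of a periodic function.

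If $R_a\equiv0$ on $\Gamma$, then also $\partial_\eta R_a=R_a=0$ there. These are zero Cauchy data for $JR_a=0$, so unique continuation (after extending $\Sig$ analytically across $\Gamma$) forces $R_a\equiv0$, a contradiction. Hence $(X_a)'$ changes sign at least twice on each $\Gamma$.

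At each sign change the local nodal structure of $R_a$ (via Schwarz reflection across the boundary, or the boundary version of the Bers/Cheng description) gives at least one nodal arc entering $\Sig^\circ$. So the number $e_b$ of nodal arc-ends on $\bdSig$ satisfies $e_b\ge2b$.

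\textbf{Step 2: the interior.} Consider the graph $Z'=\{R_a=0\}\cup\bdSig$. Its vertices are taken to be the interior zeros where $\nabla R_a=0$, together with the boundary endpoints. An interior vertex $v$ at which $R_a$ vanishes to order $k_v$ has degree $2k_v$. At the points $q$ with $\nu(q)=\pm a$ we have $k_q=m_q+1+\eps_q$ by Lemma~\ref{lem:orders}; all other vertices have $k_v\ge1$.

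Regular zeros carry no vertex, closed loops contribute $0$, and each boundary circle contributes $0$. Each boundary endpoint adds one vertex and splits one circle edge, which also contributes $0$. Counting edges as half the sum of the degrees therefore gives
\[
\chi(Z')=\sum_v(1-k_v)-\tfrac12e_b .
\]

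\textbf{Step 3: additivity.} The sets $\Om_i^\circ$ are the components of $\Sig\setminus Z'$. Additivity of the compactly supported Euler characteristic (which equals $\chi$ for open surfaces) gives $\chi(\Sig)=\chi(Z')+\sum_i\chi(\Om_i)$. Each $\Om_i$ is a connected noncompact surface, so $\chi(\Om_i)\le1$. Combining this with $\chi(\Sig)=2-2g-b$ and $e_b\ge2b$ yields
\[
N\ \ge\ 2-2g-b+\sum_v(k_v-1)+\tfrac12e_b\ \ge\ 2-2g+\sum_{\nu(q)=\pm a}(m_q+\eps_q),
\]
since every term $k_v-1$ is nonnegative. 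This is the required bound.

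\textbf{Main obstacle.} I expect the delicate part to be Step~1. One has to justify that every boundary sign change really emits an interior nodal arc, and that the nodal set is a finite graph to which the Euler characteristic count applies. This includes ruling out accumulation of zeros at the boundary, which I would handle by real-analyticity of $\Sig$ up to $\bdSig$ and by reflecting $R_a$ across $\bdSig$.
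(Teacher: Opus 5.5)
Your proposal is correct and follows the paper's proof essentially step by step. You cut $R_a$ along its nodal set via Lemma~\ref{lem:pieces}, get at least two boundary zeros per component from $R_a=\pm(X_a)'$, and count the faces of the graph $\{R_a=0\}\cup\bdSig$ by Euler characteristic using the orders from Lemma~\ref{lem:orders}. The paper caps off the $b$ boundary circles with discs and counts faces on the closed surface $\hat\Sig$, while you use additivity of $\chi_c$ on $\Sig$ itself; this is cosmetic and yields the identical inequality.

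For the step you flag as delicate, the paper extends $\Sig$ analytically across $\bdSig$. It then notes that the condition $\partial_\eta R_a=R_a$ forces the leading coefficient in $R_a=\operatorname{Re}(\alpha z^j)+O(|z|^{j+1})$ to be real, so exactly $j$ nodal arcs enter $\Sig^\circ$ at a boundary zero of order $j$.
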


\begin{proof}
First, $R_a\not\equiv0$; otherwise, the Killing field of the rotations about $\R a$ would be tangent to $\Sig$, and these
rotations would preserve $\Sig$. Let $Z=R_a^{-1}(0)$, and let $N$ be the number of components of $\Sig\setminus Z$, the
nodal domains of $R_a$. By Lemma~\ref{lem:basic}(iii) and Lemma~\ref{lem:pieces}, $\Ind(\Sig)\ge N-1$. Thus, it suffices
to bound $N$ from below.

\emph{Step 1: the local structure of the zero set.} With the orientation $\tau=\nu\times\eta$ of $\bdSig$, we have $X\times\nu=-\tau$
on $\bdSig$. Hence, $R_a=-\langle a,\tau\rangle$ there; that is, on each boundary component, $R_a$ is minus the derivative
of $X_a$ with respect to arclength. Therefore, $R_a$ vanishes at least twice on each boundary component. Moreover, it does
not vanish on any arc of $\bdSig$; otherwise, $R_a$ and $\partial_\eta R_a=R_a$ would vanish there, and $R_a\equiv0$ by
unique continuation. Since $\Sig$ is real-analytic up to $\bdSig$, it extends across $\bdSig$ to a slightly larger
minimal surface on which $R_a$ still satisfies $JR_a=0$. Thus, the local description of nodal sets in \S\ref{ssec:types} also applies
at the points of $\bdSig$.

Let $p\in\bdSig$ be a zero of $R_a$ of order $j$, and choose a conformal coordinate $z$ centered at $p$ in which $\bdSig$
is the real axis and $\Sig$ lies in the closed upper half plane. Then $R_a=\operatorname{Re}(\alpha z^j)+O(|z|^{j+1})$, and
$\alpha$ is real because $\partial_\eta R_a=R_a$ vanishes to order $j$ at $p$. The zero set of $\operatorname{Re}(z^j)$
consists of $2j$ rays, none of which is horizontal, and $j$ of them lie in the upper half plane. Consequently, near $p$,
the set $Z$ consists of $j$ arcs entering $\Sig^\circ$. Similarly, at an interior zero of order $j$, the set $Z$ consists
of $2j$ arcs. By compactness, $Z$ has finitely many zeros of order at least two in $\Sig^\circ$ and finitely many points
on $\bdSig$.

\emph{Step 2: counting.} Let $b$ be the number of boundary components of $\Sig$, and let $\hat\Sig$ be the closed surface
of genus $g$ obtained by gluing a disc to each of them. We consider the finite graph $\mathcal Z=Z\cup\bdSig$ in $\hat\Sig$ whose vertices are the interior
zeros of order at least two, the boundary zeros, and one additional vertex on each closed component of $Z$ without zeros of
order at least two. The faces of $\mathcal Z$ are the $N$ nodal domains and the $b$ discs. Each face $F$ is a connected
open surface of finite type; thus, $\chi(F)\le1$, and
\[
  N+b\ \ge\ \sum_F\chi(F)=\chi(\hat\Sig)-\chi(\mathcal Z)=2-2g+E-V ,
\]
where $V$ and $E$ denote the numbers of vertices and edges. Moreover, $E-V=\sum_v\bigl(\frac12\deg v-1\bigr)$. An interior
vertex which is a zero of order $j$ has degree $2j$ and contributes $j-1$. A boundary vertex with $j$ arcs entering
$\Sig^\circ$ has degree $j+2$ and contributes $\frac j2\ge\frac12$; there are at least $2b$ boundary vertices. The
additional vertices contribute $0$. Consequently,
\[
  N\ \ge\ 2-2g+\sum_{p\in Z\cap\Sig^\circ}\bigl(\operatorname{ord}_pR_a-1\bigr).
\]
The points $q\in\Sig^\circ$ with $\nu(q)=\pm a$ are zeros of $R_a$. By Lemma~\ref{lem:orders}, applied with $\nu(q)$ in
place of $a$ and using $R_{-a}=-R_a$, each of them contributes $m_q+\eps_q$. Therefore,
\[
  \Ind(\Sig)\ \ge\ N-1\ \ge\ 1-2g+\sum_{q\in\Sig^\circ,\ \nu(q)=\pm a}\bigl(m_q+\eps_q\bigr).\qedhere
\]
\end{proof}

\subsection{Rotational symmetry}\label{ssec:rotation}
We are now ready to prove Theorem~\ref{thm:rot}. 

\begin{proof}[Proof of Theorem~\ref{thm:rot}]
First, we verify the assertion that $\nu\circ\rho=\rho\nu$ holds automatically if $m$ is odd or if $\zeta>0$ in
$\Sig^\circ$. If $m$ is odd, then
$\eps(\rho)=\eps(\rho)^m=\eps(\rho^m)=1$ by Lemma~\ref{lem:sym}(i). If $\zeta>0$ in $\Sig^\circ$, then
$\zeta\circ\rho=\eps(\rho)\zeta$ forces $\eps(\rho)=1$.

Let $a$ be a unit vector spanning $\ell$. By Lemma~\ref{lem:koebe} applied to $G=\langle\rho\rangle$, there exist a circle
domain $\Om$, a conformal diffeomorphism $F:\Sig\to\overline\Om$, and a rotation $\hat\rho\in SO(3)$ of order $m$ such that
$F\circ\rho=\hat\rho\circ F$. All non-trivial powers of $\hat\rho$ have the same two fixed points on $\Sph$, and all other
orbits of $\hat\rho$ consist of $m$ points.

We claim that each cap of $\Om$ is either $\hat\rho$-invariant, with a fixed point of $\hat\rho$ in its interior, or belongs
to an orbit of $m$ caps. Indeed, a non-trivial rotation preserving a round disc fixes its center and has no fixed point on its
boundary circle. Suppose that a cap $D$ is mapped to itself by $\hat\rho^i\ne\mathrm{id}$. Then the center of $D$ is a fixed
point of $\hat\rho^i$ and, hence, of $\hat\rho$. Consequently, $\hat\rho(D)\cap D\ne\emptyset$ and, since the caps are
pairwise disjoint, $\hat\rho(D)=D$. By the same fact, the fixed point of $\hat\rho$ in $D$ lies in its interior. This proves
the claim.

The invariant caps correspond to the $d$ boundary components mapped to themselves by $\rho$. Thus, $d\le2$ and $m$ divides
$b-d$. Furthermore, $\hat\rho$, hence $\rho$, has exactly $2-d$ fixed points on $\Sig$, all of them in $\Sig^\circ$. If $d=2$, then the claimed bound reads $\Ind(\Sig)\ge1$, which holds since $S(1,1)=-\int_\Sig|A|^2-L<0$. Let $d\le1$. Then
$\Sig$ is not rotationally symmetric about $\ell$; otherwise, every boundary component would be invariant under $\rho$,
and $b=d\le1$. By Lemma~\ref{lem:hopf}, each of the $2-d$ fixed points $q$ of $\rho$ satisfies $\nu(q)=\pm a$,
$X(q)\in\R a$, and $m_q\ge m-2$; in particular, $\eps_q=1$. Proposition~\ref{prop:fibre} with $g=0$ gives
\[
  \Ind(\Sig)\ \ge\ 1+(2-d)(m-1).
\]
Finally, if $m$ divides $b$, then $m$ divides $d$. Since $d\le2<m$, it follows that $d=0$ and, hence, $\Ind(\Sig)\ge2m-1$.
\end{proof}


\section{M\"obius boosts and polyhedral symmetry}\label{sec:poly}

In this section, we prove Theorem~\ref{thm:poly}. Towards that end, we test the index form on the constant function, the
coordinate functions, and the normal parts of the conformal vector fields of the ball which generate the M\"obius boosts.
Let $\mathbf e=(1,X_1,X_2,X_3)^{\mathrm T}$, $V_0=\Span\{1,X_1,X_2,X_3\}$, and $\delta_0=(1,0,0,0)^{\mathrm T}$.

\begin{lemma}\label{lem:V0}
Suppose that $\Sig$ is not a flat disc. Then $S$ is negative definite on $V_0$, and its Gram matrix with respect to
$1,X_1,X_2,X_3$ is $-\mathbf G$, where
\[
  \mathbf G=\int_\Sig|A|^2\,\mathbf e\,\mathbf e^{\mathrm T}\dA+L\,\delta_0\delta_0^{\mathrm T}.
\]
\end{lemma}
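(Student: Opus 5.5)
We compute the Gram matrix directly from the integrated-by-parts form \eqref{eq:Sparts}, using the identities of Lemma~\ref{lem:basic}. Every $f\in V_0$ has the form $f=c_0+X_c$ with $(c_0,c)\in\R\times\R^3$. Since $X$ is harmonic and $\partial_\eta X=X$ on $\bdSig$ (Lemma~\ref{lem:basic}(ii)), we get
\[
  Jf=|A|^2f,\qquad \partial_\eta f-f=-c_0\ \text{ on }\bdSig .
\]
Hence, for $f=c_0+X_c$ and $h=d_0+X_d$,
\[
  S(f,h)=-\int_\Sig|A|^2fh\dA-\int_{\bdSig}f\,d_0\ds
  =-\int_\Sig|A|^2fh\dA-c_0d_0L-d_0\int_{\bdSig}X_c\ds .
\]
The last integral vanishes because $\int_{\bdSig}X\ds=0$. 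Writing $f=(c_0,c)\cdot\mathbf e$ and $h=(d_0,d)\cdot\mathbf e$, this is exactly $-(c_0,c)\,\mathbf G\,(d_0,d)^{\mathrm T}$. The resulting expression is symmetric, so nothing further needs to be checked, and the Gram matrix is $-\mathbf G$ as claimed.

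For negative definiteness, note that $\mathbf G$ is a sum of positive semidefinite matrices. Thus $\mathbf G\ge0$, and $(c_0,c)^{\mathrm T}$ lies in its kernel only if two conditions hold: $L c_0^2=0$, and $|A|^2(c_0+X_c)^2\equiv0$ on $\Sig$. Since $L>0$, the first gives $c_0=0$. For the second, $\Sig$ is not a flat disc, so its Hopf differential is not identically zero. Its zeros are therefore isolated, and $|A|^2>0$ on a dense open subset of $\Sig$. Hence $X_c\equiv0$ on an open set, and by analyticity $X_c\equiv0$ on $\Sig$.

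It remains to rule out $X_c\equiv0$ with $c\ne0$. That would place $\Sig$ in the plane $c^\perp$ through the origin, making it a flat disc, which is excluded. So $c=0$, and $\mathbf G$ is positive definite, which proves the lemma.

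The only point requiring care is this nondegeneracy step. It rests on three facts: $|A|^2$ vanishes only on a discrete set, a vanishing linear function forces planarity, and $L>0$ (or directly the term $Lc_0^2$) kills the constant direction. The Gram computation itself is routine.
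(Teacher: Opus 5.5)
Your proof is correct and follows essentially the same route as the paper. The Gram matrix comes from $\Delta X=0$, $\partial_\eta X=X$, and $\int_{\bdSig}X\ds=0$: you go through \eqref{eq:Sparts}, while the paper uses Green's identity for $X_i$, which is the same computation. Positive definiteness of $\mathbf G$ then holds because $w^{\mathrm T}\mathbf Gw=0$ forces $w_0=0$ and $X_c=0$ on the dense set $\{|A|\ne0\}$, hence planarity; continuity already suffices there, so the appeal to analyticity is harmless but unnecessary.
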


\begin{proof}
By \eqref{eq:S}, \[S(1,1)=-\int_\Sig|A|^2-L.\] 
By Lemma~\ref{lem:basic}(ii), \[\int_\Sig\langle\nabla f,\nabla X_i\rangle=\int_{\bdSig}fX_i\]
for every $f\in H^1(\Sig)$; thus, $S(f,X_i)=-\int_\Sig|A|^2fX_i$. This gives the Gram matrix. For
$w\in\R^4\setminus\{0\}$,
\[
  w^{\mathrm T}\mathbf Gw=\int_\Sig|A|^2(w^{\mathrm T}\mathbf e)^2\dA+Lw_0^2
\]
vanishes only if $w_0=0$ and $\langle(w_1,w_2,w_3),X\rangle=0$ on the dense set $\{|A|\ne0\}$, that is, only if $\Sig$ is
contained in a plane. Hence, $\mathbf G$ is positive definite.
\end{proof}

For $a\in\R^3$, the vector field $W_a(x)=\frac12(1+|x|^2)a-\langle a,x\rangle x$ generates the M\"obius transformations of
$\Ball$ which move the origin in the direction $a$; it is tangent to $\Sph$. Its normal component on $\Sig$ is
\[
  u_a=\langle W_a,\nu\rangle=\tfrac12\bigl(1+|X|^2\bigr)\nu_a-X_a\,\zeta .
\]

\begin{proposition}\label{prop:boost}
Let $a\in\R^3$. Then $Ju_a=2\nu_a$ in $\Sig$, and $u_a=\nu_a$ and $\partial_\eta u_a=u_a$ on $\bdSig$. Consequently,
\begin{equation}\label{eq:boostid}
  S(u_a,f)=-2\int_\Sig\nu_a\,f\dA\qquad\text{for all }f\in H^1(\Sig).
\end{equation}
Furthermore, for $a,c\in\R^3$, we have $S(u_a,u_c)=\langle\mathbf Qa,c\rangle$, where
\[
  \mathbf Q=6\int_\Sig X\otimes X\dA-\Bigl(|\Sig|+\int_\Sig|X|^2\dA\Bigr)I .
\]
Moreover, the area of $\Sig$ satisfies \[|\Sig|=\int_\Sig\bigl(2|X|^2-\zeta^2\bigr)\dA.\]
\end{proposition}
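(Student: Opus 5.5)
The plan is to compute everything directly from Lemma~\ref{lem:basic}, writing $u_a=\tfrac12(1+|X|^2)\nu_a-X_a\zeta$ as a sum of products and applying the product rule for the Jacobi operator, $J(\phi\psi)=\phi J\psi+\psi\Delta\phi+2\langle\nabla\phi,\nabla\psi\rangle$.

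\textbf{The equation $Ju_a=2\nu_a$.} We use the minimal-surface identities $\Delta|X|^2=4$, $\nabla|X|^2=2X^\top$, $\Delta X_a=0$, $\nabla X_a=a^\top$, $\nabla\zeta=-A(X^\top,\cdot)$ (since $\nabla_v\langle X,\nu\rangle=\langle X,d\nu(v)\rangle$) and $\nabla\nu_a=-A(a^\top,\cdot)$. Together with $J\nu_a=J\zeta=0$, this gives
\[
J\bigl(\tfrac12(1+|X|^2)\nu_a\bigr)=2\nu_a-2A(X^\top,a^\top),\qquad J(X_a\zeta)=-2A(a^\top,X^\top),
\]
and subtracting yields $Ju_a=2\nu_a$.

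\textbf{Boundary behaviour.} On $\bdSig$ we have $|X|=1$ and $\zeta=0$, so $u_a=\nu_a$. For the normal derivative, use $\partial_\eta|X|^2=2\langle X,\eta\rangle=2$, $\partial_\eta\nu_a=kX_a$ and $\partial_\eta\zeta=k$ from Lemma~\ref{lem:basic}(iii). Then
\[
\partial_\eta u_a=\nu_a+kX_a-kX_a-0=\nu_a=u_a .
\]
Inserting both facts into \eqref{eq:Sparts} gives \eqref{eq:boostid}.

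\textbf{The matrix $\mathbf Q$.} By \eqref{eq:boostid}, $S(u_a,u_c)=-2\int\nu_a u_c=-\int(1+|X|^2)\nu_a\nu_c+2\int \nu_a X_c\zeta$. The task is to rewrite the last integral, and I expect this to be the main obstacle: it needs the right divergence identity. The plan is to use $X_c\,\nu_a\zeta$ together with the tangential decomposition $a=a^\top+\nu_a\nu$ and $X=X^\top+\zeta\nu$, so that
\[
\nu_a\zeta=\langle a,X\rangle-\langle a^\top,X^\top\rangle.
\]
Then $\int X_c\nu_a\zeta=\int X_aX_c-\int X_c\langle a^\top,X^\top\rangle$. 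For the last term, note $\langle a^\top,X^\top\rangle=\tfrac12\langle\nabla|X|^2,\nabla X_a\rangle$ and integrate by parts using harmonicity of $X_a$ and $\partial_\eta X_a=X_a$ on the boundary. Equivalently, apply the tangential divergence of the vector field $X_aX_c X^\top$ or $|X|^2$-weighted fields, with $\operatorname{div}X^\top=2$. Similarly, $\int\nu_a\nu_c=\langle a,c\rangle|\Sig|-\int\langle a^\top,c^\top\rangle$, where $\int\langle a^\top,c^\top\rangle=\int_{\bdSig}X_aX_c$. Also $\int|X|^2\langle a^\top,c^\top\rangle$ is reduced via $\operatorname{div}(X_a\nabla X_c)$ weighted by $|X|^2$. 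Collecting terms, the boundary integrals should cancel (using $\operatorname{div}(X_aX_cX^\top)$ to convert $\int_{\bdSig}X_aX_c$ into $\int(2X_aX_c+\langle a^\top,X^\top\rangle X_c+\dots)$). The result should be $6\int X_aX_c-(|\Sig|+\int|X|^2)\langle a,c\rangle$. I would verify the constants on the flat disc, where $\int X\otimes X$ is explicit.

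\textbf{Area identity.} Integrate $\operatorname{div}(|X|^2X^\top)=2|X|^2+2|X^\top|^2=4|X|^2-2\zeta^2$ over $\Sig$. The boundary term is $\int_{\bdSig}\langle X,\eta\rangle=L=2|\Sig|$ by Lemma~\ref{lem:basic}(ii). Dividing by $2$ gives $|\Sig|=\int(2|X|^2-\zeta^2)$. This identity also serves to simplify $\mathbf Q$ in the previous step.
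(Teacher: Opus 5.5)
Your proposal follows the paper's route: the product rule for $J$, the boundary identities of Lemma~\ref{lem:basic}, and, for the area formula, the divergence of $|X|^2X^\top$. That divergence is just the trace of the identity for $\operatorname{div}_\Sig(X_aX_cX^\top)$ that the paper uses. The first, second and last parts are complete and correct.

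The one unfinished point is $\mathbf Q$. You leave it at ``should be'' and propose a flat-disc check, which cannot fix the constants in general. Your ingredients do close the computation, but not in the way you describe.

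Put $P=\int_\Sig X_aX_c\dA$ and $W=\int_\Sig|X|^2\langle a^\top,c^\top\rangle\dA$. Your integration by parts uses harmonicity of $X_a$, together with $|X|=1$ and $\partial_\eta X_a=X_a$ on $\bdSig$. It gives $\int_\Sig X_c\langle a^\top,X^\top\rangle\dA=\frac12\int_{\bdSig}X_aX_c\ds-\frac12W$, which is already symmetric in $a,c$, so the paper's symmetrization step is not needed. Insert this and $\int_\Sig\nu_a\nu_c\dA=\langle a,c\rangle|\Sig|-\int_{\bdSig}X_aX_c\ds$. The boundary integrals then cancel and
\[
  S(u_a,u_c)=2P+2W-\langle a,c\rangle\Bigl(|\Sig|+\int_\Sig|X|^2\dA\Bigr).
\]

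It remains to show $W=2P$, and here your sketch is off.

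\begin{itemize}
\item The ``$|X|^2$-weighted'' divergence of $X_a\nabla X_c$ that you propose for reducing $W$ only reproduces the relation just used, with $a$ and $c$ interchanged. So it is circular.
\item $\operatorname{div}_\Sig(X_aX_cX^\top)$ is not ``equivalent'' to that integration by parts. It is the independent second relation,
\[
\int_{\bdSig}X_aX_c\ds=2P+\int_\Sig\bigl(X_c\langle a^\top,X^\top\rangle+X_a\langle c^\top,X^\top\rangle\bigr)\dA .
\]
Combined with the first relation, it yields $W=2P$.
\item Equivalently, $W=2P$ is Green's formula for $|X|^2$ and $X_aX_c$, as in the paper. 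It uses $\Delta|X|^2=4$, $\Delta(X_aX_c)=2\langle a^\top,c^\top\rangle$, and the boundary terms $2X_aX_c-2X_aX_c=0$.
\end{itemize}

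With $W=2P$ you get $S(u_a,u_c)=6P-\langle a,c\rangle\bigl(|\Sig|+\int_\Sig|X|^2\dA\bigr)$, as claimed.
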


\begin{proof}
On a minimal surface, $\Delta|X|^2=4$, $\nabla|X|^2=2X^\top$, and $\nabla X_a=a^\top$. Since $d\nu$ is self-adjoint,
$\nabla\nu_a=d\nu(a^\top)$ and $\nabla\zeta=d\nu(X^\top)$. Using $J(fh)=fJh+h\Delta f+2\langle\nabla f,\nabla h\rangle$
and Lemma~\ref{lem:basic}(iii), we compute
\[
  J\bigl(\tfrac12(1+|X|^2)\nu_a\bigr)=2\nu_a+2\langle X^\top,d\nu(a^\top)\rangle\qquad\text{and}\qquad
  J(X_a\zeta)=2\langle a^\top,d\nu(X^\top)\rangle .
\]
Thus, $Ju_a=2\nu_a$. On $\bdSig$, we have $|X|=1$, $\zeta=0$, $\partial_\eta|X|^2=2$, $\partial_\eta\nu_a=kX_a$, and
$\partial_\eta\zeta=k$. Hence, $u_a=\nu_a$ and
\[
  \partial_\eta u_a=\nu_a+kX_a-kX_a=u_a .
\]
Now \eqref{eq:boostid} follows from \eqref{eq:Sparts} and an approximation argument.

For the second claim, \eqref{eq:boostid} gives
\[
  S(u_a,u_c)=-\int_\Sig(1+|X|^2)\nu_a\nu_c\dA+2\int_\Sig\zeta\nu_aX_c\dA .
\]
By the symmetry of $S$, we may replace $2\zeta\nu_aX_c$ by $\zeta(X_a\nu_c+X_c\nu_a)$. On a minimal surface,
$\operatorname{div}_\Sig(X_aX_cX^\top)=4X_aX_c-\zeta(X_a\nu_c+X_c\nu_a)$, and $\langle X^\top,\eta\rangle=1$ on $\bdSig$.
Moreover, $\int_{\bdSig}X_aX_c=\int_\Sig\langle a^\top,c^\top\rangle$ as in the proof of Lemma~\ref{lem:basic}(ii). Hence,
\begin{equation}\label{eq:zetamix}
  \int_\Sig\zeta\bigl(X_a\nu_c+X_c\nu_a\bigr)\dA=4\int_\Sig X_aX_c\dA-\int_\Sig\bigl(\langle a,c\rangle-\nu_a\nu_c\bigr)\dA .
\end{equation}
Next, Green's formula for $|X|^2$ and $X_aX_c$, with $\Delta(X_aX_c)=2\langle a^\top,c^\top\rangle$,
$\partial_\eta(X_aX_c)=2X_aX_c$, and $\partial_\eta|X|^2=2$ on $\bdSig$, gives
\[
  \int_\Sig|X|^2\bigl(\langle a,c\rangle-\nu_a\nu_c\bigr)\dA=2\int_\Sig X_aX_c\dA .
\]
Substituting, we obtain
\[
  S(u_a,u_c)=6\int_\Sig X_aX_c\dA-\langle a,c\rangle\Bigl(|\Sig|+\int_\Sig|X|^2\dA\Bigr).
\]
Finally, taking the trace of \eqref{eq:zetamix} over an orthonormal basis gives
\[2\int_\Sig\zeta^2=4\int_\Sig|X|^2-2|\Sig|.\] 
\end{proof}

Consequently, 
the following
criterion holds.

\begin{proposition}\label{prop:boostcount}
Suppose that $\Sig$ is not a flat disc. Let $\boldsymbol\lambda=\int_\Sig\mathbf e\,\nu^{\mathrm T}\dA\in\R^{4\times3}$ and
$\mathbf C=\mathbf Q+4\boldsymbol\lambda^{\mathrm T}\mathbf G^{-1}\boldsymbol\lambda$. Then
\[
  \Ind(\Sig)\ge4+n_-(\mathbf C),
\]
where $n_-$ denotes the number of negative eigenvalues.
\end{proposition}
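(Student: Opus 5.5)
We test $S$ on the seven-dimensional space $W=V_0+\{u_a:a\in\R^3\}$ and compute its Gram matrix. Then we apply Lemma~\ref{lem:iso} with $G$ trivial, or simply use the min--max characterization of the index. By Lemma~\ref{lem:V0}, the Gram matrix of $S$ on the basis $1,X_1,X_2,X_3$ is $-\mathbf G$, which is negative definite. By \eqref{eq:boostid}, for $f\in V_0$ with coefficient vector $w\in\R^4$, that is $f=w^{\mathrm T}\mathbf e$, we get
\[
S(u_a,f)=-2\int_\Sig\nu_a\,w^{\mathrm T}\mathbf e\dA=-2\,w^{\mathrm T}\boldsymbol\lambda a .
\]
By Proposition~\ref{prop:boost}, $S(u_a,u_c)=\langle\mathbf Qa,c\rangle$. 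Hence, in the coordinates $(w,a)\in\R^4\times\R^3$, the form $S$ restricted to $W$ is represented by the symmetric block matrix
\[
\mathbf M=\begin{pmatrix}-\mathbf G & -2\boldsymbol\lambda\\ -2\boldsymbol\lambda^{\mathrm T} & \mathbf Q\end{pmatrix},
\]
understood as a form on the parameter space $\R^7$, which maps onto $W$.

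Next we take the Schur complement with respect to the invertible block $-\mathbf G$. It equals
\[
\mathbf Q-(-2\boldsymbol\lambda^{\mathrm T})(-\mathbf G)^{-1}(-2\boldsymbol\lambda)=\mathbf Q+4\boldsymbol\lambda^{\mathrm T}\mathbf G^{-1}\boldsymbol\lambda=\mathbf C .
\]
Sylvester's law of inertia then gives $n_-(\mathbf M)=n_-(-\mathbf G)+n_-(\mathbf C)=4+n_-(\mathbf C)$. Explicitly, the change of variables $w\mapsto w+2\mathbf G^{-1}\boldsymbol\lambda a$ block-diagonalizes $\mathbf M$ into $-\mathbf G\oplus\mathbf C$. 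So there is a subspace $P\subset\R^7$ of dimension $4+n_-(\mathbf C)$ on which the quadratic form of $\mathbf M$ is negative definite.

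It remains to pass from the parameter space to actual functions, since the linear map $\Phi:\R^7\to H^1(\Sig)$, $(w,a)\mapsto w^{\mathrm T}\mathbf e+u_a$, may fail to be injective. This is the only delicate point. Negative definiteness settles it: if $v\in P$ and $\Phi(v)=0$, then $v^{\mathrm T}\mathbf Mv=S(\Phi v,\Phi v)=0$, which forces $v=0$. Thus $\Phi$ is injective on $P$, and $S$ is negative definite on the subspace $\Phi(P)$, which has dimension $4+n_-(\mathbf C)$. The variational characterization of the index recalled in \S\ref{sec:prelim} then yields $\Ind(\Sig)\ge4+n_-(\mathbf C)$.

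Note that the identity $S(u_a,f)=-2\int_\Sig\nu_af$ requires $u_a\in H^1(\Sig)$. This holds because $u_a$ is smooth up to the boundary. We also need $\mathbf G$ to be invertible, which is guaranteed by Lemma~\ref{lem:V0} since $\Sig$ is not a flat disc.
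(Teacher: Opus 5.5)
Your proposal is correct and follows essentially the same route as the paper: you form the Gram matrix of $S$ on $1,X_1,X_2,X_3,u_{e_1},u_{e_2},u_{e_3}$, apply inertia additivity for the Schur complement with respect to $-\mathbf G$, and compare with the index. The paper cites Haynsworth where you write out the explicit congruence, and it handles possible linear dependence implicitly (the negative inertia of a Gram matrix equals the index of $S$ on the span), whereas you make that step explicit via negative definiteness on $P$.
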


\begin{proof}
By Lemma~\ref{lem:V0} and Proposition~\ref{prop:boost}, the Gram matrix of $S$ with respect to the seven functions
$1,X_1,X_2,X_3,u_{e_1},u_{e_2},u_{e_3}$ is
\[
  M=\begin{pmatrix}-\mathbf G&-2\boldsymbol\lambda\\-2\boldsymbol\lambda^{\mathrm T}&\mathbf Q\end{pmatrix}.
\]
By Haynsworth's inertia additivity \cite{Hay68},
\[
  n_-(M)=n_-(-\mathbf G)+n_-\bigl(\mathbf Q-4\boldsymbol\lambda^{\mathrm T}(-\mathbf G)^{-1}\boldsymbol\lambda\bigr)=4+n_-(\mathbf C).
\]
On the other hand, the number of negative eigenvalues of the Gram matrix of a finite family equals the index of $S$ on
its span, which is at most $\Ind(\Sig)$. The result then follows.
\end{proof}

The following integral inequality will control the coupling term.

\begin{lemma}\label{lem:zeta}
For every FBMS $\Sig$,
\[
  \Bigl(\int_\Sig\zeta\dA\Bigr)^2\ \le\ \frac1{12}\int_\Sig|X^\top|^2\dA\cdot\int_\Sig|A|^2|X|^2\dA .
\]
\end{lemma}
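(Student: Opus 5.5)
The plan is to write $\int_\Sig\zeta$ as the integral of the second fundamental form paired against a quadratic expression in $X^\top$, and then apply Cauchy--Schwarz twice: once pointwise, using $\tr A=0$, and once in $L^2$. The constant $\tfrac1{12}$ should come from how sharp these two steps are.

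\emph{Step 1: the integral identity.} On a minimal surface $\operatorname{div}_\Sig X^\top=2$, and $\nabla\zeta=d\nu(X^\top)$, so $\langle\nabla\zeta,X^\top\rangle=-A(X^\top,X^\top)$. Since $\zeta=0$ on $\bdSig$ by \eqref{eq:fb}, integrating by parts gives
\[
  2\int_\Sig\zeta\dA=\int_\Sig\zeta\operatorname{div}_\Sig X^\top\dA=-\int_\Sig\langle\nabla\zeta,X^\top\rangle\dA=\int_\Sig A(X^\top,X^\top)\dA ,
\]
and the boundary term vanishes. As a cross-check I would also record the second identity
\[
  4\int_\Sig\zeta\dA=\int_\Sig|A|^2\bigl(1-|X|^2\bigr)\zeta\dA .
\]
It follows from Green's formula for the pair $\zeta,|X|^2$, using $\Delta\zeta=-|A|^2\zeta$, $\Delta|X|^2=4$, and, on $\bdSig$, $\partial_\eta\zeta=k$, $\partial_\eta|X|^2=2$ and $|X|=1$. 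It would be useful if the first identity alone does not give the right constant.

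\emph{Step 2: pointwise and integral Cauchy--Schwarz.} Because $A$ is traceless with principal curvatures $\pm\kappa$ and $|A|^2=2\kappa^2$, we have $|A(v,v)|\le|A||v|^2/\sqrt2$. Then Cauchy--Schwarz in $L^2$, splitting $|A||X^\top|^2=|X^\top|\cdot\bigl(|A||X^\top|\bigr)$, together with $|X^\top|\le|X|$, gives
\[
  \Bigl(\int_\Sig\zeta\Bigr)^2\le\frac18\int_\Sig|X^\top|^2\int_\Sig|A|^2|X|^2 .
\]

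\emph{Main obstacle: improving $\tfrac18$ to $\tfrac1{12}$.} This naive chain has the right structure but the wrong constant, and closing this gap is the step I expect to be hardest. I would first try to write $\int_\Sig\zeta=\int_\Sig\langle A,T\rangle$ for a better traceless symmetric tensor $T$. The candidates are combinations of $(X^\top\otimes X^\top)^\circ$ and the traceless part of $\nabla X^\top$, using $\operatorname{Hess}X_a=\nu_aA$ and hence $\operatorname{Hess}|X|^2=2g+2\zeta A$. One would then choose the combination minimizing $\int_\Sig|T|^2/|X|^2$ relative to $\int_\Sig|X^\top|^2$; an optimization of this kind is the natural source of a factor $\tfrac1{12}=\tfrac1{4\cdot3}$. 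Alternatively, one could average the two identities of Step~1 with a free parameter before applying Cauchy--Schwarz, keeping the exact weight $|X|^2$ instead of the loss $|X^\top|\le|X|$, and bounding the remaining term with Lemma~\ref{lem:basic}(ii) and the relation $|\Sig|=\int_\Sig(2|X|^2-\zeta^2)\dA$ from Proposition~\ref{prop:boost}.
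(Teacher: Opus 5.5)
Your Steps 1 and 2 are correct and coincide with the first half of the paper's argument. However, the proposal stops at the constant $\tfrac18$. The paragraph on improving it to $\tfrac1{12}$ only lists strategies (optimizing a tensor $T$, averaging with your second identity) without carrying any of them out, so as written the lemma is not proved.

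The whole loss sits in the pointwise step $|X^\top|\le|X|$, which discards the normal part $\zeta^2=|X|^2-|X^\top|^2$. The missing idea is that this part can be computed exactly on average. Since $d\nu$ is a trace-free symmetric endomorphism of a plane, $(d\nu)^2=\tfrac12|A|^2\,\mathrm{id}$. Hence $|\nabla\zeta|^2=|d\nu(X^\top)|^2=\tfrac12|A|^2|X^\top|^2$ holds with equality, not merely as the inequality you used for $A(v,v)$. Multiplying the Jacobi equation $\Delta\zeta=-|A|^2\zeta$ by $\zeta$ and integrating, using $\zeta=0$ on $\bdSig$, gives
\[
\frac12\int_\Sig|A|^2|X^\top|^2\dA=\int_\Sig|\nabla\zeta|^2\dA=\int_\Sig|A|^2\zeta^2\dA .
\]
Together with $|X|^2=|X^\top|^2+\zeta^2$, this yields $\int_\Sig|A|^2|X^\top|^2\dA=\tfrac23\int_\Sig|A|^2|X|^2\dA$. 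Inserting this into your Cauchy--Schwarz inequality before the lossy step gives
\[
4\Bigl(\int_\Sig\zeta\dA\Bigr)^2\le\frac12\int_\Sig|X^\top|^2\dA\int_\Sig|A|^2|X^\top|^2\dA=\frac13\int_\Sig|X^\top|^2\dA\int_\Sig|A|^2|X|^2\dA ,
\]
which is exactly the claimed bound, since $\tfrac18\cdot\tfrac23=\tfrac1{12}$. This is the paper's proof. No optimization over tensors and no use of your second identity is needed.
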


\begin{proof}
Since $\operatorname{div}X^\top=2$ and $\nabla\zeta=d\nu(X^\top)$, we have
\[
  \operatorname{div}(\zeta X^\top)=\langle d\nu(X^\top),X^\top\rangle+2\zeta .
\]
As $\zeta=0$ on $\bdSig$, integrating yields $2\int_\Sig\zeta=-\int_\Sig\langle d\nu(X^\top),X^\top\rangle$. The eigenvalues of
$d\nu$ are $\pm|A|/\sqrt2$; thus, $|\langle d\nu(v),v\rangle|\le|A||v|^2/\sqrt2$. By the Cauchy--Schwarz inequality,
\begin{equation}\label{eq:zeta1}
  4\Bigl(\int_\Sig\zeta\dA\Bigr)^2\le\frac12\int_\Sig|A|^2|X^\top|^2\dA\cdot\int_\Sig|X^\top|^2\dA .
\end{equation}

Next, $|\nabla\zeta|^2=|d\nu(X^\top)|^2=\frac12|A|^2|X^\top|^2$. Multiplying $\Delta\zeta=-|A|^2\zeta$ by $\zeta$ and
integrating, using $\zeta=0$ on $\bdSig$, we obtain
\[
  \frac12\int_\Sig|A|^2|X^\top|^2\dA=\int_\Sig|\nabla\zeta|^2\dA=\int_\Sig|A|^2\zeta^2\dA .
\]
Since $|X^\top|^2=|X|^2-\zeta^2$, it follows that $\int_\Sig|A|^2|X|^2=3\int_\Sig|A|^2\zeta^2$. Consequently,
\[
  \int_\Sig|A|^2|X^\top|^2\dA=\frac23\int_\Sig|A|^2|X|^2\dA .
\]
Substituting into \eqref{eq:zeta1}, we get the desired claim.
\end{proof}

\begin{proof}[Proof of Theorem~\ref{thm:poly}] We divide the argument into several steps.

\emph{Step 1: the group.} 
The rotation subgroup $\Gamma_0=\Gamma\cap SO(3)$ has index at most two, and, by the classification of the finite subgroups
of $SO(3)$ \cite[Ch.~2]{GB85}, it is cyclic, dihedral, or one of the rotation groups $T$, $O$, and $I$ of a regular tetrahedron, octahedron, and icosahedron. We claim that $\Gamma_0\in\{T,O,I\}$.

Indeed, if $\Gamma_0$ is non-trivial and cyclic, or dihedral of order at least six, then $\Gamma$ preserves its axis,
respectively its principal axis. If $\Gamma_0$ is the Klein four-group, then $\Gamma$ permutes its three axes. An element
permuting them cyclically would have order divisible by three, and a suitable power of it would be an element of order
three, hence of determinant one, in $\Gamma_0$, which is impossible; so $\Gamma$ preserves one of the axes. If $\Gamma_0$ is trivial,
then $\Gamma$ has order at most two. In all these cases, $\Gamma$ acts reducibly, which proves the claim. 

Consequently,
$\Gamma$ contains a copy of $T$, which acts irreducibly on $\R^3$. By Lemma~\ref{lem:sym}(i), $\eps$ restricts to a
homomorphism $T\to\{\pm1\}$, which is trivial because $T$ is isomorphic to the alternating group on four letters with no subgroup of index two. Thus, $\nu\circ g=g\nu$ for $g\in T$.

\emph{Step 2: the integral identities.} By Lemma~\ref{lem:sym}(iii), we have $\nu_a\circ g^{-1}=\nu_{ga}$ and $|A|^2\circ g^{-1}=|A|^2$
for $g\in T$; moreover, $X_a\circ g^{-1}=X_{ga}$. Hence, $\int_\Sig\nu$ and $\int_\Sig|A|^2X$ are $T$-invariant vectors and,
thus, vanish. Furthermore, the matrices $\int_\Sig X\otimes X$, $\int_\Sig|A|^2X\otimes X$, and $P=\int_\Sig X\otimes\nu$
commute with $T$. Since the commutant of an irreducible real representation of odd dimension is $\R$, they are scalar. As
$\tr P=\int_\Sig\zeta$, we obtain
\[
  P=\frac13\Bigl(\int_\Sig\zeta\dA\Bigr)I,\qquad \int_\Sig X\otimes X\dA=\frac13\int_\Sig|X|^2\dA\cdot I,\qquad
  \int_\Sig|A|^2X\otimes X\dA=\beta I,
\]
where $\beta=\frac13\int_\Sig|A|^2|X|^2>0$.

\emph{Step 3: the boost form.} In Proposition~\ref{prop:boostcount}, we have
$\boldsymbol\lambda=\bigl(\begin{smallmatrix}0\\P\end{smallmatrix}\bigr)$ and $\mathbf G=\mathrm{diag}(\mathbf G_{00},\beta I)$.
Thus, by Lemma~\ref{lem:zeta},
\[
  4\boldsymbol\lambda^{\mathrm T}\mathbf G^{-1}\boldsymbol\lambda=\frac4\beta P^{\mathrm T}P
  =\frac43\,\frac{(\int_\Sig\zeta)^2}{\int_\Sig|A|^2|X|^2}\,I\ \le\ \frac19\int_\Sig|X^\top|^2\dA\cdot I .
\]
By Proposition~\ref{prop:boost}, $\mathbf Q=\bigl(2\int_\Sig|X|^2-|\Sig|-\int_\Sig|X|^2\bigr)I=-\int_\Sig|X^\top|^2\,I$.
Hence,
\[
  \mathbf C\le-\frac89\int_\Sig|X^\top|^2\dA\cdot I .
\]
Here $\int_\Sig|X^\top|^2>0$; otherwise, $X^\top\equiv0$ would place $\Sig$ in a sphere about the origin, which is
impossible for a minimal surface. Therefore, $n_-(\mathbf C)=3$, and Proposition~\ref{prop:boostcount} gives
${\Ind(\Sig)\ge7}$.
\end{proof}


\section{The index of prismatic surfaces}\label{sec:prism}

In this section, we prove Theorem~\ref{thm:main}. Throughout, $b\ge3$,
$\Sig\in\cS_b$, $\ell=\R e_3$, and $G=\Dbh=D_b\times\langle\sigma_h\rangle$. 
As $G$ is generated by reflections, Lemma~\ref{lem:sym} gives $\nu\circ\gamma=\gamma\nu$ for all
$\gamma\in G$. Consequently, $G$ acts on functions by $\gamma\cdot f=f\circ\gamma^{-1}$, preserving $S$ and $\ker S$.

\subsection{The upper half}\label{ssec:star}
In this subsection, we describe the fundamental domain $\overline U$ of $\sigma_h$ and the fixed point $N$ of $\Rot$ in it; see Figure~\ref{fig:upper} for the case $b=5$.

\begin{lemma}\label{lem:star}
	Let $\Lambda=\Sig\cap\{x_3=0\}$, $\overline U=\Sig\cap\{x_3\ge0\}$, and $U=\Sig^\circ\cap\{x_3>0\}$. Then the following
	statements hold:
	\begin{enumerate}[label=\textup{(\roman*)},itemsep=2pt]
		\item $\Lambda$ consists of $b$ disjoint smooth arcs, each of which meets $\bdSig$ orthogonally at its two endpoints;
		\item $\overline U$ is a closed disc with interior $U$, and its boundary consists of $\Lambda$ and the arcs
		$\bdSig\cap\{x_3\ge0\}$;
		\item $\Rot$ has exactly one fixed point $N$ in $U$; moreover, $N$ lies on $\ell$ and is fixed by $D_b$, it is an umbilic of
		order at least $b-2$, and $T_N\Sig=\ell^\perp$, so that $D_b$ acts on $T_N\Sig$ as on $\ell^\perp$, that is, through
		the representation $E_1$.
	\end{enumerate}
\end{lemma}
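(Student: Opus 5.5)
The plan is to read everything off the equivariant circle-domain model of Lemma~\ref{lem:koebe}, applied with $G=\Dbh$. This is legitimate because $G$ is generated by reflections, so Lemma~\ref{lem:sym}(ii) gives $\nu\circ\gamma=\gamma\nu$ for all $\gamma\in G$. We obtain a circle domain $\Om$ with $b$ caps, a conformal diffeomorphism $F:\Sig\to\overline\Om$, and an injective homomorphism $\gamma\mapsto\hat\gamma$ into $O(3)$ with $\det\hat\gamma=\det\gamma$ and $F\circ\gamma=\hat\gamma\circ F$.

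\emph{Identifying $\hat\sigma_h$ and $\hat\Rot$.} Since $\hat\Rot$ has order $b\ge3$ and determinant one, it is a rotation about some axis $\hat\ell$. By the claim in the proof of Theorem~\ref{thm:rot}, a cap invariant under a nontrivial power of $\hat\Rot$ is centered at a fixed point of $\hat\Rot$. Because $\Rot$ permutes the $b$ boundary components cyclically, no cap is invariant, so $\hat\Rot$ has its two fixed points in $\Om$ (this is the case $d=0$ there).

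Next, $\hat\sigma_h$ is an involution of determinant $-1$, hence either $-I$ or a reflection in a plane $P$. It cannot be $-I$: it maps each cap to itself, so it would fix the cap's center, whereas $-I$ has no fixed points on $\Sph$. So $\hat\sigma_h$ is the reflection in a plane $P$, and every cap is centered on the great circle $P\cap\Sph$. Since $\hat\sigma_h$ commutes with $\hat\Rot$, the plane $P$ is $\hat\Rot$-invariant, so $P$ either contains $\hat\ell$ or equals $\hat\ell^\perp$. In the first case, conjugation by $\hat\sigma_h$ would invert $\hat\Rot$, which is impossible since $b\ge3$. Hence $P=\hat\ell^\perp$: the caps are centered on the equator of $\hat\ell$, and the poles are the fixed points of $\hat\Rot$.

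\emph{Parts (i) and (ii).} The fixed set of $\sigma_h$ on $\Sig$ is exactly $\Lambda=\Sig\cap\{x_3=0\}$. Via $F$, it corresponds to the fixed set of $\hat\sigma_h$ in $\overline\Om$, namely the equator minus the open caps. This consists of $b$ disjoint great-circle arcs, each joining two consecutive caps. Each arc meets the cap boundaries orthogonally, because it passes through the cap centers. Since $F$ is a conformal diffeomorphism up to the boundary, this proves (i).

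The complement of the equator in $\overline\Om$ consists of two closed hemispheres minus half-caps, and each of these is a closed topological disc. Their preimages are the two components of $\Sig\setminus\Lambda$, which lie in $\{x_3>0\}$ and $\{x_3<0\}$ because $x_3$ does not vanish off $\Lambda$ and $\sigma_h$ swaps the two pieces. This gives (ii), with the boundary of $\overline U$ made up of $\Lambda$ and the upper boundary arcs.

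\emph{Part (iii).} The fixed points of $\Rot$ on $\Sig$ correspond to the two poles, exactly one in each hemisphere. Hence exactly one of them, call it $N$, lies in $U$.

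For a vertical reflection $s$, we have $s\Rot s=\Rot^{-1}$, so $s(N)$ is also fixed by $\Rot$. Moreover, $s$ commutes with $\sigma_h$ and fixes $x_3$, so $s$ preserves $U$. By uniqueness of the fixed point in $U$, $s(N)=N$, and therefore $N$ is fixed by all of $D_b$.

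Lemma~\ref{lem:hopf}, applied with $m=b$, now gives:
\begin{itemize}
\item $N\in\ell$;
\item $T_N\Sig=\ell^\perp$;
\item $N$ is an umbilic of order $m_N\ge b-2$.
\end{itemize}
Finally, $D_b$ acts on $T_N\Sig=\ell^\perp$ by restricting its linear action on $\R^3$, and on $\ell^\perp$ this is the representation $E_1$ by definition.

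\emph{Main obstacle.} The step needing most care is the identification of $\hat\sigma_h$ as the reflection in $\hat\ell^\perp$. It requires excluding $-I$ and the case of a plane containing $\hat\ell$, and then transferring the fixed-point structure back to $\Sig$, including smoothness and orthogonality of $\Lambda$ up to $\bdSig$. The latter uses that $F$ is a diffeomorphism up to the boundary, as established in Lemma~\ref{lem:koebe}.
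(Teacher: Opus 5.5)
Your proof is correct, but it runs in the opposite direction from the paper's. The paper proves Lemma~\ref{lem:star} intrinsically, before any uniformization:
\begin{itemize}
\item for (i), $\nu$ is horizontal along $\Lambda$, so $\Sig$ and $\bdSig$ are transverse to $\{x_3=0\}$, and $\sigma_h$ reverses the orientation of each boundary component, which gives exactly $2b$ endpoints;
\item for (ii), the Euler characteristic count $2-b=2\chi(\overline U)-b$ shows at once that $\overline U$ is a disc and that $\Lambda$ has no closed components;
\item for (iii), $N$ comes from Brouwer's theorem, after checking that $\Rot$ has no fixed point on $\partial\overline U$, and uniqueness from the fact that a nontrivial finite-order conformal automorphism of a disc has a single fixed point.
\end{itemize}
You instead apply Lemma~\ref{lem:koebe} first and show, without using Lemma~\ref{lem:star}, that $\hat\sigma_h$ is the reflection in $\hat\ell^\perp$. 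Excluding $-I$ via the fixed cap centers replaces the paper's use of $\Lambda\neq\emptyset$ in Lemma~\ref{lem:model}, so there is no circularity. Statements (i)--(iii) are then read off the picture of $b$ disjoint caps centered on an equator.

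Your route costs heavier machinery: Koebe's theorem, the boundary regularity of $F$, and Poincar\'e extension. In return it gives more. It essentially establishes the content of Lemma~\ref{lem:model} independently, and it shows that each arc of $\Lambda$ joins two different, consecutive boundary components. Transversality, orthogonality at the endpoints, and the exact count of fixed points all come for free from conformality. The paper's route is elementary and uses only the symmetry and the topology, which is what lets Lemma~\ref{lem:star} serve as an input to Lemma~\ref{lem:model}.

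One small point: going from ``no cap is invariant'' to ``the fixed points of $\hRot$ lie in $\Om$'' uses the converse of the claim you quote. A fixed point $p$ in a closed cap $D$ gives $p\in\hRot(D)\cap D$, hence $\hRot(D)=D$ by disjointness. This is immediate, but worth stating.
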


\begin{figure}[ht]
	\centering
	\begin{tikzpicture}[x=1pt,y=1pt,yscale=-1,scale=0.62,every node/.style={font=\small}]
		\draw[gray,thin] (140,200) circle (95);
		\draw[gray,dashed] (35,200) -- (245,200);
		\draw[gray,dashed] (140,95) -- (140,305);
		\draw[very thick] svg {M 51 168 Q 140 150 229 168};
		\draw[very thick] svg {M 229 168 Q 216 184 216 200};
		\draw[gray] svg {M 216 200 Q 216 216 229 232};
		\draw[gray] svg {M 51 232 Q 140 250 229 232};
		\fill (216,200) circle (2.6);
		\fill (140,159) circle (2.2);
		\node[anchor=base west] at (150,150) {$N$};
		\node[anchor=base west] at (96,146) {$\overline U$};
		\node[anchor=base east] at (208,194) {$\Lambda$};
		\node[anchor=base west] at (146,106) {$\ell$};
		\node[anchor=east] at (33,200) {$x_3=0$};
		\fill[gray!15] svg {M 428.5 93.8 A 110 110 0 0 1 492.3 140.1 Q 476.1 175.3 509.8 194.2 A 110 110 0 0 1 485.5 269.2 Q 447.0 264.7 435.9 304.0 A 110 110 0 0 1 364.1 304.0 Q 353.0 264.7 314.5 269.2 A 110 110 0 0 1 290.2 194.2 Q 323.9 175.3 307.7 140.1 A 110 110 0 0 1 371.5 93.8 Q 400 120 428.5 93.8 Z};
		\draw[very thick] svg {M 428.5 93.8 A 110 110 0 0 1 492.3 140.1};
		\draw[very thick] svg {M 509.8 194.2 A 110 110 0 0 1 485.5 269.2};
		\draw[very thick] svg {M 435.9 304.0 A 110 110 0 0 1 364.1 304.0};
		\draw[very thick] svg {M 314.5 269.2 A 110 110 0 0 1 290.2 194.2};
		\draw[very thick] svg {M 307.7 140.1 A 110 110 0 0 1 371.5 93.8};
		\draw[very thick,dashed] svg {M 492.3 140.1 Q 476.1 175.3 509.8 194.2};
		\draw[very thick,dashed] svg {M 485.5 269.2 Q 447.0 264.7 435.9 304.0};
		\draw[very thick,dashed] svg {M 364.1 304.0 Q 353.0 264.7 314.5 269.2};
		\draw[very thick,dashed] svg {M 290.2 194.2 Q 323.9 175.3 307.7 140.1};
		\draw[very thick,dashed] svg {M 371.5 93.8 Q 400 120 428.5 93.8};
		\draw[->,>=stealth] svg {M 380.9 189 A 22 22 0 0 1 419.1 189};
		\fill (400,200) circle (2.2);
		\node[anchor=base west] at (410,222) {$N$};
		\node[anchor=base west] at (408,266) {$\overline U$};
		\draw[gray,very thin] (400,176) -- (534,110) node[anchor=west,black] {$\mathrm{Rot}_5$};
		\draw[gray,very thin] (489,171) -- (534,160) node[anchor=west,black] {arcs of $\Lambda$};
		\draw[gray,very thin] (505,234) -- (534,240) node[anchor=west,black] {arcs of $\partial\Sigma$};
	\end{tikzpicture}
	\caption{The upper half for $b=5$, drawn schematically after the $b$-noid. Left: the section through a vertical mirror plane, which meets a bridge on one side of $\ell$ and a
		boundary component on the other; $\overline U$ is drawn thick. Right: $\overline U$ seen from above. Its boundary
		alternates between the five arcs of $\Lambda$ (dashed) and the five arcs of $\bdSig\cap\{x_3\ge0\}$ (solid), and
		$\mathrm{Rot}_5$ turns $\overline U$ about its unique fixed point $N$.}
	\label{fig:upper}
\end{figure}

\begin{proof}
(i) At $x\in\Lambda$, we have $\sigma_hx=x$. Thus, $\nu(x)=\nu(\sigma_hx)=\sigma_h\nu(x)$ is horizontal, and
$e_3\in T_x\Sig$. Each boundary component $\Gamma$ is $\sigma_h$-invariant; hence, it meets the equator, since otherwise
$\Gamma$ and $\sigma_h(\Gamma)$ would lie in opposite open hemispheres. Let $p\in\Gamma\cap\Lambda$. By
Lemma~\ref{lem:sym}(iii), $\sigma_h$ reverses the orientation of $\Sig$, so $d\sigma_h$ is a reflection of $T_p\Sig$. It
fixes $\eta(p)=p$; therefore, the tangent line of $\Gamma$ at $p$ is spanned by $e_3$. Consequently, $\Sig$ and $\bdSig$
are transversal to $\{x_3=0\}$, and $\Lambda$ is a compact smooth curve whose endpoints are its points on $\bdSig$, where
it is tangent to $\eta$. Moreover, $\sigma_h$ reverses the orientation of $\Gamma$, so it has exactly two fixed points on
$\Gamma$. Thus, $\Lambda$ has $2b$ endpoints, and it consists of $b$ arcs and possibly some closed curves; the latter are
excluded in (ii).

(ii) The set $\overline U$ is a compact connected orientable surface with corners. Indeed, it is a fundamental domain for
$\sigma_h$ with $\overline U\cap\sigma_h(\overline U)=\Lambda$; it is homeomorphic to $\Sig/\sigma_h$ and, hence, connected;
and it lies in $\Sig$. Moreover,
\[
  2-b=\chi(\Sig)=2\chi(\overline U)-\chi(\Lambda)=2\chi(\overline U)-b .
\]
Thus, $\chi(\overline U)=1$, and $\overline U$ is a closed disc. Its boundary is a single closed curve containing $\Lambda$;
therefore, $\Lambda$ has no closed components, and the interior of $\overline U$ is $U$.

(iii) The rotation $\Rot$ preserves $\Sig$ and the coordinate $x_3$; thus, it preserves $\overline U$. We claim that $\Rot$ has
no fixed point on
$\partial\overline U$. 

Indeed, $\Rot$ moves every boundary component of $\Sig$. If it fixed a point of an arc $\alpha$ of
$\Lambda$, then $\Rot(\alpha)=\alpha$, and $\Rot$ would preserve the set of at most two boundary components containing the
endpoints of $\alpha$. As $\Rot$ fixes no boundary component, it would exchange two of them, and $\Rot^2$ would fix one, which
is impossible for a cyclic permutation of $b\ge3$ objects. This proves the claim. By Brouwer's theorem, $\Rot$ has a fixed
point $N\in U$, and the fixed points of $\Rot$ in $\R^3$ form $\ell$. Moreover, $N$ is the only fixed point of $\Rot$ in $U$,
because $\Rot$ restricts to a conformal automorphism of finite order $\ne1$ of the disc $U$. Every element of $D_b$ fixes
$\ell$ pointwise; hence, it fixes $N$ and acts on $T_N\Sig$ by its linear part. Finally, by Lemma~\ref{lem:hopf},
$T_N\Sig=\ell^\perp$ and $m_N\ge b-2$.
\end{proof}

The next lemma is the key topological input for the odd sector and for the nullity.

\begin{lemma}\label{lem:jordan}
Let $f\not\equiv0$ be a smooth function on $\overline U$ such that $\Delta f+cf=0$ in $U$ for a smooth function $c$.
Suppose that the sets $U_+=\{x\in U:f(x)>0\}$ and $U_-=\{x\in U:f(x)<0\}$ are connected. Then $f(N)\ne0$ or $df_N\ne0$.
\end{lemma}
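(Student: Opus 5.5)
We argue by contradiction: suppose $f(N)=0$ and $df_N=0$. The plan is to show that $f$ then has at least three sign-alternating sectors at $N$, and that connectedness of $U_+$ and $U_-$ together with the Jordan curve theorem in the disc $\overline U$ forbids this.

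\emph{Step 1: local structure at $N$.} By \S\ref{ssec:types}, since $f$ solves $\Delta f+cf=0$ and $f\not\equiv0$, in a conformal coordinate $z$ centered at $N$ we have
\[
f=\operatorname{Re}(\alpha z^j)+O(|z|^{j+1}), \qquad j\ge1,\ \alpha\ne0 .
\]
(For the Laplacian of the induced metric, conformality only multiplies the equation by a positive factor.) The assumption $df_N=0$ forces $j\ge2$. Near $N$, the nodal set then consists of $2j\ge4$ arcs issuing from $N$, and $f$ alternates in sign on the $2j$ sectors between consecutive arcs. Shrinking to a small disc $B\subset U$ around $N$, we get points $p_1,q_1,p_2,q_2\in B$, one in each of four consecutive sectors, listed in cyclic order around $N$, with $f(p_i)>0$ and $f(q_i)<0$.

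\emph{Step 2: connect the points.} Since $U_+$ is open and connected, there is a simple arc $\gamma_+\subset U_+$ from $p_1$ to $p_2$. Closing it up through $N$ with the two radial segments from $p_1$ and $p_2$ to $N$ gives a Jordan curve $C$. These segments lie in their sectors, hence in $U_+\cup\{N\}$, and after modification $C$ meets itself only at its endpoints. Likewise, $U_-$ connected gives an arc $\gamma_-\subset U_-$ from $q_1$ to $q_2$, extended by radial segments to $N$.

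\emph{Step 3: Jordan curve argument.} Here we use Lemma~\ref{lem:star}(ii): $U$ is an open disc, so the Jordan curve theorem applies in $U\cong\R^2$. Because $q_1$ and $q_2$ lie in the two sectors separated by $C$'s two rays at $N$, they are locally on opposite sides of $C$ near $N$. This is the standard fact that a Jordan curve through $N$, made of two rays near $N$, separates the small disc into the two angular regions on either side.

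Hence the curve $\gamma_-$ together with its radial extensions must cross $C$ away from $N$. These pieces lie in $U_-$, while $C\setminus\{N\}\subset U_+$, so they cannot meet. This contradiction shows that $f(N)\ne0$ or $df_N\ne0$.

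\emph{Main obstacle.} The delicate step is making the local side-separation rigorous: we must ensure that $q_1$ and $q_2$ really lie in different complementary components of $C$. I would handle this with a small-circle crossing argument. Take a circle $\partial B_r$ around $N$ with $r$ small enough that $\gamma_\pm$ stay outside $B_r$ except for their radial ends. The sectors in cyclic order are $+,-,+,-$, so on $\partial B_r$ the points of $C$ separate $q_1$ from $q_2$. The arc of $\partial B_r$ from $q_1$ to $q_2$ then crosses $C$ exactly once, transversally, which places $q_1$ and $q_2$ in different components of $U\setminus C$.
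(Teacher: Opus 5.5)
Your argument is correct and is essentially the paper's proof: assume the vanishing order is $j\ge2$, build a Jordan curve $C\subset U_+\cup\{N\}$ from a simple arc in $U_+$ closed up by two short radial segments at $N$, and use the connectedness of $U_-$ to rule out two negative sectors lying on opposite sides of $C$. The paper takes the segments along the bisectors of the positive sectors, which is what guarantees they stay inside those sectors, a choice your ``these segments lie in their sectors'' needs. The only real difference is the local separation step. The paper uses that both components of $U\setminus C$ have $N$ in their closure, so the two sectors of $B\setminus C$ lie in different components. Your single transversal crossing on $\partial B_r$ rests on the same boundary property, applied at a point of a radial segment. Finally, the radial extensions of $\gamma_-$ to $N$ are unnecessary, since $\gamma_-\subset U_-$ already misses $C$.
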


\begin{proof}
We argue by contradiction. Suppose that $f(N)=0$ and $df_N=0$. By the facts on nodal sets in \S\ref{ssec:types}, $f$ vanishes at $N$ to a finite
order $j\ge2$. In a conformal coordinate $z$ centered at $N$, we have $f=\operatorname{Re}(\alpha z^j)+O(|z|^{j+1})$ with
$\alpha\ne0$, and near $N$ the zero set of $f$ consists of $2j\ge4$ arcs tangent to the rays on which
$\operatorname{Re}(\alpha z^j)=0$. Let $\Theta_1,\dots,\Theta_{2j}$ be the sectors between consecutive arcs, in cyclic order,
with $f>0$ on $\Theta_1$. Then $\Theta_1\cup\Theta_3\subset U_+$ and $\Theta_2\cup\Theta_4\subset U_-$.

For small $\delta>0$, consider the segments $\beta_1$ and $\beta_3$ of length $\delta$ from $N$ along the rays bisecting the
sectors of $\{\operatorname{Re}(\alpha z^j)>0\}$ which correspond to $\Theta_1$ and $\Theta_3$. They lie in
$\Theta_1\cup\{N\}$ and $\Theta_3\cup\{N\}$, respectively. We join their endpoints by a simple arc in $U_+$ and pass to the
part of this arc between its last point on $\beta_1$ and the next point on $\beta_3$. Together with subsegments of
$\beta_1$ and $\beta_3$, it forms a Jordan curve $C\subset U_+\cup\{N\}$ which, near $N$, consists of the two segments.

By Lemma~\ref{lem:star}(ii), $U$ is homeomorphic to an open disc. Thus, by the Jordan curve theorem, $U\setminus C$ has
two components, each of which has $N$ in its closure. A small disc $B$ around $N$ is divided by $C$ into two sectors.
Each of them lies in one component of $U\setminus C$, and each component meets $B\setminus C$; hence, the two sectors lie
in different components. One of them contains points of $\Theta_2$ and the other points of $\Theta_4$. However, $U_-$ is
connected, contains $\Theta_2\cup\Theta_4$, and does not meet $C$. That is a contradiction.
\end{proof}

\subsection{The lower bound}
We now carry out Step~1 of the outline.

\begin{proposition}\label{prop:lower}
For every irreducible representation $\pi$ of $D_b$, we have $\ind_{\pi^+}(S)\ge\dim\pi-[\pi=A_2]$, where $[\pi=A_2]$ is
$1$ if $\pi=A_2$ and $0$ otherwise. Furthermore, $\ind_{A_1^-}(S)\ge1$. In particular, $\Ind(\Sig)\ge2b$.
\end{proposition}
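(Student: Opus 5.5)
The plan is to apply Lemma~\ref{lem:iso} to a $G$-invariant subspace $V$ built from nodal-domain pieces of rotation Jacobi fields, and then read off the multiplicities.

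\textbf{The $\sigma_h$-even sector.} Take the Jacobi field $R_{e_3}$, the normal part of the rotations about $\ell$. It is not identically zero, since $\Sig$ is not rotationally symmetric about $\ell$ (the boundary components are permuted). By Lemma~\ref{lem:sym}(iii), $\gamma\cdot R_{e_3}=\det(\gamma)R_{\gamma e_3}$. Hence $R_{e_3}$ has type $A_2^+$: it is $\Rot$-invariant, odd under vertical reflections, and even under $\sigma_h$.

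It therefore vanishes on each vertical mirror plane. Intersecting $\Sig$ with the $b$ mirror planes cuts $\Sig$ into pieces; I expect exactly $2b$ pieces, one for each element of $D_b$. The group $D_b$ permutes these pieces simply transitively, and each piece is $\sigma_h$-invariant.

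Let $\Om$ be one piece and set $u_\gamma=R_{e_3}\mathbf 1_{\gamma\Om}$. The key claim is that $R_{e_3}$ does not vanish identically on any piece. Granting this, Lemma~\ref{lem:pieces} applies with $u=R_{e_3}$ (possibly after refining the pieces to nodal domains, which only enlarges $V$). The span $V$ of the $u_\gamma$ is then a copy of the regular representation twisted by the signs: $\gamma\cdot u_{\gamma'}=\pm u_{\gamma\gamma'}$, with the sign given by $\det$. So $V\cong\cR_b\otimes A_2\cong\cR_b$ as a $D_b$-module, and $\sigma_h$ acts trivially, giving $V\cong\cR_b^+$. Moreover $S\le0$ on $V$ and $V\cap\ker S=\R R_{e_3}$, which has type $A_2^+$. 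Lemma~\ref{lem:iso} then gives $\ind_{\pi^+}(S)\ge\dim\pi-[\pi=A_2]$.

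To check that the mirror planes really produce $2b$ pieces with $R_{e_3}\not\equiv0$ on each, I would argue as follows. Each piece contains an open set on which $JR_{e_3}=0$, so if $R_{e_3}$ vanished there it would vanish everywhere by unique continuation. It remains to confirm that $\Sig$ minus the mirror planes has at least $2b$ components permuted freely. This follows because a fundamental domain of $D_b$ acting on $\Sig$ is a connected region bounded by mirror arcs, and the action on the complement of the mirrors is free.

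An alternative is to invoke Theorem~\ref{thm:mirrors} directly, but I need the type-by-type version, so I prefer the construction above.

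\textbf{The odd sector.} I would take $f=\nu_{e_3}$. By Lemma~\ref{lem:sym}(iii), $\gamma\cdot\nu_{e_3}=\nu_{\gamma e_3}$, so $\nu_{e_3}$ is $D_b$-invariant and $\sigma_h$-odd, i.e.\ of type $A_1^-$. Lemma~\ref{lem:basic}(iv) gives $S(\nu_{e_3},\nu_{e_3})=-2\int\nu_{e_3}^2<0$. The line $\R\nu_{e_3}$ is then an invariant subspace on which $S<0$, and Lemma~\ref{lem:iso} (with trivial kernel intersection) yields $\ind_{A_1^-}\ge1$. We need $\nu_{e_3}\not\equiv0$; otherwise $\Sig$ would be a vertical cylinder piece, which is impossible for a compact minimal surface not contained in a plane.

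\textbf{Conclusion and main obstacle.} Summing gives $\Ind(\Sig)\ge(2b-1)+1=2b$. The main obstacle I expect is the topological claim that the vertical mirror planes cut $\Sig$ into $2b$ pieces on which $D_b$ acts freely. This is exactly the part handled by Carlotto--Schulz--Wiygul, and I would cite their argument \cite[Prop.~4.2]{CSW25a} for it if the direct argument becomes delicate.
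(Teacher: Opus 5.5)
Your proposal is correct and follows the paper's proof. You cut $R_{e_3}$ (type $A_2^+$) along the vertical mirror planes to get a copy of $\cR_b^+$ on which $S=0$ and $V\cap\ker S=\R R_{e_3}$, apply Lemma~\ref{lem:iso}, and use $\nu_{e_3}$ for $A_1^-$. The topological step you flag as the main obstacle is not needed, because Lemma~\ref{lem:pieces} does not require connected pieces: the paper simply takes $\Om_k=\Sig\cap W_k$ for the $2b$ open wedges $W_k$ between the mirror planes, so the simply transitive action of $D_b$ comes directly from its action on the wedges in $\R^3$.
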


\begin{proof}
Let $R=R_{e_3}$. As $\Rot$ permutes the boundary components, $\Sig$ is not rotationally symmetric about $\ell$; hence, $R\not\equiv0$. By Lemma~\ref{lem:sym}(iii),
\[
  \Rot\cdot R=R,\qquad s\cdot R=-R\ \text{ for the vertical reflections $s$},\qquad \sigma_h\cdot R=R.
\]
That is, $R$ has type $A_2^+$. 

Let $W_1,\dots,W_{2b}$ be the open wedges between the $b$ vertical
mirror planes, and let
\[
  u_k=R\,\mathbf 1_{\Sig\cap W_k}.
\]
Since $s\cdot R=-R$, the function $R$ vanishes on $\Sig\cap P$ for every vertical mirror
plane $P$. Since $|R|$ is $D_b$-invariant, $R\not\equiv0$, and $D_b$ permutes the wedges transitively, $R\not\equiv0$ on
each $\Sig\cap W_k$. Moreover, these sets have full measure in $\Sig$, as $\Sig$ is not contained in a plane. By
Lemma~\ref{lem:pieces}, the functions $u_k$ span a $2b$-dimensional space $V$ with $S|_V=0$ and $V\cap\ker S=\R R$. The group $D_b$ permutes the $2b$ wedges simply
transitively; for $\gamma\in D_b$, let $\gamma(k)$ denote the index with $\gamma(W_k)=W_{\gamma(k)}$. Since
$\mathbf 1_{\Sig\cap W_k}\circ\gamma^{-1}=\mathbf 1_{\Sig\cap W_{\gamma(k)}}$ and $\gamma\cdot R=\det(\gamma)R$, we have
\[
\gamma\cdot u_k=\det(\gamma)\,u_{\gamma(k)} .
\]
Thus, $V$ is the permutation representation of $D_b$ on the wedges, twisted by $A_2$, since $\det\gamma=\chi_{A_2}(\gamma)$ for $\gamma\in D_b$. As the action is
simply transitive, this permutation representation is the regular one (\S\ref{ssec:reps}(c)). Moreover, $\sigma_h$
preserves each wedge and fixes each $u_k$. Therefore, $V\cong(A_2\otimes\cR_b)^+\cong\cR_b^+$, and
Lemma~\ref{lem:iso} gives
\[
  \ind_{\pi^+}(S)\ge m_\pi(\cR_b)-m_\pi(A_2)=\dim\pi-[\pi=A_2].
\]
Summation over all irreducible representations yields
\[
  \sum_\pi\bigl(\dim\pi-[\pi=A_2]\bigr)\dim\pi=2b-1 .
\]
Next, the function $\nu_{e_3}$ has type $A_1^-$ and $\nu_{e_3}\neq 0$ because $\Sig$ is not a flat disc, and
$S(\nu_{e_3},\nu_{e_3})<0$ by Lemma~\ref{lem:basic}(iv). Thus, $\ind_{A_1^-}(S)\ge1$, which gives $\Ind(\Sig)\ge2b$.
\end{proof}

\subsection{Area and energy}\label{ssec:EM}
In this subsection, we establish an equivariant version of the comparison between the second variations of area and
energy \cite{EM08,Lim22}. Here $\Sig$ is any FBMS with induced metric $g$, and $H$ is a finite
group of symmetries of $\Sig$ with $\nu\circ\gamma=\gamma\nu$ for $\gamma\in H$. For an $H$-module $W$, possibly of
infinite dimension, $m_\pi(W)$ denotes the supremum of the multiplicities of $\pi$ in the finite-dimensional submodules
of $W$.

Let $\cV$ be the space of smooth maps $V:\Sig\to\R^3$ with $\langle V,X\rangle=0$ on $\bdSig$, and let
\[
  Q(V,V)=\int_\Sig|\nabla V|^2\dA-\int_{\bdSig}|V|^2\ds .
\]
The group $H$ acts on $\cV$ by $(\gamma\cdot V)(x)=\gamma V(\gamma^{-1}x)$, preserving $Q$. Let $\ind_\pi(Q)$ (respectively, $\ind^0_\pi(Q)$) be the largest
multiplicity of $\pi$ in a finite-dimensional $H$-invariant subspace of $\cV$ on which $Q$ is negative definite ($Q\le0$). 
Furthermore, let
$\cV_1\subset\cV$ be the space of maps $V\in\cV$ with $\Delta V=0$ in $\Sig$ and $\partial_\eta V=V$ on $\bdSig$; that is,
their components are Steklov eigenfunctions with eigenvalue one.

\begin{lemma}\label{lem:energy}
Suppose that $\sigma_1(\Sig)=1$. Then, for every irreducible representation $\pi$ of $H$,
\[
  \ind_\pi(Q)\le m_\pi(\R^3)\qquad\text{and}\qquad\ind^0_\pi(Q)\le m_\pi(\R^3)+m_\pi(\cV_1),
\]
where $H$ acts on $\R^3$ by its defining representation, that is, through the inclusion $H\subset O(3)$.
\end{lemma}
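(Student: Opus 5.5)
Since $Q$ acts componentwise, we have $Q(V,V)=\sum_{i=1}^3 Q_0(V_i,V_i)$ with $Q_0(u,u)=\int_\Sig|\nabla u|^2-\int_{\bdSig}u^2$, but the constraint $\langle V,X\rangle=0$ on $\bdSig$ couples the components. The plan is to split off the constant maps. Let $\cC\cong\R^3$ be the constant maps, and for $V\in\cV$ write $V=\bar V+V^0$, where $\bar V=\frac1L\int_{\bdSig}V\ds$ and $V^0$ has boundary mean zero. The map $V\mapsto\bar V$ is $H$-equivariant into $\R^3$ with the defining representation. Here we use that $H\subset O(3)$ acts on $\Sig$ and commutes with averaging over $\bdSig$, since $\gamma$ preserves $\bdSig$ and arclength.

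For each component $V^0_i$, the variational characterization of $\sigma_1(\Sig)=1$ gives $Q_0(V^0_i,V^0_i)\ge0$, with equality exactly when $V^0_i$ is a Steklov eigenfunction with eigenvalue one. Hence $Q(V^0,V^0)\ge0$, with equality iff $V^0\in\cV_1$ in the sense of components. The boundary condition on $V^0$ itself is not needed, since $\sigma_1$ gives positivity without it. One small point: elements of $\cV_1$ are required to satisfy $\langle V,X\rangle=0$ on $\bdSig$, while $V^0$ need not. So I would compare with the larger equivariant space $\cV_1'$ of all $\R^3$-valued Steklov-one maps, and then check that the kernel argument below actually lands in $\cV$.

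Now let $W\subset\cV$ be a finite-dimensional $H$-invariant subspace with $Q<0$ on $W\setminus\{0\}$. Consider the equivariant map $p:W\to\R^3$, $p(V)=\bar V$. If $p(V)=0$ then $V=V^0$, so $Q(V,V)\ge0$, forcing $V=0$. Thus $p$ is injective, and by Schur (isotypic multiplicities are monotone under equivariant injections) $m_\pi(W)\le m_\pi(\R^3)$. This gives the first bound.

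For $Q\le0$ on $W$, the kernel $K=\ker p\cap W$ consists of $V=V^0$ with $Q(V,V)\le0$, hence $Q(V,V)=0$. So each component is a Steklov-one eigenfunction, and since $V\in\cV$ we get $V\in\cV_1$. Thus $K\subset\cV_1$ is $H$-invariant, and from the exact sequence $0\to K\to W\to p(W)\to0$ of $H$-modules, complete reducibility gives $m_\pi(W)=m_\pi(K)+m_\pi(p(W))\le m_\pi(\cV_1)+m_\pi(\R^3)$.

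The main subtlety is that the decomposition $V=\bar V+V^0$ need not keep $\bar V$ or $V^0$ inside $\cV$. This is why I avoid decomposing $W$ itself and only use the linear map $p$ and its kernel, which stays inside $W\subset\cV$. The remaining check is the equivariance of $p$: $\overline{\gamma\cdot V}=\frac1L\int_{\bdSig}\gamma V(\gamma^{-1}x)\ds=\gamma\bar V$.
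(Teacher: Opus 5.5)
Your proposal is correct and follows essentially the same route as the paper. Both arguments use the $H$-equivariant boundary-average map $W\to\R^3$, whose kernel consists of maps with boundary-mean-zero components on which $\sigma_1=1$ forces $Q\ge0$. That kernel is therefore trivial when $Q$ is negative definite, and it lies in $\cV_1$ (because it sits inside $W\subset\cV$) when $Q\le0$, which is exactly how the paper concludes.
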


\begin{proof}
Let $W\subset\cV$ be a finite-dimensional $H$-invariant subspace with $Q\le0$ on $W$. The map
\[
  \beta:W\to\R^3,\qquad V\mapsto\int_{\bdSig}V\ds,
\]
is $H$-equivariant. We claim that $\ker\beta\subset\cV_1$. 

Indeed, let $V\in\ker\beta$. Then
each component $V_i$ has boundary mean zero; thus,
\[
  \int_\Sig|\nabla V_i|^2\ge\sigma_1\int_{\bdSig}V_i^2=\int_{\bdSig}V_i^2.
\]
Since $Q(V,V)\le0$, equality holds for each $i$. Hence, $V_i$ is zero or minimizes the Steklov Rayleigh quotient among
functions of boundary mean zero; in both cases, $\Delta V_i=0$ and $\partial_\eta V_i=V_i$. This proves the claim and,
consequently, $m_\pi(W)\le m_\pi(\R^3)+m_\pi(\cV_1)$. Moreover, the argument shows $Q(V,V)=0$ for $V\in\ker\beta$. So if
$Q$ is negative definite on $W$, then $\ker\beta=0$ and $m_\pi(W)\le m_\pi(\R^3)$.
\end{proof}


For a symmetric $2$-tensor $h$ on $\Sig$, let $h^\circ=h-\frac12(\tr_gh)\,g$ be its trace-free part. Let $\mathcal B_\Sig$
be the space of smooth trace-free symmetric $2$-tensors on $\Sig$, and let $\mathcal L_\Sig\subset\mathcal B_\Sig$ be the
subspace of tensors $(L_Yg)^\circ$, where $Y$ is a smooth vector field on $\Sig$ tangent to $\bdSig$ along $\bdSig$ and
$L_Y$ denotes the Lie derivative. We put
\[
  \cT_\Sig=\mathcal B_\Sig/\mathcal L_\Sig ,
\]
and call its elements the \emph{Teichm\"uller directions} of $\Sig$. Intuitively, they are the infinitesimal deformations
of the conformal structure of $\Sig$ modulo diffeomorphisms preserving $\bdSig$. The elements of $H$ are isometries of
$\Sig$ preserving $\bdSig$. Thus, $H$ acts on $\mathcal B_\Sig$ by push-forward, preserving $\mathcal L_\Sig$, and hence on
$\cT_\Sig$. The second fundamental form $A$ lies in $\mathcal B_\Sig$ and satisfies $\gamma_*A=A$ for $\gamma\in H$ by
Lemma~\ref{lem:sym}(iii). Consequently, the map
\[
  \Phi:C^\infty(\Sig)\to\cT_\Sig,\qquad \Phi(f)=[fA],
\]
is $H$-equivariant; that is, $\gamma_*(fA)=(\gamma\cdot f)A$. The following refines an upper bound due to Lima \cite[Thm.~A]{Lim22}; see \cite{EM08} for a closed surface. 
We follow their approach; the differences are that the
tangential field is only required to be tangent to $\bdSig$, which identifies the obstruction space with
$\cT_\Sig$ as an $H$-module, and that the correction is chosen $H$-equivariantly.

\begin{proposition}\label{prop:EM}
For every irreducible representation $\pi$ of $H$,
\[
  \ind_\pi(S)\ \le\ \ind_\pi(Q)+m_\pi(\cT_\Sig)\qquad\text{and}\qquad
  \ind_\pi(S)+\operatorname{null}_\pi(S)\ \le\ \ind^0_\pi(Q)+m_\pi(\cT_\Sig).
\]
\end{proposition}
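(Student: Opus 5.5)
Following Ejiri--Micallef and Lima, the plan is to build an $H$-equivariant linear map from normal variations to ambient variations that intertwines $S$ with $Q$, and then count multiplicities. Let $\mathcal N_\pi$ be an $H$-invariant subspace on which $S$ is negative definite, realizing $\ind_\pi(S)$; for the second inequality, take $S\le0$ and realize $\ind_\pi(S)+\operatorname{null}_\pi(S)$ by the span of the nonpositive eigenspaces. Let $K=\mathcal N_\pi\cap\ker\Phi$. It is $H$-invariant because $\Phi$ is equivariant. Then
\[
m_\pi(\mathcal N_\pi)\le m_\pi(K)+m_\pi(\Phi(\mathcal N_\pi))\le m_\pi(K)+m_\pi(\cT_\Sig).
\]
So it suffices to show $m_\pi(K)\le\ind_\pi(Q)$, respectively $\le\ind^0_\pi(Q)$. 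Eigenfunctions are smooth up to $\bdSig$, so $\Phi$ is defined on them.

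\emph{Step 1: the equivariant correction.} For $f\in K$ we have $fA=(L_Yg)^\circ$ for some smooth $Y$ tangent to $\bdSig$; up to sign conventions, $-2fA$ is the trace-free part of the first variation of the induced metric under $f\nu$. This $Y$ is unique only modulo conformal Killing fields tangent to $\bdSig$. For genus zero with $b\ge2$ that space is finite-dimensional and possibly nonzero; in general we do not know it is zero. I would therefore choose the correction linearly and equivariantly, as follows. Take a basis of $K$ and arbitrary lifts $Y(f)$ on it, extend linearly, and average: $\tilde Y(f)=\frac1{|H|}\sum_\gamma\gamma_*^{-1}Y(\gamma\cdot f)$. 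This still solves $(L_{\tilde Y}g)^\circ=fA$, because $\gamma_*(fA)=(\gamma\cdot f)A$ and the equation is linear and natural. Set $V_f=f\nu+\tilde Y(f)$. Then $V_f\in\cV$, since $\langle\nu,X\rangle=0$ and $\tilde Y$ is tangent to $\bdSig$, hence orthogonal to $X=\eta$. By Lemma~\ref{lem:sym}(iii), $f\mapsto V_f$ is $H$-equivariant: $\gamma\cdot(f\nu)=(\gamma\cdot f)\nu$.

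\emph{Step 2: the identity $Q(V_f,V_f)=S(f,f)$.} For an infinitesimally conformal variation, the second variation of energy equals that of area; this is the Ejiri--Micallef computation, done with free boundary terms by Lima. I would redo it by writing $E(V)=A(V)+\frac12\int|(\text{variation of metric})^\circ|^2$-type terms. The first-order trace-free metric variation of $V_f$ vanishes by construction, so the difference of the second variations reduces to that term, which is zero. Tangential parts contribute only boundary terms. These cancel because $\tilde Y$ is tangent to $\bdSig$ and the constraint is $\Sph$ with $\partial_\eta$ terms giving $-\int_{\bdSig}|V|^2$, consistent with \eqref{eq:S}. I expect this boundary bookkeeping to be the main obstacle, and I would lean on Lima's Thm.~A computation, checking only that it uses tangency of $Y$ and nothing more.

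\emph{Step 3: conclusion.} The map $f\mapsto V_f$ is injective, since its normal part is $f$. Its image is therefore an $H$-invariant subspace isomorphic to $K$, on which $Q$ is negative definite (respectively $\le0$). By definition, $m_\pi(K)\le\ind_\pi(Q)$ (respectively $\ind_\pi^0(Q)$), and combining with the displayed inequality gives both claims.
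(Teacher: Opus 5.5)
Your proposal is correct and follows the paper's proof essentially step for step. Both use the splitting $m_\pi(\mathcal N')\le m_\pi(\mathcal N'\cap\ker\Phi)+m_\pi(\cT_\Sig)$, with $\mathcal N'$ the negative (respectively nonpositive) eigenspace, then average to make the tangential correction $H$-equivariant, and finally use the conformal-variation identity $Q(V_f,V_f)=S(f,f)$, which the paper quotes from \cite[Lemma~6.1]{FS16} under exactly the hypotheses you verify: $V_f$ conformal along $\Sig$ and tangent to $\Sph$ on $\bdSig$.

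There is one normalization slip to fix. As you note, $f\nu$ contributes $-2fA$ to the trace-free metric variation, so $V_f$ is conformal only if $(L_{\tilde Y}g)^\circ=2fA$. You should therefore take $V_f=f\nu+2\tilde Y(f)$. This is harmless, since $\ker\Phi$ is a linear condition.
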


\begin{proof} Let $f\in C^\infty(\Sig)$, and suppose that $Y$ is a smooth vector field on $\Sig$, tangent to $\bdSig$ along
$\bdSig$, such that $(L_Yg)^\circ=2fA$. Let $V=f\nu+Y$. Then $V\in\cV$, since $\nu$ and $Y$ are orthogonal to $\eta=X$
on $\bdSig$, and, by \cite[Lemma~6.1]{FS16},\[Q(V,V)=S(f,f).\]
Indeed, in their terminology $V$ is a conformal vector field
exactly when $(L_Yg)^\circ=2fA$, since $\langle D_{e_i}V,e_j\rangle=-fA(e_i,e_j)+\langle\nabla_{e_i}Y,e_j\rangle$ and
$\tr A=0$, and $V$ is tangent to $\Sph$ along $\bdSig$. \\

Let $\mathcal N$ be the negative eigenspace of $S$, and let $\mathcal N'$ be either $\mathcal N$ or
$\mathcal N\oplus\ker S$. Then $\mathcal N'$ is finite-dimensional, $H$-invariant, and consists of smooth functions.
Moreover, \[m_\pi(\mathcal N)=\ind_\pi(S) \text{ and } m_\pi(\mathcal N\oplus\ker S)=\ind_\pi(S)+\operatorname{null}_\pi(S).\] Let
$K=\mathcal N'\cap\ker\Phi$. As $\mathcal N'\cong K\oplus\Phi(\mathcal N')$ as $H$-modules,
\[
  m_\pi(\mathcal N')\le m_\pi(K)+m_\pi(\cT_\Sig).
\]
For $f\in K$, there is a smooth vector field $Y$, tangent to $\bdSig$, with $(L_Yg)^\circ=2fA$. Choosing such fields on a
basis of $K$ and extending linearly, we obtain a linear map $f\mapsto Y_f$. We may assume that it is $H$-equivariant;
indeed, we may replace it by $f\mapsto|H|^{-1}\sum_{\gamma\in H}\gamma_*Y_{\gamma^{-1}\cdot f}$, since
$(L_{\gamma_*Y}g)^\circ=\gamma_*(L_Yg)^\circ$ and $\gamma_*(fA)=(\gamma\cdot f)A$. 

Since $\gamma\cdot(f\nu)=(\gamma\cdot f)\nu$,
the map $f\mapsto V_f=f\nu+Y_f$ from $K$ to $\cV$ is linear, injective, and $H$-equivariant. By the calculation above and polarization,
for $f,f'\in K$, \[Q(V_f,V_{f'})=S(f,f').\] Thus, if $\mathcal N'=\mathcal N$, $Q$ is negative definite on the image, which is isomorphic to $K$; $Q\le0$ in the other case. Consequently, $m_\pi(K)\le\ind_\pi(Q)$ (respectively
$\ind^0_\pi(Q)$) and the result follows.
\end{proof}

\subsection{Teichm\"uller directions}\label{ssec:teich}
Here we bound the Teichm\"uller directions of $\Sig\in\cS_b$ of each symmetry type. Towards that end, we
realize $\cT_\Sig$ as a quotient of the space of infinitesimal motions of the caps of a circle domain. This is the most technical part of the paper. 

Let
$\Gamma_1,\dots,\Gamma_b$ be the boundary components of $\Sig$, and let $s_i\in D_b$ be the reflection with
$s_i(\Gamma_i)=\Gamma_i$ (see \S\ref{ssec:reps}); its mirror plane $P_i$ contains $\ell$. The following normal form is
the analogue for $\cS_b$ of the canonical model of \cite[\S4]{ChFS25}.

\begin{lemma}\label{lem:model}
There exist a circle domain $\Om\subset\Sph$ with caps $D_1,\dots,D_b$ and a diffeomorphism $F:\Sig\to\overline\Om$,
conformal for the induced metric, such that $F\circ\gamma=\gamma\circ F$ for all $\gamma\in G$ and $F(\Gamma_i)=\partial D_i$
for all $i$. Moreover, $\gamma(D_i)=D_j$ whenever $\gamma(\Gamma_i)=\Gamma_j$, and the center $c_i$ of $D_i$ lies on the
equator $\ell^\perp\cap\Sph$ and in $P_i$.
\end{lemma}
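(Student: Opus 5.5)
Start from Lemma~\ref{lem:koebe} applied to $G=\Dbh$. It gives a circle domain $\Om_0$, a conformal diffeomorphism $F_0:\Sig\to\overline{\Om_0}$, and an injective homomorphism $\gamma\mapsto\hat\gamma\in O(3)$ with $\det\hat\gamma=\det\gamma$ and $F_0\circ\gamma=\hat\gamma\circ F_0$. The first task is to show that this homomorphism is conjugate in $O(3)$ to the identity inclusion $\Dbh\subset O(3)$. Then, replacing $F_0$ by $F=h\circ F_0$ with $h\in O(3)$ chosen so that $h\hat\gamma h^{-1}=\gamma$, we get exact equivariance. The image of a circle domain under $h$ is still a circle domain.

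To identify the abstract action, I would use the geometry of $\hat G=\{\hat\gamma\}$. Each reflection in $\Dbh$ has determinant $-1$ and order two, so its image is an orientation-reversing involution in $O(3)$. Such an element is either a plane reflection or $-I$. It cannot be $-I$: the antipodal map is central, but the vertical reflections do not commute with one another for $b\ge3$. The case $\sigma_h$ needs separate handling, and I would use the fixed-point sets. A reflection $\gamma$ of $\Dbh$ fixes the curve $\Sig\cap P$ pointwise, which is nonempty by the proof of Lemma~\ref{lem:sym}(ii). Hence $\hat\gamma$ has fixed points on $\overline{\Om_0}\subset\Sph$, which $-I$ does not have. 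So every generator maps to a plane reflection. Moreover, $\widehat{\Rot}$ is a rotation of order $b$, and $\hat\sigma_h$ is a reflection commuting with all of $\hat D_b$.

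It follows that the $b$ mirror planes of $\hat D_b$ contain the axis $\hat\ell$ of $\widehat{\Rot}$, with consecutive angles $\pi/b$. The mirror of $\hat\sigma_h$ must be $\hat\ell^\perp$, since it is the only plane reflection commuting with a rotation of order $b\ge3$ about $\hat\ell$. A rotation $h$ then carries $\hat\ell$ to $\ell$ and the mirrors to the mirrors, matching generators. Two small points need checking. First, the pairing between particular mirror planes is forced by the abstract group relations. Second, if $\widehat{\Rot}$ turns out to be rotation by $-2\pi/b$, an extra reflection fixes this, still keeping $h\in O(3)$.

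For the remaining assertions, set $D_i$ to be the cap bounded by $F(\Gamma_i)$. Equivariance immediately gives $\gamma(D_i)=D_j$ whenever $\gamma(\Gamma_i)=\Gamma_j$, because a homeomorphism of the sphere sends complementary discs to complementary discs. The reflection $s_i$ preserves $D_i$, so it fixes the center $c_i$ of this round disc; hence $c_i\in P_i$. Likewise $\sigma_h$ preserves each $D_i$, since it preserves each $\Gamma_i$, so $c_i$ lies in $\{x_3=0\}$. Together, $c_i\in P_i\cap\ell^\perp\cap\Sph$.

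The main obstacle is the first step: ruling out a twisted identification of the abstract action with the standard one. In particular, I must exclude $\hat\sigma_h=-I$, and make sure the image is conjugate to $\Dbh$ rather than to another group of order $4b$ isomorphic to it. The fixed-curve argument is what I plan to rely on for that.
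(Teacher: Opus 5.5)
There is a genuine gap in your first step, at what you call the first ``small point'': the abstract group relations do \emph{not} force the matching of mirrors, and the sign of the rotation angle is not the only ambiguity. You establish three things: each vertical $\hat s$ is a reflection in a plane through the axis $\hat\ell$ of $\hRot$, $\hat\sigma_h$ is the reflection in $\hat\ell^\perp$, and $\hRot$ has order $b$. These determine the image $\hat G$ up to conjugacy, but not the homomorphism $\gamma\mapsto\hat\gamma$. The rotation $\hRot$ is a rotation by $2\pi k/b$ for some $k$ prime to $b$, and for $b=5$ and every $b\ge7$ one can have $k\not\equiv\pm1\pmod b$. Concretely, take the automorphism of $\Dbh$ that fixes a vertical reflection $s$ and $\sigma_h$ and sends $\Rot$ to $\Rot^k$, and compose the inclusion $\Dbh\subset O(3)$ with it. The result is an injective homomorphism with $\det\hat\gamma=\det\gamma$, plane reflections as images of reflections, $\hat\sigma_h=\sigma_h$, and image equal to $\Dbh$ itself. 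Yet it is not conjugate in $O(3)$ to the inclusion, since conjugation preserves $\tr\hRot=1+2\cos(2\pi k/b)\ne1+2\cos(2\pi/b)$. Nothing in your argument excludes this twisted action, and it matters: no $h$ would give $F\circ\gamma=\gamma\circ F$. It would also replace $E_1$ by some $E_k$, $k\ne1$, in the decomposition of $\mob$ used in Proposition~\ref{prop:teich}.

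The missing input has to detect the actual rotation angle, which is a geometric rather than group-theoretic fact. The paper gets it by differentiating $F_0\circ\Rot=\hRot\circ F_0$ at the fixed point $N\in U$ of Lemma~\ref{lem:star}(iii). There $d(\Rot)_N$ is the rotation of $T_N\Sig=\ell^\perp$ by $2\pi/b$. The conformal linear map $(dF_0)_N$ conjugates it to the rotation of $T_{F_0(N)}\Sph$ by the angle of $\hRot$, which is therefore $\pm2\pi/b$.

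The rest then goes through. Conjugate so that $\hRot=\Rot$, using a half-turn about a horizontal axis to fix the sign. Your fixed-point argument then gives $\hat\sigma_h=\sigma_h$, and each $\hat s$ is a reflection in a plane through $\ell$. A rotation about $\ell$, which commutes with $\Rot$ and $\sigma_h$, makes $\hat s=s$ for one vertical reflection $s$. Since $\Rot$, $s$, $\sigma_h$ generate $\Dbh$, this gives $\hat\gamma=\gamma$ for all $\gamma$.

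Alternatively, you could sharpen your own fixed-curve idea. The curves $\Sig\cap P$ and $\Sig\cap P'$ fixed by two adjacent vertical reflections pass through $N$ at angle $\pi/b$. Conformality of $F_0$ carries this angle to their images, which lie on the mirror circles of $\hat s$ and $\hat s'$, so these mirrors meet at angle $\pi/b$. The remaining part of your proposal (the caps $D_i$, $\gamma(D_i)=D_j$, and the location of the centers $c_i$) is correct and agrees with the paper.
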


\begin{proof}
Lemma~\ref{lem:koebe} gives $\Om$, $F$, and $\gamma\mapsto\hat\gamma$ with $F\circ\gamma=\hat\gamma\circ F$. Composing $F$ with
an element $h\in SO(3)$ preserves these properties and replaces $\hat\gamma$ by $h\hat\gamma h^{-1}$. We show that $h$ can be
chosen so that $\hat\gamma=\gamma$ for all $\gamma\in G$.

First, $\hRot$ is a rotation of order $b$, and it fixes $F(N)$, where $N$ is the point of Lemma~\ref{lem:star}(iii).
Differentiating $F\circ \Rot=\hRot\circ F$ at $N$, we see that the conformal linear map $dF_N$ conjugates $d(\Rot)_N$, the
rotation of $T_N\Sig=\ell^\perp$ by the angle $2\pi/b$, to the rotation of $T_{F(N)}\Sph$ by the rotation angle of
$\hRot$. Thus, $\hRot$ is a rotation by $\pm2\pi/b$ about some line. Composing $F$ with a rotation which maps this line onto
$\ell$ and then, if necessary, with the rotation by $\pi$ about a horizontal line, which conjugates $\Rot^{-1}$ to $\Rot$,
we may assume that $\hRot=\Rot$.

Next, the involution $\hat\sigma_h$ has determinant $-1$ and commutes with $\Rot$; thus, it is $-I$ or the reflection in a $\Rot$-invariant plane.
Since $\ell^\perp$ is the only plane invariant under $\Rot$, $\hat\sigma_h$ is $-I$ or $\sigma_h$. It fixes the points of $F(\Lambda)$, where $\Lambda=\Sig\cap\{x_3=0\}\ne\emptyset$
(Lemma~\ref{lem:star}); hence, $\hat\sigma_h=\sigma_h$. 

For a vertical reflection $s\in G$, the involution $\hat s$ has
determinant $-1$ and satisfies
\[
  \hat s\Rot\hat s^{-1}=\Rot^{-1}\ne\Rot.
\]
So it is neither $-I$ nor $\sigma_h$; thus, it is a
reflection in a plane $P'$. It maps the axis $\ell$ of $\Rot$ to the axis of $\Rot^{-1}$; hence, $P'$ contains $\ell$ or is
orthogonal to it. The second case is excluded because $\hat s\ne\sigma_h$. Composing $F$ with a rotation about $\ell$,
which commutes with $\Rot$ and $\sigma_h$ (preserving $\hRot=\Rot$ and $\hat\sigma_h=\sigma_h$), we may assume $\hat s=s$. As $\Rot$, $s$, and $\sigma_h$ generate $G$, it follows
that $\hat\gamma=\gamma$ for all $\gamma\in G$.

We label the caps so that $F(\Gamma_i)=\partial D_i$. Since $\gamma(\overline\Om)=\overline\Om$ and
$\gamma(\partial D_i)=F(\gamma\Gamma_i)$, the element $\gamma$ maps $D_i$ to $D_j$ if $\gamma(\Gamma_i)=\Gamma_j$. In particular,
$\sigma_h$ and $s_i$ preserve $D_i$ and, thus, fix its center $c_i$. The fixed points of $\sigma_h$ and $s_i$ on $\Sph$ form
the equator and $P_i\cap\Sph$, respectively, and the result follows.
\end{proof}

Let $\cC$ be the set of $b$-tuples $\bigl((c_i',r_i')\bigr)_i\in(\Sph\times(0,\pi))^b$ for which the closed discs
$D(c_i',r_i')=\{p\in\Sph:\langle p,c_i'\rangle\ge\cos r_i'\}$ are pairwise disjoint. It is an open subset of a manifold of
dimension $3b$, and $\Om$ defines a point $\omega=\bigl((c_i,r_i)\bigr)_i\in\cC$. The group $G$ acts on $\cC$ by
$(\gamma\cdot\omega')_j=(\gamma c'_i,r'_i)$, where $\gamma(\Gamma_i)=\Gamma_j$. By Lemma~\ref{lem:model}, it fixes $\omega$;
thus, it acts linearly on
\[
  T_\omega\cC=\bigoplus_i\bigl(T_{c_i}\Sph\oplus\R\bigr).
\]

We now define a map from $T_\omega\cC$ to $\cT_\Sig$. Let $g_0$ be the round metric of $\Sph$, and define $\mathcal B_\Om$, $\mathcal L_\Om$, and $\cT_\Om=\mathcal B_\Om/\mathcal L_\Om$
for $(\overline\Om,g_0)$ as in \S\ref{ssec:EM}. Since $F$ is conformal and commutes with $G$, we have $F^*g_0=e^{2w}g$ for
a $G$-invariant function $w$. As $(L_Y(e^{2w}g))^\circ=e^{2w}(L_Yg)^\circ$ for every vector field $Y$, the map
$q\mapsto F_*(e^{2w}q)$ induces a $G$-equivariant isomorphism \[\cT_\Sig\cong\cT_\Om,\] where $G$ acts on $\cT_\Om$ by
push-forward. Thus, it suffices to define a map from $T_\omega\cC$ to $\cT_\Om$.

Towards that end, for $\dot\omega=\bigl((\dot c_i,\dot r_i)\bigr)_i\in T_\omega\cC$, let $\theta_{\dot\omega}:\partial\Om\to\R$ be given by
\[
  \theta_{\dot\omega}(p)=\dot r_i+\frac{\langle\dot c_i,p\rangle}{\sin r_i}\qquad\text{for }p\in\partial D_i .
\]

Let $\mathbf n(p)=(\cos r_i\,p-c_i)/\sin r_i$ be the unit normal of $\partial\Om$ in $\Sph$ pointing into $\Om$. Let
$\omega(t)$ be a curve in $\cC$ and $p(t)\in\partial D(c_i(t),r_i(t))$, both differentiable at $t=0$, with
$\omega(0)=\omega$, $\omega'(0)=\dot\omega$, and $p(0)=p$. Differentiating $\langle p(t),c_i(t)\rangle=\cos r_i(t)$ and
using $\langle p'(0),p\rangle=0$, we obtain
\begin{equation}\label{eq:normal}
  \langle p'(0),\mathbf n(p)\rangle=\theta_{\dot\omega}(p).
\end{equation}
Thus, $\theta_{\dot\omega}$ is the normal speed of the boundary circles, when the caps move with velocity $\dot\omega$.

For $\dot\omega\in T_\omega\cC$, we put
\[
  \Psi(\dot\omega)=\bigl[(L_Zg_0)^\circ\bigr]\in\cT_\Om ,
\]
where $Z$ is any smooth vector field on $\overline\Om$ with $\langle Z,\mathbf n\rangle=\theta_{\dot\omega}$ on $\partial\Om$.
Since two such fields differ by a field tangent to $\partial\Om$, the map \[\Psi:T_\omega\cC\to\cT_\Om\] is well defined and
linear. Intuitively, $\Psi(\dot\omega)$ is the variation of the conformal structure of $\overline\Om$ induced by moving
the caps with velocity $\dot\omega$; one may compare our framework with \cite[\S6]{ChFS25}. 

Let $\mob$ be the space of conformal vector fields on $\Sph$, on which $G$ acts by push-forward. For $\xi\in\mob$, let $\iota(\xi)\in T_\omega\cC$ be the velocity at $t=0$ of the curve
of centers and radii of the discs $\exp(t\xi)(D_i)$; this defines a linear map $\iota:\mob\to T_\omega\cC$.

\begin{lemma}\label{lem:psi}
The following statements hold:
\begin{enumerate}[label=\textup{(\roman*)},itemsep=2pt]
\item the map $\Psi$ is $G$-equivariant;
\item the map $\iota$ is $G$-equivariant and injective, and $\Psi\circ\iota=0$.
\end{enumerate}
\end{lemma}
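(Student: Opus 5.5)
For (i), I would check equivariance directly from the definitions. Let $\gamma\in G$ with $\gamma(\Gamma_i)=\Gamma_j$. By Lemma~\ref{lem:model}, $\gamma(D_i)=D_j$, $r_j=r_i$ and $c_j=\gamma c_i$. By definition $(\gamma\cdot\dot\omega)_j=(\gamma\dot c_i,\dot r_i)$. For $p\in\partial D_i$, since $\gamma$ is orthogonal,
\[
\theta_{\gamma\cdot\dot\omega}(\gamma p)=\dot r_i+\frac{\langle\gamma\dot c_i,\gamma p\rangle}{\sin r_j}=\theta_{\dot\omega}(p).
\]
Moreover, $\gamma$ maps the inward normal $\mathbf n$ of $\partial\Om$ at $p$ to $\mathbf n$ at $\gamma p$, because $\gamma$ preserves $\Om$. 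Hence, if $\langle Z,\mathbf n\rangle=\theta_{\dot\omega}$ on $\partial\Om$, then $\langle\gamma_*Z,\mathbf n\rangle=\theta_{\gamma\cdot\dot\omega}$. Since $\gamma$ is an isometry of $g_0$, we have $\gamma_*(L_Zg_0)^\circ=(L_{\gamma_*Z}g_0)^\circ$. Together these give $\gamma_*\Psi(\dot\omega)=\Psi(\gamma\cdot\dot\omega)$, so $\Psi$ is $G$-equivariant.

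For (ii), equivariance of $\iota$ follows the same way. For $\gamma\in G\subset O(3)$ we have $\exp(t\gamma_*\xi)=\gamma\exp(t\xi)\gamma^{-1}$, so $\exp(t\gamma_*\xi)(D_j)=\gamma\bigl(\exp(t\xi)(D_i)\bigr)$ whenever $\gamma D_i=D_j$. Differentiating at $t=0$, the center velocity is pushed by $\gamma$ and the radius velocity is unchanged. This is exactly the action on $T_\omega\cC$.

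To show $\Psi\circ\iota=0$, take $\xi\in\mob$, so that $L_\xi g_0=\lambda g_0$ and therefore $(L_\xi g_0)^\circ=0$. Applying \eqref{eq:normal} to the curve $p(t)=\exp(t\xi)(p)\in\partial\exp(t\xi)(D_i)$ gives $\langle\xi(p),\mathbf n(p)\rangle=\theta_{\iota(\xi)}(p)$. Thus $Z=\xi|_{\overline\Om}$ is an admissible field, and $\Psi(\iota(\xi))=[(L_\xi g_0)^\circ]=0$.

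For injectivity, suppose $\iota(\xi)=0$. Then $\theta_{\iota(\xi)}=0$, so $\xi$ is tangent to each of the $b\ge3$ circles $\partial D_i$. The flow of $\xi$ then moves each circle within itself, so each $\exp(t\xi)$ preserves every circle $\partial D_i$. Conformal fields are determined by their values on a circle: their restriction is a holomorphic vector field in a chart, and the identity principle applies. The main step is to show that a nonzero Möbius field cannot be tangent to three disjoint circles with pairwise disjoint discs.

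I would argue as follows. Let $\xi\neq0$ generate a one-parameter subgroup of $PSL(2,\C)$ preserving three disjoint circles. After conjugation, such a group is elliptic, hyperbolic, loxodromic, or parabolic. An invariant circle must then be a trajectory, or a union of trajectories and fixed points, of the flow. The cases go as follows:
\begin{itemize}
\item \emph{Loxodromic:} the flow has no closed circular trajectories.
\item \emph{Hyperbolic:} the invariant circles pass through both fixed points, so any two of them intersect.
\item \emph{Parabolic:} the invariant circles all pass through the fixed point.
\item \emph{Elliptic:} the invariant circles are the coaxal circles separating the two fixed points. Three disjoint such circles bound discs $D_i$ whose complements are nested annuli, so among any three, one disc contains the other two. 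This contradicts pairwise disjointness of the caps.
\end{itemize}
Hence $\xi=0$, and $\iota$ is injective.

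Alternatively, one could use the dimension count at $\omega$ together with the explicit configuration of $b$ equal caps on the equator from Lemma~\ref{lem:model}. The classification argument above is cleaner, though, and it is the step I expect to require the most care.
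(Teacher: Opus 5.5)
Your proof follows the paper's argument essentially step by step. Equivariance of $\Psi$ comes from $\gamma\mathbf n(p)=\mathbf n(\gamma p)$ and the orthogonality of $\gamma$. The identity $\theta_{\iota(\xi)}=\langle\xi,\mathbf n\rangle$ comes from \eqref{eq:normal}. Injectivity comes from the normal forms $z\mapsto e^{\kappa t}z$ and $z\mapsto z+t$ of one-parameter M\"obius flows. Two of your case justifications need repair, although the conclusions stand.

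\emph{Elliptic case.} It is false that among three discs bounded by circles $|z|=c$ one contains the other two; take $\{|z|\le1\}$, $\{|z|\le2\}$, $\{|z|\ge3\}\cup\{\infty\}$. The correct reason, as in the paper, is that each such closed disc contains $0$ or $\infty$. Hence two of any three intersect, contradicting the pairwise disjointness of the caps.

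\emph{Loxodromic case.} ``No closed circular trajectories'' does not by itself exclude an invariant circle made up of several orbits and fixed points. Argue instead that for $\operatorname{Re}\kappa\ne0$ every orbit accumulates at both $0$ and $\infty$. Since an invariant circle is closed and contains a non-fixed point, it passes through both $0$ and $\infty$, so any two invariant circles meet. This handles the hyperbolic and loxodromic cases together.

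Your remark that conformal fields are determined by their values on a circle is not used and can be dropped, as can the alternative dimension-count suggestion.
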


\begin{proof}
(i) Let $\gamma\in G$, $p\in\partial D_j$, and $\gamma(\Gamma_i)=\Gamma_j$. Then $\gamma^{-1}p\in\partial D_i$ and
$\gamma^{-1}\mathbf n(p)=\mathbf n(\gamma^{-1}p)$. Hence, if $\langle Z,\mathbf n\rangle=\theta_{\dot\omega}$ on $\partial\Om$,
then
\[
  \langle\gamma_*Z,\mathbf n\rangle(p)=\langle Z(\gamma^{-1}p),\mathbf n(\gamma^{-1}p)\rangle=\theta_{\dot\omega}(\gamma^{-1}p)
  =\dot r_i+\frac{\langle\gamma\dot c_i,p\rangle}{\sin r_i}=\theta_{\gamma\cdot\dot\omega}(p).
\]
Thus, $\Psi(\gamma\cdot\dot\omega)=[(L_{\gamma_*Z}g_0)^\circ]=\gamma_*\Psi(\dot\omega)$.

(ii) The flow $\exp(t\xi)$ consists of M\"obius transformations, which map the caps to pairwise disjoint closed round
discs. By \eqref{eq:normal}, applied to $p(t)=\exp(t\xi)(p)$, we have $\theta_{\iota(\xi)}=\langle\xi,\mathbf n\rangle$ on
$\partial\Om$. Hence, $\Psi(\iota(\xi))=[(L_\xi g_0)^\circ]=0$, as $\xi$ is conformal. Equivariance follows from
$\gamma\exp(t\xi)\gamma^{-1}=\exp(t\gamma_*\xi)$ and $\gamma\cdot\omega=\omega$.

It remains to show that $\iota$ is injective. Suppose that $\iota(\xi)=0$. Then $\xi$ is tangent to the $b\ge3$ circles
$\partial D_i$, and its flow preserves each of them. We identify $\Sph$ with $\hat\C=\C\cup\{\infty\}$ by a stereographic
projection. If $\xi\ne0$, then, after conjugation by a M\"obius transformation, its flow is 
\[z\mapsto e^{\kappa t}z \text{ with }
\kappa\in\C\setminus\{0\}, \text{ or } z\mapsto z+t,\] 
since a non-zero element of $\mathfrak{sl}(2,\C)$ is conjugate to a diagonal
or to a nilpotent matrix.

A circle preserved by the flow is a union of orbits and fixed points. For $z\mapsto z+t$, these
circles all contain $\infty$. For $\kappa\notin\mathrm i\R$, they all contain $0$ and $\infty$; they are the lines through
$0$ if $\kappa\in\R$, and there are none otherwise. Thus, for $z\mapsto z+t$ and for $\kappa\notin\mathrm i\R$, no two of
them are disjoint. For $\kappa\in\mathrm i\R$,
they are the circles $|z|=c$, and each closed disc bounded by one of them is of the form $\{|z|\le c\}$ or
$\{|z|\ge c\}\cup\{\infty\}$; thus, no three of these discs are pairwise disjoint. In every case, we reach a contradiction;
hence, $\xi=0$.
\end{proof}

For the surjectivity of $\Psi$, we fix a stereographic projection of $\Sph$ onto $\hat\C$ which maps an interior point of
$D_1$ to $\infty$. Then $\overline\Om\subset\C$,
\[
  D_1=\{|z-a_1|\ge\rho_1\}\cup\{\infty\},\qquad D_i=\{|z-a_i|\le\rho_i\}\quad(i\ge2),
\]
and $g_0=\varrho\,|dz|^2$ with $\varrho>0$. 
Here $a_i$ and $\rho_i$ are the Euclidean centers and radii in $\C$, while the
points of $\cC$ are given by spherical centers and radii as before. Vector fields on $\overline\Om$ are identified with
tangent vector fields of $\Sph$ by the stereographic projection. As usual, we identify a vector field
$Z^1\partial_x+Z^2\partial_y$ on $\overline\Om$ with the complex-valued function $Z=Z^1+\mathrm iZ^2$ of its components;
in complex notation, the vector field is $Z\,\partial_z+\bar Z\,\partial_{\bar z}$. Inner products of tangent vectors are
taken with respect to $g_0$. Since $L_Z\,dz=dZ$, a direct computation gives
\begin{equation}\label{eq:Lie}
  (L_Zg_0)^\circ=2\varrho\operatorname{Re}\bigl(\overline{\partial_{\bar z}Z}\,dz^2\bigr).
\end{equation}
Moreover, every $q\in\mathcal B_\Om$ is of the form $q=2\varrho\operatorname{Re}(\bar\mu\,dz^2)$ for a unique
$\mu\in C^\infty(\overline\Om,\C)$, and $Z$ is tangent to $\partial D_i$ at $z$ if and only if
$\operatorname{Re}\bigl((\bar z-\bar a_i)Z(z)\bigr)=0$.

\begin{lemma}\label{lem:loc}
	Every class in $\cT_\Om$ has a representative with compact support in $\Om$. More precisely, for every
	$q\in\mathcal B_\Om$, there is a smooth vector field $Y$ on $\overline\Om$, tangent to $\partial\Om$, such that
	$q-(L_Yg_0)^\circ$ vanishes on a neighborhood of $\partial\Om$.
\end{lemma}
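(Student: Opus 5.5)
The plan is to work locally near each boundary circle, using the complex description from \eqref{eq:Lie}. Write $q=2\varrho\operatorname{Re}(\bar\mu\,dz^2)$. We seek a complex-valued $Z$, defined near $\partial D_i$, with two properties: $\partial_{\bar z}Z=\mu$ on a one-sided collar of $\partial D_i$ in $\overline\Om$, and $\operatorname{Re}\bigl((\bar z-\bar a_i)Z\bigr)=0$ on $\partial D_i$.

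Suppose we have such $Z_i$ for each $i$. The circles are pairwise disjoint, so we can choose cutoffs $\chi_i$ supported in disjoint collars, each equal to $1$ on a smaller collar. Set $Y=\sum_i\chi_iZ_i$. This $Y$ is smooth on $\overline\Om$ and tangent to $\partial\Om$. By \eqref{eq:Lie}, $q-(L_Yg_0)^\circ$ vanishes wherever all the $\chi_i$ equal $1$ or $0$, and in particular near $\partial\Om$. The outer cap $D_1$ is handled in the same way, since the tangency condition has the same form there.

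\emph{Straightening the boundary.} Near $\partial D_i$ we pass to the coordinate $w=s+\mathrm i\theta=\log\bigl((z-a_i)/\rho_i\bigr)$. This is a conformal diffeomorphism of a collar onto $\{0\le \pm s<\delta\}\times(\R/2\pi\Z)$, with the sign $+$ for $i\ge2$ and $-$ for $i=1$.

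Under a holomorphic change of coordinates, the pair $(\mu\,d\bar z\otimes\partial_z$, $Z\partial_z)$ transforms covariantly, and the equation $\partial_{\bar z}Z=\mu$ is coordinate independent in this sense. In the new coordinate the vector field is $Z/(z-a_i)$, and the tangency condition becomes $\operatorname{Re}W=0$ on $s=0$, where $W$ is the $w$-component. So the problem reduces to the following. Given smooth $\mu'$ on $[0,\delta)\times S^1$, find smooth $W$ with $\partial_{\bar w}W=\mu'$ near $s=0$ and $\operatorname{Re}W=0$ on $s=0$.

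\emph{Solving the boundary problem.} The first step is to remove the Taylor jet. Since $\partial_{\bar w}=\frac12(\partial_s+\mathrm i\partial_\theta)$, we can prescribe $W_0|_{s=0}=0$ and determine $\partial_s^{k+1}W_0|_{s=0}$ recursively from the $s$-derivatives of $\mu'$ and the $\theta$-derivatives of the lower-order coefficients. Borel's lemma then gives a smooth $W_0$ with $\operatorname{Re}W_0=0$ on $s=0$ and $\mu'':=\mu'-\partial_{\bar w}W_0$ flat at $s=0$.

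The second step is reflection. Let $w^*=-\bar w$, so that $s\mapsto -s$. Extend $\mu''$ by $\mu''(w^*)=-\overline{\mu''(w)}$; this is smooth because $\mu''$ is flat at $s=0$. Multiply by a cutoff in $s$, and solve $\partial_{\bar w}W_1=\mu''$ on a neighborhood of the doubled annulus. One way is to pull back to $\C$ via $e^w$ and use the Cauchy transform of a compactly supported smooth function; the result is smooth.

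Then symmetrize: $\tilde W_1(w)=\tfrac12\bigl(W_1(w)-\overline{W_1(w^*)}\bigr)$. The reflected function again solves the same equation because $\mu''$ is anti-symmetric under the reflection. By construction $\tilde W_1(w^*)=-\overline{\tilde W_1(w)}$, which forces $\operatorname{Re}\tilde W_1=0$ on $s=0$. Finally, take $W=W_0+\tilde W_1$.

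The main obstacle is this boundary regularity. Naive reflection of $\mu'$ produces a discontinuous extension and only a Hölder solution. The Borel-lemma jet correction is the step that makes the reflected data smooth, and I would check its recursion carefully. In particular, I would verify that $\operatorname{Re}W_0=0$ can be imposed simultaneously with arbitrary higher normal derivatives, which holds because only the boundary value is constrained.
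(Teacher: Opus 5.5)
Your strategy is sound: localize at each circle, straighten it by $w=\log((z-a_i)/\rho_i)$, remove the boundary jet with Borel's lemma, then reflect and symmetrize. However, the reflection rule has the wrong sign, and with that sign the symmetrization step fails.

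Put $w^*=-\bar w$ and $U(w)=-\overline{W_1(w^*)}$. A direct computation in $(s,\theta)$ gives $\partial_{\bar w}U(w)=\overline{(\partial_{\bar w}W_1)(w^*)}$. So $U$ solves the same equation as $W_1$ only if $\mu''(w^*)=\overline{\mu''(w)}$. This is the correct invariance condition for the Beltrami coefficient of a field with $W(w^*)=-\overline{W(w)}$; for instance, $W=s$ has this symmetry and $\partial_{\bar w}W=\tfrac12$.

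With your rule $\mu''(w^*)=-\overline{\mu''(w)}$, the function $U$ solves $\partial_{\bar w}U=-\mu''$ near $s=0$. Then $\tilde W_1=\tfrac12(W_1+U)$ is holomorphic there, and $W_0+\tilde W_1$ solves $\partial_{\bar w}W=\mu'-\mu''$, not $\mu'$.

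The repair is immediate. Extend by $\mu''(-s,\theta)=\overline{\mu''(s,\theta)}$, which is still smooth because $\mu''$ is flat at $s=0$. Then $\partial_{\bar w}\tilde W_1=\tfrac12\bigl(\chi(s)+\chi(-s)\bigr)\mu''$ for a real cutoff $\chi$, which equals $\mu''$ near $s=0$, and $\operatorname{Re}\tilde W_1=0$ on $s=0$ exactly as you say.

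A smaller slip: where all $\chi_i$ vanish, $q-(L_Yg_0)^\circ=q$, not $0$. You only need the points where some $\chi_i=1$, and these form a neighborhood of $\partial\Om$.

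With that fix, your proof is correct and takes a different route from the paper. The paper first solves $\partial_{\bar z}Z_0=\mu$ globally with one Cauchy transform, ignoring the boundary. It then corrects the normal component $\psi_i$ of $Z_0$ on each circle by a holomorphic field $h_i=\rho_i^{-1}(z-a_i)k_i$. Here $\operatorname{Re}k_i=\psi_i$ is a Dirichlet problem on the disc or its exterior, solved explicitly by Fourier series, so the cutoffs multiply only holomorphic fields.

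You avoid any boundary value problem for holomorphic functions by building tangency into the symmetry under the circle inversion, which is $w\mapsto-\bar w$ in your coordinate. The cost is the Borel jet correction, needed to make the reflected datum smooth. The paper's version is shorter and completely explicit. Yours is more local and uses the same reflection idea that drives the proof of Theorem~\ref{thm:surj}.
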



\begin{proof}
	Write $q=2\varrho\operatorname{Re}(\bar\mu\,dz^2)$. By \eqref{eq:Lie}, a vector field $Y$ satisfies $(L_Yg_0)^\circ=q$ on a
	region exactly when $\partial_{\bar z}Y=\mu$. In particular, holomorphic vector fields do not change the conformal
	structure. Moreover, at the point $a_i+\rho_ie^{\mathrm i\alpha}$ of $\partial D_i$, the radial unit vector is
	$e^{\mathrm i\alpha}$; as the Euclidean inner product of $u,v\in\C$ is $\operatorname{Re}(\bar uv)$, the normal component of
	$Y$ there is $\operatorname{Re}(e^{-\mathrm i\alpha}Y)$, and $Y$ is tangent to $\partial D_i$ exactly when it vanishes. 
	
	Thus,
	we have to solve $\partial_{\bar z}Y=\mu$ near $\partial\Om$ subject to $\operatorname{Re}(e^{-\mathrm i\alpha}Y)=0$ on each
	circle. The strategy is to solve the equation and then correct the boundary values by holomorphic fields near each circle.
	
	\emph{Step 1:} Extend $\mu$ to a smooth function with compact support in $\C$, and let
	\[
	Z_0(z)=\frac1\pi\int_\C\frac{\mu(w)}{z-w}\,dA(w)
	\]
	be its Cauchy transform. Since $1/(\pi z)$ is a fundamental solution of $\partial_{\bar z}$, the function $Z_0$ is smooth
	and satisfies $\partial_{\bar z}Z_0=\mu$ on $\C$ \cite[Thm.~1.2.2]{Hor90}; see also \cite[Ch.~V]{Ahl06}. In general, $Z_0$
	is not tangent to $\partial\Om$. Its normal component on $\partial D_i$,
	\[
	\psi_i(\alpha)=\operatorname{Re}\bigl(e^{-\mathrm i\alpha}Z_0(a_i+\rho_ie^{\mathrm i\alpha})\bigr)
	=\sum_{n\in\mathbb Z}\hat\psi_{i,n}e^{\mathrm in\alpha},
	\]
	is a smooth real function; in particular, $\hat\psi_{i,-n}=\overline{\hat\psi_{i,n}}$.
	
	\emph{Step 2:} For each $i$, we look for a function $h_i$, holomorphic on the side of
	$\partial D_i$ on which $\Om$ lies, with the same normal component as $Z_0$, that is, with
	$\operatorname{Re}(e^{-\mathrm i\alpha}h_i)=\psi_i$ on $\partial D_i$. We write $h_i=\rho_i^{-1}(z-a_i)\,k_i$; as the factor
	$\rho_i^{-1}(z-a_i)$ equals $e^{\mathrm i\alpha}$ on $\partial D_i$, the condition becomes $\operatorname{Re}k_i=\psi_i$ on
	$\partial D_i$, a Dirichlet problem for the real part of a holomorphic function. 
	
	As $\infty\in D_1$, the domain $\Om$ lies
	inside $\partial D_1$ and outside $\partial D_i$ for $i\ge2$, and we set
	\[
	k_1=\hat\psi_{1,0}+2\sum_{n\ge1}\hat\psi_{1,n}\Bigl(\frac{z-a_1}{\rho_1}\Bigr)^{n},\qquad
	k_i=\hat\psi_{i,0}+2\sum_{n\ge1}\hat\psi_{i,-n}\Bigl(\frac{\rho_i}{z-a_i}\Bigr)^{n}\quad(i\ge2).
	\]
	At $z=a_i+\rho_ie^{\mathrm i\alpha}$, the powers in these series are $e^{\mathrm in\alpha}$ and $e^{-\mathrm in\alpha}$,
	respectively; so, using $\hat\psi_{i,-n}=\overline{\hat\psi_{i,n}}$, we obtain $\operatorname{Re}k_i=\psi_i$. Since $\psi_i$
	is smooth, its Fourier coefficients decay faster than any power of $n$. Hence, the series converge on $\overline\Om$
	together with all their derivatives, and $h_i$ is smooth on $\overline\Om$ and holomorphic in $\Om$. Consequently,
	$Z_0-h_i$ is tangent to $\partial D_i$ and satisfies $\partial_{\bar z}(Z_0-h_i)=\mu$.
	
	\emph{Step 3:} Choose $\chi_i\in C^\infty(\overline\Om)$ equal to $1$ near $\partial D_i$ and to $0$ near
	$\partial D_j$ for $j\ne i$, and let $Y=Z_0-\sum_i\chi_ih_i$. Near $\partial D_i$, we have $Y=Z_0-h_i$; so $Y$ is tangent to
	$\partial\Om$, and $\partial_{\bar z}Y=\mu$ near $\partial\Om$. Elsewhere,
	$\partial_{\bar z}Y=\mu-\sum_i(\partial_{\bar z}\chi_i)h_i$ differs from $\mu$ only on the support of the differentials
	$d\chi_i$, which is a compact subset of $\Om$. By \eqref{eq:Lie},
	\[
	q-(L_Yg_0)^\circ=2\varrho\operatorname{Re}\bigl(\overline{(\mu-\partial_{\bar z}Y)}\,dz^2\bigr)
	\]
	vanishes near $\partial\Om$. As $Y$ is tangent to $\partial\Om$, we have $(L_Yg_0)^\circ\in\mathcal L_\Om$, so this tensor
	represents the class $[q]$.
\end{proof}


\begin{theorem}\label{thm:surj}
	Let $\Om\subset\Sph$ be a circle domain with $b\ge1$ caps, and let $\omega\in\cC$ and $\Psi:T_\omega\cC\to\cT_\Om$ be defined as
	above. Then $\Psi$ is surjective. 
\end{theorem}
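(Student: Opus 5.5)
Fix a class $[q]\in\cT_\Om$. By Lemma~\ref{lem:loc} we may take $q=2\varrho\operatorname{Re}(\bar\mu\,dz^2)$ with $\mu$ compactly supported in $\Om$. The aim is to construct a smooth field $Z$ on $\overline\Om$ with $\partial_{\bar z}Z=\mu$ on all of $\overline\Om$ whose normal component on each $\partial D_i$ is the restriction of some $\theta_{\dot\omega}$. Then \eqref{eq:Lie} gives $(L_Zg_0)^\circ=q$, so $[q]=\Psi(\dot\omega)$.

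\emph{Step 1: solve $\partial_{\bar z}$ globally.} Let $Z_0$ be the Cauchy transform of $\mu$ extended by zero. It is smooth on $\C$, satisfies $\partial_{\bar z}Z_0=\mu$, and is holomorphic outside $\operatorname{supp}\mu$, in particular near every circle and inside each $D_i$, $i\ge2$. At infinity $Z_0=O(1/z)$, so it extends holomorphically over $D_1$ as a field on $\hat\C$. Any further correction by fields holomorphic on $\overline\Om$ leaves $(L_Zg_0)^\circ$ unchanged. So the problem reduces to showing that the normal components $\psi_i$ of such fields span, modulo the space $\{\theta_{\dot\omega}\}$ of normal data coming from cap motions, everything that a holomorphic field on a neighborhood of $\overline\Om$ can produce.

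\emph{Step 2: Fourier analysis on each circle.} The key identification is that $\theta_{\dot\omega}|_{\partial D_i}$ contains exactly the Fourier modes $n=-1,0,1$ of the normal component, since $\dot r_i$ gives the constant mode and $\dot c_i\in T_{c_i}\Sph$ gives the first harmonics. In Euclidean terms, via stereographic projection, these are the normal components of the fields $\alpha+\beta(z-a_i)$ restricted to $\partial D_i$, which correspond to translating and scaling circles. The modes $|n|\ge2$ of $\psi_i$ can be removed by a field holomorphic on the $\Om$-side of $\partial D_i$, as in Step 2 of Lemma~\ref{lem:loc}. The issue is that the correction $h_i$ there is holomorphic only near $\partial D_i$, not on all of $\Om$, so the corrections have to be made global.

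\emph{Step 3: make the corrections global.} I would set $Y=Z_0-\sum_i H_i$, where each $H_i$ is holomorphic on $\hat\C\setminus D_i$ (hence on a neighborhood of $\overline\Om$) and is given by the series for $k_i$ with $(z-a_i)$, restricted to modes $|n|\ge2$ after expansion, so that $H_i$ vanishes to order $\ge3$ at the relevant point ($\infty$ for $i\ge2$; for $i=1$ use the expansion about $a_1$, which converges on $|z-a_1|<\rho_1\supset\overline\Om$). Then $\partial_{\bar z}Y=\mu$ on $\overline\Om$. The main obstacle is that $H_j$ contributes new modes on the other circles $\partial D_i$, $i\ne j$, so the conditions are coupled. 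I would handle this by an iteration, or equivalently an operator equation $(I-K)\Psi=\Psi_0$ on $\bigoplus_i C^\infty(\partial D_i)$ modulo the low modes. Here $K$ maps the high modes on one circle, through holomorphic extension, to data on the disjoint other circles. This $K$ is smoothing, since the circles are at positive distance and the extensions are analytic, hence compact, so Fredholm theory applies.

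To finish I need either injectivity of $I-K$ modulo the low modes, or a cleaner argument. The cleaner route I would actually try is duality: an element of $\cT_\Om^*$ annihilating $\mathcal L_\Om$ is a holomorphic quadratic differential $\phi$ on $\Om$, real on $\partial\Om$. By reflection it corresponds to one on the Schottky double, and the space of these is finite-dimensional (of dimension $3b-6$ for $b\ge3$ by Riemann--Roch). If $\phi$ annihilates $\Psi(T_\omega\cC)$, then its pairing with $\theta_{\dot\omega}$, which is $\int_{\partial D_i}\phi(\mathbf n,\mathbf n)\theta_{\dot\omega}$, vanishes for all $\dot\omega$. This means the modes $n=0,\pm1$ of $\phi(\mathbf n,\mathbf n)$ vanish on each circle. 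Writing $\phi=f\,dz^2$ with $(z-a_i)^2f$ real on $\partial D_i$, the vanishing of these modes forces $f$ to extend holomorphically across each cap with enough vanishing at the center that $\phi$ is a holomorphic quadratic differential on $\hat\C$. Such a $\phi$ is zero. Hence the image is dense, and being closed (finite codimension), $\Psi$ is onto. The verification of this extension across the caps is the step I expect to require the most care.
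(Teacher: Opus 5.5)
Your reduction is correct as far as it goes. After Lemma~\ref{lem:loc} and the Cauchy transform, surjectivity amounts to finding a field holomorphic near $\overline\Om$ whose normal components on the circles cancel those of $Z_0$ modulo the Fourier modes $0,\pm1$, and you identify $\{\theta_{\dot\omega}\}$ with these modes correctly. But both of your routes stop exactly at the step that carries the content, so there is a genuine gap.

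\emph{The Fredholm route.} Using the vanishing orders of the $H_j$ at $\infty$ and at $a_1$, the kernel of $I-K$ corresponds to fields holomorphic on $\overline\Om$ whose normal component on every $\partial D_i$ agrees with that of some M\"obius field, modulo global M\"obius fields. So injectivity of $I-K$ is the infinitesimal rigidity of circle domains, an infinitesimal form of the uniqueness in Koebe's theorem, and you do not prove it.

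\emph{The duality route.} Here the decisive step is false as stated. Near $\partial D_i$ ($i\ge2$) write $f=\sum_nc_n(z-a_i)^n$. Reality of $(z-a_i)^2f$ on the circle gives $c_{-m-2}=\overline{c_{m-2}}\,\rho_i^{2m}$, and the vanishing of the modes $0,\pm1$ only adds $c_{-1}=c_{-2}=c_{-3}=0$. Hence the principal part $\sum_{n\le-4}c_n(z-a_i)^n$ is the reflection of the regular part. So $f$ extends holomorphically across $D_i$ if and only if $f\equiv0$ near $\partial D_i$: your extension claim is equivalent to the conclusion that the quadratic differential vanishes, and it cannot be verified cap by cap. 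Schwarz reflection only continues it to the complement of the limit set of the group generated by the inversions $\mathcal I_i$. Every element of the $(3b-6)$-dimensional space (for $b\ge3$) of holomorphic quadratic differentials on $\Om$ real on $\partial\Om$ admits such a continuation, so the three conditions per circle must be used globally.

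Even granting the vanishing, passing from ``no annihilator'' to ``onto'' needs more. You need $\mathcal L_\Om$ to be closed of finite codimension, with annihilator consisting of smooth holomorphic quadratic differentials real on $\partial\Om$. That is an elliptic Hodge decomposition for $Y\mapsto(L_Yg_0)^\circ$ with tangential boundary condition, which you do not supply. A priori $\cT_\Om$ need not be Hausdorff, so ``dense'' does not yield surjectivity.

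\emph{How the paper avoids this.} It never solves the boundary problem or invokes duality. It extends $t\mu$ to a Beltrami coefficient $t\tilde\mu$ on all of $\hat\C$ that is invariant under the reflection group $\cK$. This is well defined because the translates $\gamma(\Om)$ are pairwise disjoint and $\mu$ vanishes near $\partial\Om$. It then solves the Beltrami equation by the measurable Riemann mapping theorem with parameters, giving quasiconformal maps $\phi_t$. The $\cK$-invariance shows that each $\phi_t\mathcal I_i\phi_t^{-1}$ is anti-M\"obius, so $\phi_t$ maps every circle onto a circle. Differentiating at $t=0$ yields $Z=\dot\phi$ with $\partial_{\bar z}Z=\mu$ and normal speed $\theta_{\dot\omega}$ on $\partial\Om$ simultaneously. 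The global information your extension step lacks comes from solving the Beltrami equation on the whole sphere, limit set included.
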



\begin{proof}
	By Lemma~\ref{lem:loc}, it suffices to show that $[q]\in\Psi(T_\omega\cC)$ whenever $q=2\varrho\operatorname{Re}(\bar\mu\,dz^2)$
	with $\mu$ smooth and compactly supported in $\Om$. By the definition of $\Psi$ and by \eqref{eq:Lie}, it suffices to find
	$\dot\omega\in T_\omega\cC$ and a smooth vector field $Z$ on $\overline\Om$ such that
	\begin{equation}\label{eq:surjgoal}
		\text{(a)}\ \ \partial_{\bar z}Z=\mu\ \text{ in }\Om,\qquad\text{(b)}\ \ \langle Z,\mathbf n\rangle=\theta_{\dot\omega}
		\ \text{ on }\partial\Om .
	\end{equation}
	
	The idea is as follows. For small $t$, the conformal structure on $\Om$ with
	Beltrami coefficient $t\mu$ is, by Koebe's theorem, conformally equivalent to a circle domain $\Om_t$. If the uniformizing
	maps $\phi_t$ depended differentiably on $t$, with $\phi_0=\mathrm{id}$, then $Z=\partial_t\phi_t|_{t=0}$ would satisfy (a),
	by differentiating the Beltrami equation, and (b), since $\phi_t$ maps the circles of $\Om$ to those of $\Om_t$. Koebe's
	theorem does not provide this differentiability, so we construct the maps $\phi_t$ explicitly, using circle inversions and the measurable Riemann mapping theorem.
	
	\emph{Step 1:} Let $\mathcal I_i$ be the inversion in the circle $\partial D_i$; that is, the
	anticonformal involution of $\hat\C$ fixing $\partial D_i$ pointwise, and let $\cK$ be the group generated by
	$\mathcal I_1,\dots,\mathcal I_b$. We claim that the sets $\gamma(\Om)$, $\gamma\in\cK$, are pairwise disjoint.
	
	Indeed,
	$\mathcal I_i$ maps $\hat\C\setminus D_i$, which contains $\Om$ and the caps $D_j$ with $j\ne i$, into the interior of $D_i$.
	Hence, for every word $\gamma=\mathcal I_{i_1}\cdots\mathcal I_{i_k}$ with $k\ge1$ and $i_m\ne i_{m+1}$, applying the
	letters from right to left, we see that $\gamma(\Om)\subset\operatorname{int}D_{i_1}$. Every element of $\cK$ is represented
	by such a word or by the empty word; so $\gamma(\Om)\cap\Om=\emptyset$ for $\gamma\ne1$. For $\gamma, \gamma'\in \cK$, applying the argument above to
	$\gamma'^{-1}\gamma$ proves the claim. Moreover, $\mathcal I_i(\Om)=\operatorname{int}D_i\setminus\bigcup_{j\ne i}\mathcal I_i(D_j)$ and each $I_i(D_j)$ is a compact subset of $\operatorname{int}D_i$. Hence, a neighborhood of $\partial D_i$ is
	contained in $\Om\cup\partial D_i\cup\mathcal I_i(\Om)$ and meets no other copy.
	
	
	\emph{Step 2:} For real $t$ with $|t|\sup|\mu|<1$, we define a conformal structure $\mathfrak c_t$ on $\hat\C$ in three pieces. On $\Om$,
	it is the conformal structure of the metric $|dz+t\mu\,d\bar z|^2$, whose Beltrami coefficient is $t\mu$; the condition on
	$t$ ensures that this metric is positive definite. On each copy $\gamma(\Om)$ with $\gamma\in\cK\setminus\{1\}$, it is the
	image of this structure under $\gamma$; that is, $\gamma$ is conformal from $(\Om,\mathfrak c_t)$ onto
	$(\gamma(\Om),\mathfrak c_t)$. On the rest of $\hat\C$, it is the standard structure. 
	
	By Step~1, $\mathfrak c_t$ is well
	defined; it is only measurable, since it has infinitely many pieces accumulating at the limit set. It is $\cK$-invariant, since each $\mathcal I_i$ permutes the
	copies. The Beltrami
	coefficient of the image of a conformal structure under a conformal or anticonformal map depends real-linearly on the
	original one and has the same modulus. Hence, the Beltrami coefficient of $\mathfrak c_t$ is $t\tilde\mu$ for a bounded
	measurable function $\tilde\mu$ independent of $t$, with $\tilde\mu=\mu$ on $\Om$ and $|\tilde\mu|\le\sup|\mu|$. Since $\mu$
	\emph{vanishes} near $\partial\Om$ and, by Step~1, only $\Om$ and $\mathcal I_i(\Om)$ meet a neighborhood of $\partial D_i$, the
	function $\tilde\mu$ vanishes on a neighborhood of $\partial\Om$, and it is smooth on a neighborhood $\Om'$ of $\overline\Om$.
	
	\emph{Step 3:} By the measurable Riemann mapping theorem and its dependence on parameters
	\cite{AB60} (see also \cite[Ch.~V]{Ahl06}), there are quasiconformal homeomorphisms $\phi_t$ of $\hat\C$ fixing $0$, $1$,
	and $\infty$, with Beltrami coefficient $t\tilde\mu$; that is,
	\[
	  \partial_{\bar z}\phi_t=t\tilde\mu\,\partial_z\phi_t,
	\]
	so that
	$\phi_t$ maps $\mathfrak c_t$ to the standard conformal structure. Moreover, $\phi_0=\mathrm{id}$, $t\mapsto\phi_t(z)$ is
	differentiable at $t=0$ for every $z$, and $\dot\phi=\partial_t\phi_t|_{t=0}$ is continuous with $\partial_{\bar z}\dot\phi=\tilde\mu$
	in the sense of distributions.
	
	\emph{Step 4:} Let $J=\phi_t\mathcal I_i\phi_t^{-1}$. As $\mathcal I_i$ preserves $\mathfrak c_t$ and
	$\phi_t$ maps $\mathfrak c_t$ to the standard structure, $J$ is an anticonformal homeomorphism of $\hat\C$. Its composition
	with complex conjugation is a $1$-quasiconformal homeomorphism of $\hat\C$ and, hence, a M\"obius transformation
	\cite{Ahl06}; so $J$ is an anti-M\"obius transformation. It fixes the Jordan curve $\phi_t(\partial D_i)$ pointwise. After
	conjugating three of its fixed points to $0$, $1$, and $\infty$, it becomes $z\mapsto\bar z$, since a M\"obius
	transformation fixing $0$, $1$, and $\infty$ is the identity. So the fixed points of $J$ form a circle, which must contain,
	and hence be equal to, the Jordan curve $\phi_t(\partial D_i)$. 
	
	Consequently, $\phi_t$ maps the caps $D_i$ onto closed
	round discs and $\Om$ onto a circle domain; this is an explicit uniformization of $(\Om,\mathfrak c_t)$ by a circle domain.
	The discs $\phi_t(D_i)$ define a point $\omega(t)\in\cC$ with $\omega(0)=\omega$, and $\omega(t)$ is differentiable at
	$t=0$, because the center and the radius of a circle depend smoothly on three of its points. Let $\dot\omega=\omega'(0)$.
	
	\emph{Step 5:} For $p\in\partial\Om$, the curve $p(t)=\phi_t(p)$ lies on the boundary circles of
	$\omega(t)$; so \eqref{eq:normal} gives $\langle\dot\phi,\mathbf n\rangle=\theta_{\dot\omega}$ on $\partial\Om$. Next, let
	$\chi\in C^\infty_c(\Om')$ be equal to $1$ near $\overline\Om$. On $\Om'$, the function $\dot\phi$ differs from the Cauchy
	transform of $\chi\tilde\mu$ by a continuous function $h$ with $\partial_{\bar z}h=0$ near $\overline\Om$ in the sense
	of distributions. Such a function is holomorphic, since its mollifications are holomorphic and converge to it locally
	uniformly. As the Cauchy transform of $\chi\tilde\mu$ is smooth, so is
	$\dot\phi$ on $\overline\Om$, and $\partial_{\bar z}\dot\phi=\tilde\mu=\mu$ in $\Om$. Thus, $Z=\dot\phi$ satisfies
	\eqref{eq:surjgoal}, and, by \eqref{eq:Lie},
	\[
	\Psi(\dot\omega)=[(L_{\dot\phi}g_0)^\circ]=[2\varrho\operatorname{Re}(\bar\mu\,dz^2)]=[q].
	\]
	
	In summary, $\Psi(\dot\omega)$ is the derivative of the uniformization of the deformed conformal structures by circle
	domains.
\end{proof}
\begin{remark}
	The proof follows a classical pattern: the quasiconformal
	deformation technique for groups generated by reflections, as in the deformation theory of Kleinian groups
	\cite{AB60}.
\end{remark}

We are now ready to bound the Teichm\"uller directions of each type.

\begin{proposition}\label{prop:teich}
For every irreducible representation $\pi$ of $D_b$,
\[
  m_{\pi^+}(\cT_\Sig)\le\dim\pi-[\pi=A_2]-[\pi=E_1]\qquad\text{and}\qquad m_{A_1^-}(\cT_\Sig)=m_{E_1^-}(\cT_\Sig)=0 .
\]
\end{proposition}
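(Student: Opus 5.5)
By Theorem~\ref{thm:surj} and Lemma~\ref{lem:psi}, the map $\Psi$ is a $G$-equivariant surjection $T_\omega\cC\to\cT_\Om\cong\cT_\Sig$. It vanishes on the image of the injective $G$-equivariant map $\iota:\mob\to T_\omega\cC$. Hence $\cT_\Sig$ is a $G$-equivariant quotient of $T_\omega\cC/\iota(\mob)$. Since every representation of the finite group $G$ splits into irreducibles, this gives, for every irreducible $\rho$ of $G$,
\[
  m_\rho(\cT_\Sig)\le m_\rho(T_\omega\cC)-m_\rho(\mob).
\]
The plan is therefore to decompose $T_\omega\cC$ and $\mob$ as representations of $G=\Dbh$, and then subtract.

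\emph{Decomposing $T_\omega\cC$.} By Lemma~\ref{lem:model}, the center $c_i$ lies on the equator and in $P_i$. The stabilizer of the $i$-th cap is $\{1,s_i,\sigma_h,s_i\sigma_h\}$, and $G$ permutes the caps as it permutes the $\Gamma_i$. I split each summand $T_{c_i}\Sph\oplus\R$ into three lines, each preserved by the stabilizer:
\begin{itemize}[leftmargin=*,itemsep=2pt]
\item the radius direction $\dot r_i$, on which the stabilizer acts trivially;
\item the longitude direction, tangent to the equator and orthogonal to $P_i$, on which $s_i$ acts by $-1$ and $\sigma_h$ by $+1$;
\item the latitude direction $\pm e_3$, on which $s_i$ acts by $+1$ and $\sigma_h$ by $-1$.
\end{itemize}
The $b$ lines of each kind form an induced representation from the stabilizer. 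I will verify the identification by computing characters: an element $\gamma$ contributes only through the caps it fixes, with the sign by which it acts on that line. This yields
\[
  T_\omega\cC\cong\cP_b^+\oplus(A_2\otimes\cP_b)^+\oplus\cP_b^- .
\]
Moreover, $\cP_b\oplus(A_2\otimes\cP_b)$ is induced from the regular representation of $\{1,s_i\}$, hence equals $\cR_b$; this can also be seen from the characters, which are $2b$ at the identity and $0$ elsewhere. So the even part of $T_\omega\cC$ is $\cR_b^+$ and the odd part is $\cP_b^-$. By \eqref{eq:Pb}, the odd part contains $A_1^-$ and $E_1^-$ exactly once each.

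\emph{Decomposing $\mob$.} The conformal fields of $\Sph$ split into rotations and boosts. The rotations are $a\times p$, which by \S\ref{ssec:reps} form $\R^3\otimes\det\cong E_1^-\oplus A_2^+$. The boosts are the fields $W_a|_{\Sph}$, the gradients of the linear functions $X_a$. They transform by $\gamma_*W_a=W_{\gamma a}$, so they form the defining representation $\R^3\cong E_1^+\oplus A_1^-$. Hence
\[
  \mob\cong A_2^+\oplus E_1^+\oplus A_1^-\oplus E_1^- .
\]

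\emph{Subtracting.} In the even sector,
\[
  m_{\pi^+}(\cT_\Sig)\le\dim\pi-[\pi=A_2]-[\pi=E_1].
\]
In the odd sector, $m_{A_1^-}\le1-1=0$ and $m_{E_1^-}\le1-1=0$, which are the claimed vanishing statements.

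\emph{Main obstacle.} The step I expect to need the most care is the sign bookkeeping. It consists of the action of $\gamma\in G$ on $T_\omega\cC$ (the tangent vector $\dot c_i$ is pushed forward by $\gamma$), and of the fact that rotation fields pick up the factor $\det\gamma$ while boosts do not. I will settle all of these by evaluating the characters on the conjugacy classes of rotations, vertical reflections, $\sigma_h$, and their products, rather than by arguing abstractly.
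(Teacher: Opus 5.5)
Your proposal is correct and is essentially the paper's proof. It uses the same bound $m_\rho(\cT_\Sig)\le m_\rho(T_\omega\cC)-m_\rho(\mob)$, obtained from the surjectivity of $\Psi$ together with $\Psi\circ\iota=0$ and the injectivity of $\iota$, then the same decompositions $T_\omega\cC\cong\cR_b^+\oplus\cP_b^-$ and $\mob\cong A_2^+\oplus E_1^+\oplus A_1^-\oplus E_1^-$, and the same subtraction. The only cosmetic difference is in the $\sigma_h$-even part: you split it into radius and longitude lines, $\cP_b^+\oplus(A_2\otimes\cP_b)^+$, before recognizing it as $\cR_b^+$, which is valid since with $v_i=e_3\times c_i$ one has $\gamma v_i=\det(\gamma)\,v_j$ for $\gamma\in D_b$. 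The paper instead computes the character of the even part directly.
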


\begin{proof}
By Lemma~\ref{lem:psi} and Theorem~\ref{thm:surj}, $\cT_\Sig\cong\cT_\Om\cong T_\omega\cC/\ker\Psi$ as $G$-modules, and $\ker\Psi$
contains a copy of $\mob$. Hence, for every irreducible representation $\pi$ of $G$,
\[
  m_\pi(\cT_\Sig)\le m_\pi(T_\omega\cC)-m_\pi(\mob).
\]

We first compute $T_\omega\cC$. By Lemma~\ref{lem:model}, $T_{c_i}\Sph=\R e_3\oplus\R v_i$, where $v_i$ is a unit normal of
$P_i$. The group $G$ permutes the summands $T_{c_i}\Sph\oplus\R$ of $T_\omega\cC$ as it permutes the boundary components,
and $\gamma$ maps $(\dot c,\dot r)$ at $c_i$ to $(\gamma\dot c,\dot r)$. Thus, $\sigma_h$ acts by $-1$ on the vectors $e_3$
and by $+1$ on the vectors $v_i$ and on the radii. 

The $\sigma_h$-odd part $\bigoplus_i\R e_3$ is permuted by $D_b$ like
the boundary components; hence, it is isomorphic to $\cP_b$. The trace of $\gamma\in D_b$ on the $\sigma_h$-even part
$\bigoplus_i(\R v_i\oplus\R)$ is the sum of its traces on the summands with $\gamma(\Gamma_i)=\Gamma_i$. It is $2b$ for
$\gamma=1$, and it is $0$ for a non-trivial rotation, which fixes no boundary component. It is also $0$ for a reflection
$s$: every summand with $s(\Gamma_i)=\Gamma_i$ has $s=s_i$, and $s_i$ acts by $-1$ on $v_i$ and by $+1$ on the radius.
Therefore, the $\sigma_h$-even part has the character of $\cR_b$, and
\[
  T_\omega\cC\cong\cR_b^+\oplus\cP_b^- .
\]

Next, the conformal vector fields of $\Sph$ are the fields $\xi_{a,c}(p)=a\times p+c-\langle c,p\rangle p$ with
$a,c\in\R^3$, and $\gamma_*\xi_{a,c}=\xi_{\det(\gamma)\gamma a,\gamma c}$ for $\gamma\in O(3)$. So
$\mob\cong(\R^3\otimes\det)\oplus\R^3$. Since $\R^3\cong E_1^+\oplus A_1^-$, $\det$ restricts to $A_2$ on $D_b$ and to $-1$
on $\sigma_h$, and $E_1\otimes A_2\cong E_1$, we obtain
\[
  \mob\cong E_1^+\oplus A_1^-\oplus E_1^-\oplus A_2^+ .
\]

Combining the above, we conclude that
\[
  m_{\pi^+}(\cT_\Sig)\le m_\pi(\cR_b)-[\pi=E_1]-[\pi=A_2]=\dim\pi-[\pi=A_2]-[\pi=E_1],
\]
and, by \eqref{eq:Pb}, $m_{\pi^-}(\cT_\Sig)\le m_\pi(\cP_b)-[\pi=A_1]-[\pi=E_1]=0$ for $\pi\in\{A_1,E_1\}$.
\end{proof}

\begin{remark}\label{rem:caps}
By Lemma~\ref{lem:model}, $\Sig$ is conformally the sphere with $b$ equal round caps removed whose centers form a
regular $b$-gon on the equator (Figure~\ref{fig:caps}); such a domain is determined by the common radius. The proof of
Proposition~\ref{prop:teich} counts the motions of the caps modulo the M\"obius motions. Changing the radius or the
longitude of a cap is $\sigma_h$-even, while changing its latitude is $\sigma_h$-odd. The M\"obius motions are the
rotations about $\ell$ (type $A_2^+$), the rotations about horizontal axes, which tilt the equator ($E_1^-$), the vertical
boost, which changes the latitude of all caps together ($A_1^-$), and the horizontal boosts, which shrink the caps on one
side and enlarge those on the other ($E_1^+$). This leaves at most $(2b-3)+(b-3)=3b-6$ directions, the dimension of the
Teichm\"uller space of a sphere with $b$ holes.
\end{remark}

\subsection{The odd sector}\label{ssec:odd}
In this subsection, we analyze the $\sigma_h$-odd functions by restricting them to the upper half. Let $U$ and $\Lambda$
be as in Lemma~\ref{lem:star}, let $H^1_\Lambda(U)$ be the space of functions $u\in H^1(U)$ vanishing on $\Lambda$, and let
\[
  S_U(u,u)=\int_U\bigl(|\nabla u|^2-|A|^2u^2\bigr)\dA-\int_{\bdSig\cap\overline U}u^2\ds .
\]
Restriction to $U$ is an isomorphism from the space of $\sigma_h$-odd functions in $H^1(\Sig)$ onto $H^1_\Lambda(U)$, whose
inverse is the odd extension. Moreover, $S(f,f)=2S_U(f|_U,f|_U)$ and $\int_\Sig f^2=2\int_Uf^2$ for odd $f$. Let
$\lambda_1\le\lambda_2\le\cdots$ be the eigenvalues of $S_U$ on $H^1_\Lambda(U)$ relative to $L^2(U)$. The group $D_b$
preserves $U$, $\Lambda$, and $S_U$.

\begin{lemma}\label{lem:SU}
The following statements hold:
\begin{enumerate}[label=\textup{(\roman*)},itemsep=2pt]
\item the odd extension of an eigenfunction of $S_U$ with eigenvalue $\lambda$ is an eigenfunction of $S$ with
      eigenvalue $\lambda$, and every $\sigma_h$-odd eigenfunction of $S$ arises in this way; in particular, the
      eigenfunctions of $S_U$ are smooth on $\overline U$, and the odd extension maps a $D_b$-invariant space of
      eigenfunctions of $S_U$ of type $\pi$ to one of type $\pi^-$;
\item the eigenvalue $\lambda_1$ is simple, with a $D_b$-invariant eigenfunction $\varphi_1>0$ in $U$;
\item every eigenfunction $f$ of $\lambda_2$ has exactly two nodal domains, that is, components of
      $\{x\in\overline U:f(x)\ne0\}$, and satisfies $f(N)\ne0$ or $df_N\ne0$.
\end{enumerate}
\end{lemma}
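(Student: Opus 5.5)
We prove the three parts in order; the only real work is the topological count in (iii).

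\emph{Part (i).} Let $\varphi$ be an eigenfunction of $S_U$ on $H^1_\Lambda(U)$ with eigenvalue $\lambda$, and let $f$ be its odd extension. Every $h\in H^1(\Sig)$ splits as $h=h_++h_-$ into $\sigma_h$-even and $\sigma_h$-odd parts. For the even part, $S(f,h_+)=0$ and $\int_\Sig fh_+=0$ by oddness, since $\sigma_h$ preserves $S$. For the odd part, $S(f,h_-)=2S_U(\varphi,h_-|_U)=2\lambda\int_U\varphi h_-=\lambda\int_\Sig fh_-$. Hence $f$ is a weak eigenfunction of $S$ with eigenvalue $\lambda$.

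Conversely, if $f$ is a $\sigma_h$-odd eigenfunction of $S$, then $f|_U\in H^1_\Lambda(U)$, and the same identities show that it is an eigenfunction of $S_U$.

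Smoothness of eigenfunctions of $S_U$ on $\overline U$ follows from smoothness of eigenfunctions of $S$ on $\Sig$, recorded in the conventions. The statement about types holds because the odd extension commutes with the action of $D_b$, which commutes with $\sigma_h$, and it produces $\sigma_h$-odd functions.

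\emph{Part (ii).} This is the standard ground-state argument. If $\varphi$ minimizes the Rayleigh quotient, so does $|\varphi|$, since $|\varphi|\in H^1_\Lambda(U)$ with the same energy. Then $|\varphi|$ is a smooth nonnegative solution of $(\Delta+|A|^2+\lambda_1)|\varphi|=0$ in $U$. By the strong maximum principle (or by the local nodal structure in \S\ref{ssec:types}, which forbids a sign-preserving interior zero), $|\varphi|>0$ in $U$, since $U$ is connected (Lemma~\ref{lem:star}(ii)).

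Simplicity follows: two orthogonal positive eigenfunctions cannot exist, and any eigenfunction has constant sign, so a suitable combination of two eigenfunctions vanishing at a point would contradict positivity. Finally, for $\gamma\in D_b$, the function $\gamma\cdot\varphi_1$ is again a positive eigenfunction for $\lambda_1$, so it equals $\varphi_1$ by simplicity and the $L^2$-isometry.

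\emph{Part (iii).} Let $f$ be an eigenfunction for $\lambda_2$, and write $\{f\ne0\}$ for the set where $f$ does not vanish in $\overline U$.

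\begin{itemize}
\item \textbf{At least two nodal domains.} The function $f$ is $L^2$-orthogonal to $\varphi_1>0$, so $f$ changes sign. Hence $\{f\ne0\}$ has at least two components.
\item \textbf{At most two nodal domains.} This is Courant's nodal domain theorem for the mixed problem: Dirichlet on $\Lambda$, Robin condition $\partial_\eta u=u$ on $\bdSig\cap\overline U$. Suppose there are at least three nodal domains $\Om_1,\Om_2,\Om_3$. Set $u_i=f\mathbf 1_{\Om_i}$; these lie in $H^1_\Lambda(U)$ by the truncation argument of Lemma~\ref{lem:pieces}. Each satisfies $S_U(u_i,u_i)=\lambda_2\int u_i^2$, and $S_U(u_i,u_k)=0$ for $i\ne k$. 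Choose a nonzero combination of $u_1,u_2$ orthogonal to $\varphi_1$. It has Rayleigh quotient $\lambda_2$, so by the min--max principle it is an eigenfunction. But it vanishes on the open set $\Om_3$, contradicting unique continuation \cite{Aro57}.
\end{itemize}

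The nodal domains are taken in $\overline U$, which includes boundary points on $\bdSig$. These boundary points do not affect connectivity in $U$, because near a boundary point where $f\ne0$ the set $\{f\ne0\}$ is a half-disc whose interior lies in $U$. So $U_+$ and $U_-$ are each connected, and Lemma~\ref{lem:jordan}, applied with $c=|A|^2+\lambda_2$, gives $f(N)\ne0$ or $df_N\ne0$.

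The main obstacle is the mixed-boundary Courant argument, in particular checking that the truncated functions genuinely belong to $H^1_\Lambda(U)$ and that the relative boundary pieces on $\bdSig$ cause no trouble. Both points are handled exactly as in Lemma~\ref{lem:pieces}.
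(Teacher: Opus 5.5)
Your proposal is correct and follows essentially the same route as the paper. In (i) you use the odd extension together with the even/odd splitting of test functions; in (ii) the ground-state argument with $|\varphi|$ and the strong maximum principle; and in (iii) the Courant-type truncation with unique continuation, plus the half-disc observation that $U_\pm$ stay connected, which lets you invoke Lemma~\ref{lem:jordan} with $c=|A|^2+\lambda_2$.
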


\begin{proof}
(i) Let $u$ be an eigenfunction of $S_U$ with eigenvalue $\lambda$, and let $f$ be its odd extension. Then
$S(f,h)=\lambda\int_\Sig fh$ for all odd $h$, and trivially for all even $h$, since both sides vanish by symmetry. Thus,
$f$ is an eigenfunction of $S$ and, consequently, smooth up to $\bdSig$. The converse and the statement on types are
clear.

(ii) Let $u$ be an eigenfunction of $\lambda_1$. Then $u$ minimizes $S_U(v,v)/\int_Uv^2$ on
$H^1_\Lambda(U)\setminus\{0\}$, and so does $|u|$; thus, $|u|$ is also an eigenfunction of $\lambda_1$. By the strong
maximum principle, $|u|>0$ in $U$; as $U$ is connected, $u$ has a sign in $U$. Since two functions with a sign in $U$
cannot be $L^2$-orthogonal, $\lambda_1$ is simple, with an eigenfunction $\varphi_1>0$. For $\gamma\in D_b$, the function
$\gamma\cdot\varphi_1$ is again a positive eigenfunction of $\lambda_1$ with the same $L^2$-norm; hence,
$\gamma\cdot\varphi_1=\varphi_1$.

(iii) The nodal domains of $f$ are relatively open in $\overline U$ and disjoint from $\Lambda$, and $f$ vanishes on their
boundaries relative to $\overline U$. As $f$ is orthogonal to $\varphi_1$, it changes sign; thus, there are at least two
nodal domains.

We claim that there are at most two. Suppose, to the contrary, that there are at least three, and let
$\mathcal O_1,\mathcal O_2,\mathcal O_3$ be three of them. Let $f_k=f\mathbf 1_{\mathcal O_k}$ (cf.\ \cite[Ch.~VI, \S6]{CH53}).
The truncation argument in the proof of Lemma~\ref{lem:pieces} shows $f_k\in H^1_\Lambda(U)$. For $k\ne k'$, we have
$f_kf_{k'}=0$ on $\overline U$ and $\langle\nabla f_k,\nabla f_{k'}\rangle=0$ almost everywhere; hence,
$S_U(f_k,f_{k'})=0$. Similarly, $S_U(f-f_k,f_k)=0$, and thus
\[
  S_U(f_k,f_k)=S_U(f,f_k)=\lambda_2\int_Uf_k^2\dA .
\]
Choose $(c_1,c_2)\ne0$ such that $w=c_1f_1+c_2f_2$ is $L^2$-orthogonal to $\varphi_1$. Then
$S_U(w,w)=\lambda_2\int_Uw^2$. Consequently, $w$ minimizes $S_U(v,v)/\int_Uv^2$ among the functions
$v\in H^1_\Lambda(U)\setminus\{0\}$ orthogonal to $\varphi_1$ and, therefore, is an eigenfunction of $\lambda_2$. However,
$w\ne0$ vanishes on the open set $\mathcal O_3\cap U$, which contradicts unique continuation \cite{Aro57}. This proves the
claim.

Therefore, there are exactly two nodal domains, one on which $f>0$ and one on which $f<0$. Their intersections with $U$
are connected. Indeed, a connected relatively open subset of the closed disc $\overline U$ remains connected when its
points on $\partial\overline U$ are removed, since these points have arbitrarily small neighborhoods whose intersections
with $U$ are connected. As $\Delta f+(|A|^2+\lambda_2)f=0$, Lemma~\ref{lem:jordan} gives $f(N)\ne0$ or $df_N\ne0$.
\end{proof}

Using the representation theory at $N$, we can now control the second odd eigenvalue.

\begin{proposition}\label{prop:odd}
Let $\Sig\in\cS_b$, and suppose that $\ind_{A_1^-}(S)\le1$ and $\ind_{E_1^-}(S)=0$. Then $\lambda_1<0\le\lambda_2$.
Consequently, the $\sigma_h$-odd part of the negative space of $S$ is spanned by the odd extension of $\varphi_1$.
\end{proposition}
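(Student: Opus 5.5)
The plan is to run the argument entirely through the restricted form $S_U$ on $H^1_\Lambda(U)$, using Lemma~\ref{lem:SU}. Its eigenvalues are exactly the eigenvalues of $S$ on $\sigma_h$-odd functions, with odd extension preserving types via $\pi\mapsto\pi^-$.

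\emph{Step 1: $\lambda_1<0$.} The function $\nu_{e_3}$ is $\sigma_h$-odd, nonzero, and satisfies $S(\nu_{e_3},\nu_{e_3})<0$ by Lemma~\ref{lem:basic}(iv). Hence $S_U(\nu_{e_3}|_U,\nu_{e_3}|_U)<0$, and the variational characterization gives $\lambda_1<0$. By Lemma~\ref{lem:SU}(ii), the odd extension of $\varphi_1$ is then a negative eigenfunction of $S$ of type $A_1^-$. Since $\ind_{A_1^-}(S)\le1$, it accounts for the whole $A_1^-$-isotypic part of the negative space.

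\emph{Step 2: $\lambda_2\ge0$, by contradiction.} Suppose $\lambda_2<0$, and let $\mathcal E$ be the $\lambda_2$-eigenspace of $S_U$. It is finite-dimensional, $D_b$-invariant (as $D_b$ preserves $U$, $\Lambda$, $S_U$), and $L^2$-orthogonal to $\varphi_1$ because $\lambda_1$ is simple. By Lemma~\ref{lem:SU}(i), its odd extension is a subspace of the negative space of $S$ of type $\mathcal E^-$, orthogonal to the extension of $\varphi_1$. The hypotheses $\ind_{A_1^-}(S)\le1$ (already used by $\varphi_1$) and $\ind_{E_1^-}(S)=0$ therefore force $m_{A_1}(\mathcal E)=m_{E_1}(\mathcal E)=0$. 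So some nonzero isotypic part $W\subset\mathcal E$ has type $\pi\notin\{A_1,E_1\}$.

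\emph{Step 3: representation theory at $N$ (the key step).} By Lemma~\ref{lem:star}(iii), $N$ is fixed by $D_b$. Consider the linear maps
\[
  \mathrm{ev}:W\to\R,\quad f\mapsto f(N),\qquad d:W\to T_N^*\Sig,\quad f\mapsto df_N .
\]
Since $(\gamma\cdot f)(N)=f(\gamma^{-1}N)=f(N)$ and $d(\gamma\cdot f)_N=df_N\circ d(\gamma^{-1})_N$, both maps are $D_b$-equivariant. The target of $\mathrm{ev}$ is the trivial representation $A_1$. The target of $d$ is the dual of $T_N\Sig\cong E_1$, which is again $E_1$ because the action is orthogonal. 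By Schur's lemma, both maps vanish on $W$, which has type $\pi\ne A_1,E_1$. Thus every $f\in W\setminus\{0\}$ satisfies $f(N)=0$ and $df_N=0$. This contradicts Lemma~\ref{lem:SU}(iii), since such an $f$ is an eigenfunction of $\lambda_2$. Hence $\lambda_2\ge0$.

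\emph{Step 4: conclusion.} With $\lambda_1<0\le\lambda_2$, the only negative eigenvalue of $S_U$ is $\lambda_1$, and it is simple. Lemma~\ref{lem:SU}(i) then identifies the $\sigma_h$-odd part of the negative space of $S$ with the span of the odd extension of $\varphi_1$.

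The main obstacle is making sure Step 3 is airtight. This requires checking that the nodal-domain input of Lemma~\ref{lem:SU}(iii) applies to every nonzero element of $W$, which holds because each is a $\lambda_2$-eigenfunction. It also requires correctly identifying the cotangent representation at $N$ with $E_1$, using $T_N\Sig=\ell^\perp$ from Lemma~\ref{lem:star}(iii).
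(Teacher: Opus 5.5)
Your proposal is correct and follows the paper's proof essentially step for step. You obtain $\lambda_1<0$ from the $\sigma_h$-odd negative direction $\nu_{e_3}$, which is exactly the $A_1^-$ part of Proposition~\ref{prop:lower} that the paper cites; you then exclude the types $A_1$ and $E_1$ from the $\lambda_2$-eigenspace using the hypotheses, and use Schur's lemma to kill $f(N)$ and $df_N$, contradicting Lemma~\ref{lem:SU}(iii). Restricting to a single isotypic part $W$ is harmless; the paper simply applies Schur's lemma to all of $\mathcal E$.
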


\begin{proof}
By Lemma~\ref{lem:SU}(i), the $\sigma_h$-odd part of the negative space of $S$ corresponds to the span of the
eigenfunctions of $S_U$ with negative eigenvalues. By Proposition~\ref{prop:lower}, it is non-zero; thus, $\lambda_1<0$.

We prove $\lambda_2\ge0$ by contradiction. Suppose that $\lambda_2<0$, and let $\mathcal E$ be its eigenspace. It is
$D_b$-invariant and orthogonal to $\varphi_1$. Moreover, $\mathcal E$ contains no subrepresentation of type $A_1$;
otherwise, together with the odd extension of $\varphi_1$, the negative space of $S$ would contain two copies of $A_1^-$.
It contains none of type $E_1$ either, since $\ind_{E_1^-}(S)=0$. On the other hand, the maps $f\mapsto f(N)\in\R$ and
$f\mapsto df_N\in T^*_N\Sig$ are $D_b$-equivariant, where $D_b$ acts trivially on $\R$ and, by Lemma~\ref{lem:star}(iii),
through $E_1$ on $T^*_N\Sig$. By Schur's lemma, both maps vanish on $\mathcal E$, which contradicts
Lemma~\ref{lem:SU}(iii). Hence, $\lambda_2\ge0$.
\end{proof}

We are now ready to compute the index.

\begin{proposition}\label{prop:index}
Let $\Sig\in\cS_b$. Then the negative space of $S$ is isomorphic to $(\cR_b\ominus A_2)^+\oplus A_1^-$ as a
representation of $G$. In particular, $\Ind(\Sig)=2b$.
\end{proposition}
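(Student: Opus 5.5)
The plan is to combine the lower bound of Proposition~\ref{prop:lower} with upper bounds type by type, obtained from Proposition~\ref{prop:EM} (applied with $H=G$), Lemma~\ref{lem:energy}, and Proposition~\ref{prop:teich}. The odd sector is finished with Proposition~\ref{prop:odd}. Since $\sigma_1(\Sig)=1$ by \cite[Thm.~1.1]{Seo24}, Lemma~\ref{lem:energy} gives $\ind_\pi(Q)\le m_\pi(\R^3)$ for every irreducible $\pi$ of $G$, where $\R^3\cong E_1^+\oplus A_1^-$. Thus $\ind_\pi(Q)$ is at most $1$ for $\pi\in\{E_1^+,A_1^-\}$ and vanishes otherwise.

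\emph{Even sector.} For $\pi$ irreducible of $D_b$, Proposition~\ref{prop:EM} and Proposition~\ref{prop:teich} give
\[
  \ind_{\pi^+}(S)\le m_{\pi^+}(\R^3)+\dim\pi-[\pi=A_2]-[\pi=E_1]=\dim\pi-[\pi=A_2],
\]
because $m_{\pi^+}(\R^3)=[\pi=E_1]$. This matches the lower bound of Proposition~\ref{prop:lower} exactly. Hence $\ind_{\pi^+}(S)=\dim\pi-[\pi=A_2]$ for every $\pi$, and the even part of the negative space is isomorphic to $(\cR_b\ominus A_2)^+$.

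\emph{Odd sector.} For $A_1^-$ the same bound gives $\ind_{A_1^-}(S)\le 1+0=1$, and for $E_1^-$ it gives $\ind_{E_1^-}(S)\le 0+0=0$. These are precisely the hypotheses of Proposition~\ref{prop:odd}. That proposition then shows that the whole $\sigma_h$-odd negative space is spanned by the odd extension of $\varphi_1$. This function is $D_b$-invariant by Lemma~\ref{lem:SU}(ii), so by Lemma~\ref{lem:SU}(i) it has type $A_1^-$. So the odd part is exactly $A_1^-$, in agreement with the lower bound $\ind_{A_1^-}(S)\ge1$. The remaining odd types, $A_2^-$, $B_j^-$ and $E_k^-$ for $k\ge2$, are not controlled by Proposition~\ref{prop:EM} directly; they are disposed of by Proposition~\ref{prop:odd}, since the odd negative space is one-dimensional.

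Adding the two sectors, the negative space is $(\cR_b\ominus A_2)^+\oplus A_1^-$, of dimension $(2b-1)+1=2b$. Given the earlier results, I expect no serious obstacle; the assembly is bookkeeping. The one point to check carefully is that Lemma~\ref{lem:energy} and Proposition~\ref{prop:EM} are applied to irreducible representations of the full group $G=\Dbh$, and not only of $D_b$. The multiplicities $m_{\pi^\pm}(\R^3)$ must be read off from $\R^3\cong E_1^+\oplus A_1^-$, and the multiplicities $m_{\pi^\pm}(\cT_\Sig)$ from Proposition~\ref{prop:teich}, which is already stated in terms of $\pi^\pm$.
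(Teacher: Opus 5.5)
Your proposal is correct and follows the paper's proof essentially step for step. You combine Proposition~\ref{prop:EM} with $H=G$, Lemma~\ref{lem:energy} via $\sigma_1=1$, and Proposition~\ref{prop:teich} to obtain the type-by-type upper bounds, match them with Proposition~\ref{prop:lower}, and close the remaining odd types with Proposition~\ref{prop:odd}. Identifying the type $A_1^-$ through the $D_b$-invariance of $\varphi_1$ works equally well as the paper's appeal to one-dimensionality together with $\ind_{A_1^-}(S)\ge1$.
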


\begin{proof}
By \cite[Thm.~1.1]{Seo24} (see also \cite[Thm.~1.1]{CG26}), $\sigma_1(\Sig)=1$. By Proposition~\ref{prop:EM} with $H=G$ and Lemma~\ref{lem:energy}, for
every irreducible representation $\pi$ of $G$,
\[
  \ind_\pi(S)\le m_\pi(\R^3)+m_\pi(\cT_\Sig).
\]
Since $\R^3\cong E_1^+\oplus A_1^-$, we have $m_{\pi^+}(\R^3)=[\pi=E_1]$, $m_{A_1^-}(\R^3)=1$, and $m_{E_1^-}(\R^3)=0$.
Together with Proposition~\ref{prop:teich}, it gives
\[
  \ind_{\pi^+}(S)\le\dim\pi-[\pi=A_2]\quad\text{for every $\pi$},\qquad \ind_{A_1^-}(S)\le1,\qquad \ind_{E_1^-}(S)=0 .
\]
Combining with Proposition~\ref{prop:lower}, we obtain $\ind_{\pi^+}(S)=\dim\pi-[\pi=A_2]$ for every $\pi$ and
$\ind_{A_1^-}(S)=1$. Furthermore, by Proposition~\ref{prop:odd}, the $\sigma_h$-odd part of the negative space is
one-dimensional and, hence, of type $A_1^-$. Therefore, the negative space of $S$ is isomorphic to
$(\cR_b\ominus A_2)^+\oplus A_1^-$, and $\Ind(\Sig)=(2b-1)+1=2b$.
\end{proof}

\subsection{The nullity}\label{ssec:null}
Finally, we determine the kernel of $S$. We first identify the Steklov eigenfunctions with eigenvalue one; the following
lemma is a special case of results of Karpukhin, Kusner, McGrath, and Stern \cite{KKMS24}.

\begin{lemma}\label{lem:steklov}
	Let $\Sig\in\cS_b$. Then the Steklov eigenfunctions of $\Sig$ with eigenvalue one (the solutions of $\Delta u=0$ in
	$\Sig$ with $\partial_\eta u=u$ on $\bdSig$) are the linear combinations of $X_1$, $X_2$, and $X_3$.
\end{lemma}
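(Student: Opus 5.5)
The plan is to show that the space $E$ of Steklov eigenfunctions with eigenvalue one is exactly $\Span\{X_1,X_2,X_3\}$, by combining a nodal-domain argument with the symmetry decomposition. Since $\sigma_1(\Sig)=1$ by \cite{Seo24}, $E$ is the first nonzero Steklov eigenspace. It is $G$-invariant and contains a copy of $\R^3\cong E_1^+\oplus A_1^-$. It therefore suffices to show that $m_\pi(E)=m_\pi(\R^3)$ for every irreducible $\pi$ of $G$. Each isotypic part of $E$ is a sum of copies of $\pi$, so an extra eigenfunction $u\in E$ not in $\Span\{X_i\}$ can be taken of a single type $\pi$.

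The first tool is Courant's theorem for first Steklov eigenfunctions. Any $u\in E\setminus\{0\}$ has exactly two nodal domains, by the same truncation argument as in Lemma~\ref{lem:SU}(iii): three nodal domains would produce a combination orthogonal to constants attaining $\sigma_1$ and vanishing on an open set. Because $u$ is harmonic, unique continuation applies.

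\emph{$\sigma_h$-odd types.} If $u$ is $\sigma_h$-odd, it vanishes on $\Lambda$. Two nodal domains then force $u$ to have a sign on $U$. If a second odd eigenfunction $u'$ of type $A_1^-$ existed, a suitable combination $u-cX_3$ would vanish at an interior point of $U$ while remaining of one sign on $U$. Together with $\partial_\eta$ information, this contradicts the strong maximum principle or Hopf's lemma for harmonic functions. In fact any odd $u\in E$ must be a multiple of $X_3$, since two positive functions on $U$ cannot be orthogonal in the boundary $L^2$ pairing after odd extension. This gives $m_{\pi^-}(E)=[\pi=A_1]$.

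\emph{$\sigma_h$-even types.} For an even $u$ of type $\pi^+$, I would use the point $N$ and the Jordan-curve argument of Lemma~\ref{lem:jordan}, applied now on all of $\Sig$ or on $\overline U$. Since $u$ has two nodal domains on $\Sig$ and is $\sigma_h$-even, the restriction to $\overline U$ should still have connected positive and negative sets. Lemma~\ref{lem:jordan} then gives $u(N)\ne0$ or $du_N\ne0$. By Schur's lemma and Lemma~\ref{lem:star}(iii), this forces the type to be $A_1^+$ or $E_1^+$. Type $A_1^+$ is excluded: $u$ would be $D_b$-invariant with $u(N)\ne0$, and two nodal domains together with boundary mean zero and the $D_b$-symmetry (every wedge image of a nodal domain is a nodal domain) produce too many domains unless the nodal set is a single $D_b$-invariant curve separating $N$ from $\bdSig$. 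The latter contradicts $\int_{\bdSig}u=0$, since then $u$ has one sign on all of $\bdSig$. For type $E_1^+$, the map $u\mapsto du_N\in T_N^*\Sig$ is injective on the $E_1^+$-isotypic part, again by Lemma~\ref{lem:jordan} applied to differences. Hence that part has multiplicity at most one, and it is already realized by $X_1,X_2$.

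I expect the main obstacle to be the passage from two nodal domains on $\Sig$ to connectedness of $U_\pm$ for even functions. It also requires care to verify that the nodal-domain counting respects the symmetry. The approach I would try first is the analogue of Lemma~\ref{lem:SU}(iii), run directly for the Steklov problem on $\Sig$ and then intersected with $U$, using $\sigma_h$-evenness.
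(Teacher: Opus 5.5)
Your route differs from the paper's. The paper notes that $(\Sig,\sigma_h)$ is a basic reflection surface (Lemma~\ref{lem:star}(ii)) and quotes the bound $\dim\le3$ for the first Steklov eigenspace from \cite[Props.~5.12, 5.15]{KKMS24}. Most of your self-contained replacement goes through:
\begin{itemize}
\item The Courant argument for $\sigma_1=1$ works.
\item The odd sector works. Once every odd $u\in E$ has a sign on $U$, either the strong maximum principle at an interior zero of $u-cX_3$ or orthogonality to $X_3$ in $L^2(\bdSig)$ finishes it.
\item The even-sector reduction to types $A_1^+$ and $E_1^+$ via Lemma~\ref{lem:jordan} and Schur's lemma works, and so does the multiplicity-one bound for $E_1^+$.
\end{itemize}
The step you flag as the main obstacle is actually easy. $\{u>0\}$ is connected and $\sigma_h$-invariant, so its image under the folding map $\Sig\to\overline U$, namely $\{u>0\}\cap\overline U$, is connected. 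Deleting its points on $\partial\overline U$ keeps it connected, exactly as in Lemma~\ref{lem:SU}(iii).

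The genuine gap is the exclusion of $A_1^+$. If $u\in E$ has type $A_1^+$, then $u(N)\ne0$, say $u(N)>0$. Its two nodal domains are then each $G$-invariant, so the observation that $G$ permutes nodal domains produces no extra domains. Nothing in your sketch shows that the nodal set must separate $N$ from $\bdSig$; whether it is a single curve is beside the point. The missing ingredient is a topological winding argument:
\begin{itemize}
\item $U_-$ is a connected, $\Rot$-invariant open subset of the disc $U$ not containing $N$. Also, $\Rot$ acts on $U$ as a conformal rotation of order $b$ about $N$, with angle $\pm2\pi/b$ in a Riemann map centered at $N$.
\item Choose $y\in U_-$ and a path $\alpha\subset U_-$ from $y$ to $\Rot y$. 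The loop $\alpha\cdot\Rot\alpha\cdots\Rot^{b-1}\alpha\subset U_-$ has winding number $\pm1+bm$ about $N$ for some integer $m$, so it is non-zero.
\item Hence $N$ and $\partial\overline U$ lie in different components of the complement of this loop. The connected set $\{u>0\}\cap\overline U$ contains $N$ and misses $U_-$, so it cannot meet $\partial\overline U$.
\item Thus $u\le0$ on $\bdSig\cap\overline U$, and by $\sigma_h$-evenness on all of $\bdSig$. Since $\int_{\bdSig}u=\int_{\bdSig}\partial_\eta u=0$, $u$ vanishes on $\bdSig$. Then $\int_\Sig|\nabla u|^2=\int_{\bdSig}u\,\partial_\eta u=0$ and $u\equiv0$, a contradiction.
\end{itemize}
With this step supplied, your argument is a complete proof that avoids \cite{KKMS24}. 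The cost is that it uses the full $D_b$-symmetry and the fixed point $N$, whereas the cited bound needs only the reflection $\sigma_h$ with disc quotient.
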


\begin{proof}
By Lemma~\ref{lem:star}(ii), $(\Sig,\sigma_h)$ is a basic reflection surface in the sense of \cite[Def.~5.4]{KKMS24},
since $\Sig/\sigma_h$ is a disc whose boundary is the image of $\bdSig\cup\Lambda$; so the first Steklov eigenspace of $\Sig$ has dimension at most three
\cite[Props.~5.12, 5.15]{KKMS24}. Since $\sigma_1(\Sig)=1$ \cite[Thm.~1.1]{Seo24} (see also \cite{CG26}), it is the space in question, and it
contains $X_1,X_2,X_3$, which are linearly independent as $\Sig$ is not planar.
\end{proof}

\begin{proposition}\label{prop:null}
Let $\Sig\in\cS_b$. Then $\ker S$ is spanned by $R_{e_1}$, $R_{e_2}$, and $R_{e_3}$. In particular, $\Null(\Sig)=3$.
\end{proposition}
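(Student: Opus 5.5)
The plan is to apply the non-strict inequality of Proposition~\ref{prop:EM} with $H=G$, type by type, and feed in the exact index computed in Proposition~\ref{prop:index}. By Lemma~\ref{lem:energy}, for every irreducible $\pi$ of $G$,
\[
  \ind_\pi(S)+\operatorname{null}_\pi(S)\le m_\pi(\R^3)+m_\pi(\cV_1)+m_\pi(\cT_\Sig).
\]
By Lemma~\ref{lem:steklov}, every $V\in\cV_1$ has the form $V=MX$ for a constant $3\times3$ matrix $M$, subject to $\langle MX,X\rangle=0$ on $\bdSig$. The skew part of $M$ always satisfies this condition and yields $V=a\times X$. As a $G$-module, this skew part is $\R^3\otimes\det\cong E_1^-\oplus A_2^+$, exactly the type of $\Span\{R_{e_1},R_{e_2},R_{e_3}\}$ by Lemma~\ref{lem:sym}(iii).

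\emph{Step 1: $\cV_1$ is exactly $\{a\times X\}$.} Let $M=M_{\mathrm{sk}}+M_{\mathrm{sym}}$. The condition $\langle MX,X\rangle=0$ on $\bdSig$ only involves $M_{\mathrm{sym}}$, and the space $\mathcal M$ of admissible symmetric $M$ is $G$-invariant. Since $|X|=1$ on $\bdSig$, the identity contributes nothing, so it suffices to work in the trace-free part. Under $G$, the trace-free symmetric matrices decompose as an $A_1^+$ spanned by $\mathrm{diag}(1,1,-2)$, an $E_1^-$ given by $x_1x_3,x_2x_3$, and a $\sigma_h$-even two-dimensional summand given by $x_1^2-x_2^2,x_1x_2$. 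The latter is $E_2^+$ for $b\ge5$, $B_1^+\oplus B_2^+$ for $b=4$, and $E_1^+$ for $b=3$. By Schur's lemma, it suffices to exclude a nonzero invariant subspace of $\mathcal M$ of each type, which I plan to do as follows.
\begin{itemize}[leftmargin=*,itemsep=2pt]
\item For $A_1^+$, the condition forces $x_1^2+x_2^2=2x_3^2$ on $\bdSig\subset\Sph$, hence $x_3^2\equiv1/3$ on each component. This contradicts the fact that each boundary component meets the equator, shown in the proof of Lemma~\ref{lem:star}(i).
\item For $E_1^-$, the condition gives $\langle c,X\rangle x_3=0$ on $\bdSig$. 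Since $x_3$ vanishes at only two points of each component, we get $\langle c,X\rangle=0$ on all of $\bdSig$. Then all $b$ components lie in one vertical plane, which is impossible since $\Rot$ permutes them cyclically.
\item For the horizontal quadratic type, the zero set of $\operatorname{Re}(\alpha(x_1+\mathrm ix_2)^2)$ on $\Sph$ is two vertical great circles. A boundary component $\Gamma_i$ would then lie in a vertical plane through $\ell$ and be a great circle through the poles. All $\Gamma_j$ would then meet at the poles, which is impossible.
\end{itemize}
Hence $\cV_1\cong E_1^-\oplus A_2^+$.

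\emph{Step 2: the even sector and $A_1^-$.} For $\pi^+$, combine $\ind_{\pi^+}(S)=\dim\pi-[\pi=A_2]$, the bound $m_{\pi^+}(\R^3)=[\pi=E_1]$, and Proposition~\ref{prop:teich}. This gives $\operatorname{null}_{\pi^+}(S)\le m_{\pi^+}(\cV_1)=[\pi=A_2]$. For $A_1^-$, we have $\ind=1=m(\R^3)$ and $m_{A_1^-}(\cT_\Sig)=0$, so $\operatorname{null}_{A_1^-}\le m_{A_1^-}(\cV_1)=0$. For $E_1^-$, similarly $\operatorname{null}_{E_1^-}\le1$.

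\emph{Step 3: the remaining odd types.} Here Proposition~\ref{prop:teich} is not sharp, since $m_{\pi^-}(\cT_\Sig)$ may be $m_\pi(\cP_b)\ne0$ for $\pi=E_k$ with $k\ge2$ or $\pi=B_1$. Instead I use the upper half. An odd $f\in\ker S$ restricts to an eigenfunction of $S_U$ with eigenvalue $0$, and $\lambda_1<0\le\lambda_2$ by Proposition~\ref{prop:odd}. If $\lambda_2>0$, there are no odd kernel elements. If $\lambda_2=0$, Lemma~\ref{lem:SU}(iii) says every nonzero element of that eigenspace has $f(N)\ne0$ or $df_N\ne0$. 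Schur's lemma, applied as in the proof of Proposition~\ref{prop:odd}, then forces the odd kernel to consist only of types $A_1^-$ and $E_1^-$.

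\emph{Conclusion and main obstacle.} Therefore $\ker S\cong A_2^+\oplus E_1^-$ at most, which has dimension at most $3$. It contains the three linearly independent functions $R_{e_1},R_{e_2},R_{e_3}$; they are independent because $\Sig$ is not rotationally symmetric about any axis, as $\Rot$ permutes the $b\ge3$ boundary components. So equality holds and $\Null(\Sig)=3$. I expect Step~1, the exclusion of symmetric $M$ with $\langle MX,X\rangle=0$ on $\bdSig$, to be the main obstacle. It requires careful geometric arguments about the boundary curves, particularly in the low cases $b=3,4$, where the quadratic types mix with $E_1^+$ or with $B_1^+,B_2^+$.
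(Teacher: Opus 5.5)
Your Steps~2 and~3 and the conclusion are the paper's argument. The non-strict inequality of Proposition~\ref{prop:EM}, combined with the exact index, settles the even types and $A_1^-$, $E_1^-$. The upper-half argument (Lemma~\ref{lem:SU}(iii) and Schur's lemma at $N$) then removes the remaining odd types, where, as you note, Proposition~\ref{prop:teich} is not sharp. The difference is Step~1. The paper uses no symmetry there: the zero set on $\Sph$ of a non-zero quadratic form is empty, two antipodal points, one or two great circles, or two disjoint closed curves. Hence it contains at most two pairwise disjoint Jordan curves, and $b\ge3$ gives $M^s=0$ at once, uniformly in $b$. The difficulty you anticipate for $b=3,4$ never appears.

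Your isotypic route also works, but the reduction to trace-free matrices is wrong as stated. Since $\langle IX,X\rangle=1$ on $\bdSig$, for $M^s=cI+M_0$ the condition is $\langle M_0X,X\rangle=-c$, not $\langle M_0X,X\rangle=0$. The $A_1^+$-isotypic part of the symmetric matrices is the two-dimensional space of matrices $\mathrm{diag}(\alpha,\alpha,\beta)$. So you must exclude $\alpha+(\beta-\alpha)x_3^2=0$ on $\bdSig$ for every $(\alpha,\beta)\ne0$, not just $\mathrm{diag}(1,1,-2)$. This is immediate with your own tools: a point of $\bdSig$ on the equator gives $\alpha=0$, and then $\beta=0$ because $x_3$ vanishes at only two points of each component.

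The other isotypic parts contain no multiple of $I$, so your $E_1^-$ and horizontal cases stand. In the horizontal case, a component inside two vertical great circles need not be a whole great circle, but every Jordan curve in that union passes through both poles, which is all you use.

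Finally, ``$\Rot$ permutes the components'' only rules out rotational symmetry about $\ell$. For the independence of $R_{e_1},R_{e_2},R_{e_3}$ you also need, as in the paper, that $\ker(a\mapsto R_a)$ is $\Rot$-invariant. Then a symmetry axis $a\notin\ell$ would come with the second axis $\Rot a$ and force $SO(3)$-invariance, which is absurd.
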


\begin{proof} We divide the argument into several steps. 
	
\emph{Step 1: the space $\cV_1$.} By Lemma~\ref{lem:steklov}, every $V\in\cV_1$ is of the form $V=MX$ for a real
$3\times3$ matrix $M$. The condition $\langle V,X\rangle=0$ on $\bdSig$ says that the symmetric part $M^s$ of $M$ satisfies
$\langle M^sx,x\rangle=0$ for $x\in\bdSig$. The zero set on $\Sph$ of a non-zero quadratic form is empty, a pair of
antipodal points, a great circle, the union of two great circles, or, for an indefinite form of rank three, the union of
two disjoint closed curves. In each case, it contains at most two pairwise disjoint Jordan curves. As $\bdSig$ consists
of $b\ge3$ disjoint Jordan curves, $M^s=0$. Hence, $\cV_1=\{a\times X:a\in\R^3\}$. Since $a\mapsto a\times X$ is injective
and $\gamma\cdot(a\times X)=\det(\gamma)\,\gamma a\times X$, we obtain
\[
  \cV_1\cong\R^3\otimes\det\cong E_1^-\oplus A_2^+\qquad\text{as $G$-modules.}
\]

\emph{Step 2: the Jacobi fields $R_a$.} The linear map $a\mapsto R_a$ is injective. Indeed, its kernel is a
$\Rot$-invariant subspace consisting of vectors which span axes of rotational symmetry of $\Sig$. It does not contain $e_3$,
as $\Rot$ permutes the boundary components. If it contained a vector $a\notin\ell$, it would contain the independent vectors
$a$ and $\Rot a$; then $\Sig$ would be invariant under the rotations about two different axes and, hence, under $SO(3)$,
which is absurd. By Lemma~\ref{lem:basic}(iii) and Lemma~\ref{lem:sym}(iii), $R_{e_3}\in\ker S$ has type $A_2^+$, and
$R_{e_1},R_{e_2}\in\ker S$ span a copy of $E_1^-$.

\emph{Step 3: the upper bound.} By Proposition~\ref{prop:EM} with $H=G$, Lemma~\ref{lem:energy}, and
Proposition~\ref{prop:teich}, for every irreducible representation $\pi$ of $G$,
\[
  \ind_\pi(S)+\operatorname{null}_\pi(S)\le m_\pi(\R^3)+m_\pi(\cV_1)+m_\pi(\cT_\Sig).
\]
For $\pi^+$, the right-hand side is at most
\[
  [\pi=E_1]+[\pi=A_2]+\dim\pi-[\pi=A_2]-[\pi=E_1]=\dim\pi .
\]
Since $\ind_{\pi^+}(S)=\dim\pi-[\pi=A_2]$ by Proposition~\ref{prop:index}, it follows that
$\operatorname{null}_{\pi^+}(S)\le[\pi=A_2]$. For $A_1^-$, the right-hand side is $1=\ind_{A_1^-}(S)$; thus,
$\operatorname{null}_{A_1^-}(S)=0$. For $E_1^-$, it is $1$; thus, $\operatorname{null}_{E_1^-}(S)\le1$.

It remains to exclude the other $\sigma_h$-odd types. By Lemma~\ref{lem:SU}(i), the $\sigma_h$-odd part of $\ker S$
corresponds to the eigenspace of $S_U$ for the eigenvalue $0$. By Propositions~\ref{prop:index} and \ref{prop:odd},
$\lambda_1<0\le\lambda_2$. Moreover, the restriction of the $\sigma_h$-odd Jacobi field $R_{e_1}$, which is non-zero by Step~2, has
eigenvalue $0$.
Hence, $\lambda_2=0$, and, by Lemma~\ref{lem:SU}(iii), the map $f\mapsto(f(N),df_N)$ is injective on the $\sigma_h$-odd
part of $\ker S$. Since this map is $D_b$-equivariant with values in a representation isomorphic to $A_1\oplus E_1$, only
the types $A_1^-$ and $E_1^-$ can occur in the $\sigma_h$-odd part of $\ker S$.

Consequently, $\dim\ker S\le3$. Together with Step~2, we conclude that
\[
  \ker S=\Span\{R_{e_1},R_{e_2},R_{e_3}\}.\qedhere
\]
\end{proof}

\begin{proof}[Proof of Theorem~\ref{thm:main}]
It follows from Propositions~\ref{prop:index} and \ref{prop:null}.
\end{proof}

\end{document}